\tikzset{
    -Latex,auto,node distance =1 cm and 1 cm,semithick ,
    state/.style ={ellipse, draw, minimum width = 0.7 cm},
    point/.style = {circle, draw, inner sep=0.05cm,fill,node contents={}},
    bidirected/.style={Latex-Latex,dashed},
    el/.style = {inner sep=2pt, align=left, sloped}
}
\newcommand{\xleftrightarrow}[2][]{\ext@arrow 3359\leftrightarrowfill@{#1}{#2}}
\newcommand{\xdashrightarrow}[2][]{\ext@arrow 0359\rightarrowfill@@{#1}{#2}}
\newcommand{\xdashleftarrow}[2][]{\ext@arrow 3095\leftarrowfill@@{#1}{#2}}
\newcommand{\xdashleftrightarrow}[2][]{\ext@arrow 3359\leftrightarrowfill@@{#1}{#2}}
\def\rightarrowfill@@{\arrowfill@@\relax\relbar\rightarrow}
\def\leftarrowfill@@{\arrowfill@@\leftarrow\relbar\relax}
\def\leftrightarrowfill@@{\arrowfill@@\leftarrow\relbar\rightarrow}
\def\arrowfill@@#1#2#3#4{%
  $\m@th\thickmuskip0mu\medmuskip\thickmuskip\thinmuskip\thickmuskip
   \relax#4#1
   \xleaders\hbox{$#4#2$}\hfill
   #3$%
}
\newcommand{\nc}{\newcommand}
\numberwithin{equation}{section}
\newenvironment{red}{\relax\color{red}}{\relax}
\newenvironment{blue}{\relax\color{blue}}{\hspace*{.5ex}\relax}
\newenvironment{jaune}{\relax\color{Orchid}}{\hspace*{.5ex}\relax}
\newcommand{\bj}{\begin{jaune}}
\newcommand{\ej}{\end{jaune}}
\newcommand{\ber}{\begin{red}}
\newcommand{\er}{\end{red}}
\newcommand{\bebl}{\begin{blue}}
\newcommand{\ebl}{\end{blue}}
\theoremstyle{plain}
\newtheorem{lemma}{Lemma}[section]
\newtheorem{prop}[lemma]{Proposition}
\newtheorem{theorem}[lemma]{Theorem}
\newcommand{\Prop}{\begin{prop}}
\newcommand{\enprop}{\end{prop}}
\newcommand{\Lemma}{\begin{lemma}}
\newcommand{\enlemma}{\end{lemma}}
\newcommand{\Th}{\begin{theorem}}
\newcommand{\enth}{\end{theorem}}
\newtheorem{corollary}[lemma]{Corollary}
\newcommand{\Cor}{\begin{corollary}}
\newcommand{\encor}{\end{corollary}}
\newtheorem{definition}[lemma]{Definition}
\newtheorem{conjecture}[lemma]{Conjecture}
\newcommand{\Def}{\begin{definition}}
\newcommand{\edf}{\end{definition}}
\newtheorem{sublemma}[lemma]{Sublemma}
\newcommand{\Sublemma}{\begin{sublemma}}
\newcommand{\ensub}{\end{sublemma}}
\theoremstyle{definition}
\newtheorem{remark}[lemma]{Remark}
\newtheorem{example}[lemma]{Example}
\newtheorem{Convention}[lemma]{Convention}
\newcommand{\Conv}{\begin{Convention}}
\newcommand{\enconv}{\end{Convention}}
\nc{\Con}{\begin{conjecture}}
\nc{\encon}{\end{conjecture}}
\nc{\Rem}{\begin{remark}}
\nc{\enrem}{\end{remark}}
\newcommand{\C}{{\mathbb C}}
\newcommand{\Z}{{\mathbb Z}}
\newcommand{\B}{{\mathbf{B}}}
\newcommand{\seteq}{\mathbin{:=}}
\newcommand{\g}{{\mathfrak{g}}}
\newcommand{\Hom}{\operatorname{Hom}}
\newcommand{\End}{\operatorname{End}}
\newcommand{\isoto}[1][]{\mathop{\xrightarrow%
[{\raisebox{.3ex}[0ex][.3ex]{$\scriptstyle{#1}$}}]%
{{\raisebox{-.6ex}[0ex][-.6ex]{$\mspace{2mu}\sim\mspace{2mu}$}}}}}
\newcommand{\tensor}{\otimes}
\newcommand{\eq}{\begin{eqnarray}}
\newcommand{\eneq}{\end{eqnarray}}
\newcommand{\eqn}{\begin{eqnarray*}}
\newcommand{\eneqn}{\end{eqnarray*}}
\newcommand{\on}{\operatorname}
\newcommand{\Ker}{\on{Ker}}
\newcommand{\QED}{\end{proof}}
\newcommand{\Proof}{\begin{proof}}
\newcommand{\soplus}{\mathop{\mbox{\normalsize$\bigoplus$}}\limits}
\newcommand{\ba}{\begin{array}}
\newcommand{\ea}{\end{array}}
\newcommand{\set}[2]{\left\{#1 \mid #2 \right\}}
\newcommand{\eqsub}{\begin{subequations}\begin{eqnarray}}
\newcommand{\eneqsub}{\end{eqnarray}\end{subequations}}
\newcommand{\ol}{\overline}
\nc{\la}{\lambda}
\nc{\lam}{\lambda}
\nc{\U}[1][\g]{U_q(#1)}
\nc{\te}{\tilde{e}}
\nc{\tei}{\tilde{e}_i}
\nc{\tf}{\tilde{f}}
\nc{\tfi}{\tilde{f}_i}
\nc{\tU}{\widetilde U_q(\g)}
\nc{\tE}{\tilde{E}}
\nc{\tF}{\widetilde{\F}}
\nc{\tK}{\widetilde{K}}
\nc{\tk}{\tilde{k}}
\nc{\tkone}{\tk_{\ol{1}}}
\nc{\teone}{\tilde{e}_{\ol{1}}}
\nc{\tfone}{\tilde{f}_{\ol{1}}}
\nc{\teibar}{\tilde{e}_{\ol{i}}} \nc{\tfibar}{\tilde{f}_{\ol{i}}}
\nc{\tki}{{\tk}_{\ol {i}}}
\nc{\BZ}{{\mathbb{Z}}}
\nc{\al}{\alpha}
\nc{\qs}{{q}}
\nc{\lan}{\langle}
\nc{\ran}{\rangle}
\nc{\re}{{\mathrm{re}}}
\nc{\wt}{\operatorname{wt}}
\nc{\ch}{\operatorname{char}}
\nc{\Um}[1][\g]{U^-_q(#1)}
\nc{\Ue}{U^+_q(\g)}
\nc{\eps}{\varepsilon}
\nc{\vphi}{\varphi}
\nc{\sphi}{\varphi^*}
\nc{\seps}{\varepsilon^*}
\nc{\nn}{\nonumber}
\nc{\vp}{\varpi}
\nc{\cls}{{\operatorname{cl}}}
\nc{\Wt}{{\operatorname{Wt}}}
\nc{\Us}{U'_q(\g)}
\nc{\La}{\Lambda}
\nc{\tLa}{\widetilde\Lambda}
\nc{\ro}{{\rm(}}
\nc{\rf}{{\rm)}}
\nc{\norm}{{\mathrm{norm}}}
\nc{\qbox}{\quad\mbox}
\nc{\braid}{{\mathfrak{B}}}
\nc{\Ad}{\operatorname{Ad}}
\nc{\Aut}{\operatorname{Aut}}
\nc{\dt}[1]{\tilde{\tilde #1}}
\nc{\Sn}{S^{{\mathrm{norm}}}}
\nc{\aff}{{\rm{aff}}}
\nc{\rk}{{\mathrm{rk}}}
\nc{\tP}{\widetilde{P}}
\nc{\tW}{\widetilde{W}}
\nc{\Dyn}{\mathrm{Dyn}}
\nc{\tD}{\widetilde{\Delta}}
\nc{\height}[1]{{\operatorname{ht}}(#1)}
\nc{\bl}{\bigl(}
\nc{\br}{\bigr)}
\nc{\Hecke}{\mathrm{H}}
\nc{\HA}{\Hecke^{\mathrm{A}}}
\nc{\HB}{\Hecke^{\mathrm{B}}}
\newcommand{\scbul}{{\,\raise1pt\hbox{$\scriptscriptstyle\bullet$}\,}}
\nc{\vac}{{\phi}}
\nc{\Bt}{\B_\theta(\g)}
\nc{\be}{\begin{enumerate}}
\nc{\ee}{\end{enumerate}}
\nc{\low}{{\mathrm{low}}}
\nc{\upper}{{\mathrm{up}}}
\nc{\Zodd}{\Z_{\mathrm{odd}}}
\nc{\Ft}[1][n]{\mathbb{P}\mathrm{ol}_{#1}}
\nc{\Ftf}[1][n]{\widetilde{\mathbb{P}\mathrm{ol}}_{#1}}
\nc{\KA}{\on{K}^{\mathrm{A}}}
\nc{\KB}{\on{K}^{\mathrm{B}}}
\nc{\Res}{\on{Res}}
\nc{\Fc}[1][{n,m}]{\mathbf{F}_{#1}}
\nc{\tphi}{\tilde{\varphi}}
\nc{\CO}{\mathscr{O}}
\nc{\inte}{\mathrm{int}}
\nc{\Oint}{\mathcal{O}^{\ge0}_{\inte}}
\nc{\vs}{\vspace*}
\nc{\tLt}{\widetilde{L}}
\nc{\tL}{\widetilde{\Lambda}}
\nc{\tu}{\tilde{u}}
\nc{\noi}{\noindent}
\nc{\heigh}{\mathfrak{t}}
\nc{\lowest}{\mathfrak{l}}
\nc{\rootl}{\mathsf{Q}}
\nc{\cl}{\colon}
\nc{\uqpg}{U'_q(\mathfrak g)}
\nc{\uq}{\uqpg}
\nc{\Oh}{\widehat{\mathcal{O}}}
\nc{\pn}{p_{\mathfrak{n}}}
\nc{\KLR}{KLR algebra}
\nc{\KLRs}{KLR algebras}
\nc{\cor}{\mathbf{k}}
\nc{\cora}{{\cor(A)}}
\nc{\haut}{\mathrm{ht}}
\nc{\tens}{\mathop\otimes}
\nc{\gmod}{\mbox{-$\mathrm{gmod}$}}
\nc{\gMod}{\mbox{-$\mathrm{gMod}$}}
\nc{\proj}{\mbox{-$\mathrm{proj}$}}
\nc{\gproj}{\mbox{-$\mathrm{gproj}$}}
\nc{\smod}{\mbox{-$\mathrm{mod}$}}
\nc{\Mod}{\mbox{-$\mathrm{Mod}$}}
\nc{\h}{\mathfrak h}
\nc{\Rnorm}{R^{\rm{norm}}}
\nc{\Vhat}{\widehat{V}}
\nc{\F}{\mathcal{F}}
\def\T{{\mathcal T}}
\nc{\fd}[1][A]{\on{\mathrm{flat.dim}_{#1}}}
\nc{\bP}{{\mathbb{P}}}
\nc{\bPh}{\widehat{\mathbb{P}}}
\nc{\bK}[1][{n}]{\widehat{\mathbb{K}}_{#1}}
\nc{\bV}[1][{n}]{\widehat{V}^{\otimes{#1}}}
\nc{\bVK}[1][{n}]{\widehat{V}^{\otimes{#1}}_{\widehat{\mathbb{K}}}}
\nc{\hV}{\widehat{V}}
\nc{\opp}{\mathrm{op}}
\nc{\col}{\colon}
\nc{\bnum}{\be[{\rm(i)}]}
\nc{\oep}{\epsilon}
\nc{\qtext}{\quad\text}
\nc{\qtextq}[1]{\quad\text{#1}\quad}
\nc{\longtwoheadrightarrow}[1][]{\xymatrix{\ar@{->>}[r]^-{{#1}}&}}
\nc{\epiTo}[1][]{\longtwoheadrightarrow[{#1}]}
\nc{\epito}{\twoheadrightarrow}
\nc{\monoTo}[1][]{\xymatrix{\ar@{>->}[r]^-{{#1}}&}}
\nc{\sym}{\mathfrak{S}}
\nc{\inp}[1]{{({#1})_{\mathrm{n}}}}
\nc{\rtl}{\rootl}
\nc{\wtd}{\widetilde}
\nc{\etens}{\boxtimes}
\nc{\ds}[1]{\mathrm{d}(#1)}
\nc{\rmat}[1]{{\mathbf{r}}_%
{\mspace{-2mu}\raisebox{-.6ex}{${\scriptstyle{#1}}$}}}
\nc{\rmats}[1]{{\mathbf{r}}_%
{\mspace{-2mu}\raisebox{-.6ex}{${\scriptscriptstyle{#1}}$}}}
\nc{\shc}{\mathcal{C}}
\nc{\shs}{\mathcal{S}}
\nc{\Fct}{{\on{Fct}}}
\nc{\tC}{\widetilde{\shc}}
\nc{\Zp}{\Z_{\ge0}}
\nc{\tPhi}{\widetilde{\Phi}}
\nc{\tT}{{\widetilde{\T}}}
\nc{\Ob}{\on{Ob}}
\nc{\bwr}{\mbox{\large$\wr$}}
\nc{\Img}{\on{Im}}
\nc{\Ab}{\mathcal{A}^{\mathrm{big}}}
\nc{\Sb}{\mathcal{S}^{\mathrm{big}}}
\nc{\As}{\mathcal{A}}
\nc{\Ss}{\mathcal{S}}
\nc{\ntens}{\widetilde{\otimes}}
\nc{\hR}{\widehat{R}}
\nc{\nconv}{\mathop{\mbox{\large $\odot$}}}
\nc{\snconv}{\mbox{\scriptsize$\odot$}}
\nc{\ts}{\tilde{s}}
\nc{\sho}{\mathcal{O}}
\nc{\bc}{\begin{cases}}
\nc{\ec}{\end{cases}}
\nc{\slnh}{{\widehat{\mathfrak{sl}}_N}}
\nc{\UA}{U_q'(\slnh)}
\nc{\KR}{R_K}
\nc{\cQ}{\mathcal{Q}}
\nc{\Irr}{\mathcal{I}rr}
\nc{\tQ}{\widetilde{\cQ}}
\nc{\bs}{\mathbf{s}}
\nc{\bL}{\mathbb{L}}
\nc{\tg}{\tilde{g}}
\nc{\conv}{\mathbin{\mbox{\large $\circ$}}}
\nc{\shconv}{\mathbin{\large\diamond}}
\nc{\hconv}{\mathbin{\mbox{\Large $\shconv$}}}
\nc{\Rm}{R^{\mathrm{ren}}}
\nc{\bQ}{\ol{Q}}
\renewcommand{\Im}{\on{Im}}
\nc{\de}{\on{\textfrak{d}}}
\nc{\xmono}{\ar@{>->}}
\nc{\xepi}{\ar@{->>}}
\nc{\db}[1]{\raisebox{-.5ex}[2ex][1.8ex]{$#1$}}
\nc{\wb}[1]{\mbox{$\rule[-1.1ex]{0ex}{2ex}#1$}}
\nc{\univ}{\mathrm{univ}}
\nc{\rM}{{}^*\mspace{-2mu}M}
\nc{\lM}{M^*}
\nc{\uqm}{\uq\smod}
\nc{\tR}{\widetilde{R}_{\gamma,\beta}}
\nc{\tx}{\tilde{x}}
\nc{\bi}{\mathbf{i}}
\nc{\ttau}{\widetilde{\tau}}
\newcommand{\Par}{\mathcal{A}}
\newcommand{\Br}{\mathcal{B}}
\newcommand{\LG}{\mathcal{L}}
\nc{\tEnd}{\on{\widetilde{E}nd}}
\nc{\tHom}{\on{\widetilde{H}om}}
\nc{\K}{{J}}
\nc{\Kex}{{\K}_{\mathrm{ex}}}
\nc{\Kfr}{{\K}_{\mathrm{f\mspace{.01mu}r}}}
\nc{\coro}{\cor}
\nc{\tB}{\widetilde{B}}
\nc{\seed}{\mathscr{S}}
\nc{\up}{\mathrm{up}}
\nc{\bfa}{\mathbf{a}}
\newcommand{\ULG}{\mathcal{U}} 
\newlength{\mylength}
\nc{\GCT}{\mathcal E}
\title
{Polynomial invariants on matrices and partition,  Brauer algebras}
\author[Myungho Kim, Doyun Koo]{Myungho Kim, Doyun Koo}
\address{Myungho Kim : Department of Mathematics, Kyung Hee University, Seoul 02447, Korea}
\email{mkim@khu.ac.kr}
\address{Doyun Koo : Department of Mathematics, Kyung Hee University, Seoul 02447, Korea}
\email{doyun9@khu.ac.kr}
\thanks{
This research was  supported by the National Research Foundation of
Korea(NF) grant funded by the Korea government(MSIP) (No. NRF-2017R1C1B2007824).}
\subjclass[2010]  
{
13A50, 
20G43 
05A18 
}
\date{January 18, 2021} 
\begin{document}

\begin{abstract}
We identify the dimension of the centralizer of the symmetric group $\sym_d$ in the partition algebra $\Par_d(\delta)$ and in the Brauer algebra $\Br_d(\delta)$ with the number of multidigraphs with $d$ arrows and the number of disjoint union of directed cycles with $d$ arrows, respectively. 
Using Schur-Weyl duality as a fundamental theory, we conclude that each centralizer is related to the $G$-invariant space $P^d(M_n(\cor))^G$ of degree $d$ homogeneous polynomials on $n\times n$ matrices,  where $G$ is the orthogonal group and the group of permutation matrices, respectively. 
Our approach gives a uniform way to show that the dimensions of $P^d(M_n(\cor))^G$ are stable for sufficiently large $n$.
\end{abstract}

\maketitle


\nc{\Cent}{\mathscr C}
\section{Introduction}

\vskip 1em
Throughout the paper, $\cor$ is an  infinite field.
 The characteristic of  $\cor$ is denoted by $\ch (\cor)$. 
 For a group $G$ and a $G$-set $X$, $X^G$ denotes the set of fixed points in $X$ under the action of $G$. 
Let $P(M_n(\cor))$ be the space of polynomial functions  on the set $M_n(\cor)$ of $n \times n$ matrices over a field $\cor$ and let $P^d(M_n(\cor))$ be the space of homogeneous polynomial functions of degree $d$.   
Our primary interest in this paper is the dimension of the space  $P^d(M_n(\cor))^G$ of polynomial invariants under the conjugation action of a subgroup $G$ of the general linear group $GL_n(\cor)$. 
An interesting result of Willenbring (\cite{WillenbringO}) along this line is that  the dimensions of the space $P^d(M_n(\C))^{O_n(\C)}$, where $O_n(\C)$ is the orthogonal group over $\C$, stabilize for  $n$ which are greater than or equal to  $d$.
In particular, there exists a \emph{stable limit} of  the Hilbert series of the rings $\{P(M_n(\C))^{O_n(\C)}\}_{n\ge 1}$:
$$\lim HS (t) :=\sum_{d=0}^{\infty}  h_d t^d
=\sum_{d=0}^{\infty}  \Big[ \lim_{n \rightarrow \infty} \dim_\C P^d(M_n(\C))^{O_n(\C)} \Big] \, t^d.$$
It is also shown in \cite{WillenbringO} that
the coefficient $h_d$ in $\lim HS (t)$  is equal to the size of the set $\ULG_d^O$ of directed graphs  with $d$ arrows,  whose connected components are directed cycles. 

In \cite{Willenbring}, this stability phenomenon is extended to a large family of invariant rings arising from \emph{classical symmetric pairs} (for the precise definition, see \cite[Section 1.1]{Willenbring}). 
Among those, the invariant ring $P(M_{2m}(\cor))^{Sp_{2m}(\C)}$ appears, where $Sp_{2m}(\C)$ denotes the symplectic group.  
Moreover,  the stable Hilbert series for the family $\{P(M_{2m}(\C))^{Sp_{2m}(\C)}\}_{m\ge 1}$ is equal to $\lim HS(t)$ above.
This stability and the equality on the stable Hilbert series are consequences of a description of  the coefficients in the Hilbert series in terms of Littlewood-Richardson coefficients.  Note that these descriptions are  obtained by a case-by-case investigation for each group (for detail, see \cite[Section 1.7]{Willenbring}).

\medskip
In this paper, we provide a new approach to explain the stability of the Hilbert series for the rings $P(M_n(\cor))^{G_n}$ of some family of subgroups  $G_n$ of $GL_n(\cor)$. Our approach includes the cases that $\cor$ is $\C$ and $G_n$ is either $O_n(\C)$ or $Sp_{2m}(\C)$.
One of the starting points is the following well-known fact:  the space $P^d(M_n(\cor))$ is dual to the \emph{Schur algebra} $S_{n,d}(\cor)$, the image of the $GL_n(\cor)$ action on the tensor power $V^{\tens d}$, where $V=\cor^n$. 
And the Schur algebra $S_{n,d}(\cor)$ can be identified with the centralizer of the symmetric group action on $V^{\tens d}$ by Schur-Weyl duality for $GL_n(\cor)$.
 Based on these facts,   one can identify the dimension of   the space $P^d(M_n(\cor))^{G_n}$ with the dimension of the
  centralizer
  of the  symmetric group $\sym_d$ action inside the algebra of $G_n$-module endomorphisms on   $V^{\tens d}$, provided either $\ch(\cor) =0 $ or $\ch(\cor) > d $ (Theorem \ref{thm:dim}). That is,
 we have the following equality.  
  \eq \label{eq:dimGn} 
  \dim_\cor P^d(M_n(\cor))^{G_n}  = \dim_\cor \Cent_{\End_{G_n}(V^{\tens d})}\left( \Psi_{n,d}(\cor \sym_d^{\rm op}) \right),\eneq
  where $\Psi_{n,d}$ denotes the action of the  symmetric group $\sym_d$ on $V^{\tens d}$.

Even though it is not the subject studied in the present paper, the referee pointed out that 
 the centralizer algebra on the right hand side in \eqref{eq:dimGn}  is also interesting, since it is related to the restriction of modules from $GL(V)$ to $G$ (see Remark \ref{rem: weyl module}). 

  Now the stability of the Hilbert series for the case of orthogonal groups $O_n(\C)$ can be explained as follows: 

    1) By Schur-Weyl duality for $O_n(\C)$, for sufficiently large $n$, there is an isomorphism from the \emph{Brauer algebra} $\Br_d(n)$  to the algebra  $\End_{O_n(\C)}(V^{\tens d})$ of $O_n(\C)$-module endomorphism, which is an extension of $\Psi_{n,d}$.
  
   2) The dimensions of the centralizers $\Cent_{\Br_d(\delta)}(\cor\sym_d)$ are the same for all parameters $\delta$.
    
 Replacing 1) by Schur-Weyl duality for $Sp_{2m}(\C)$, if $m$ is sufficiently large, then 
there is an isomorphism of algebras from $\Br_d(-2m)$ to $\End_{Sp_{2m}(\C)}(V^{\tens d})$.
Hence we also obtain the stability of the Hilbert series for the symplectic groups $Sp_{2m}(\C)$. 
Moreover, we have  $\dim_\C \Cent_{\Br_d(n)}(\cor\sym_d) = \dim_\C \Cent_{\Br_d(-2m)}(\cor\sym_d) $ for all  $n,m \in \Z_{\ge 1}$ by 2),  we obtain the equality between the stable Hilbert series for $O_n(\C)$ and that of $Sp_{2m}(\C)$.

 Note that   the above 1) and 2) hold with a large class of fields.
In \cite{DH}, Doty and Hu proved Schur-Weyl duality for the orthogonal group $O(n, q')$ for infinite fields $\cor$ with $\ch(\cor)\neq 2$ and in \cite{DDH}, Dipper, Doty and Hu proved Schur-Weyl duality for the symplectic group $Sp_n(\cor)$ for arbitrary infinite fields.
Thus the equality \eqref{eq:dimGn} concludes the stability of the Hilbert series for those cases, too.
 Note that the orthgonal groups $O(n,q')$ in \cite{DH}  are  different from  but conjugate to $O_n(\C)$ which Willenbring had given in \cite{Willenbring} (see subsection \ref{og}). 
Summarizing, for sufficiently large $n$, we have the following equalities
$$ \dim_\cor P^d(M_n(\cor))^{O(n, q')}=\dim_\cor P^d(M_n(\cor))^{Sp_n(\cor)}=\dim_\cor \Cent_{\Br_d(\delta)}(\cor \sym_d)=\# \ULG_d^O $$
under some conditions on $\cor$ (Corollary \ref{cor:orthsymp}). 

Let $\widetilde \beta_d$ be the set of \emph{Brauer diagrams} in $\Br_d(\delta)$. 
Then the centralizer $\Cent_{\Br_d(\delta)}(\cor \sym_d)$ has a natural basis consisting of the orbit sums of the $\sym_d$ conjugation action on $\widetilde \beta_d$.
Note that there is a bijection between $\widetilde \beta_d$ and the set $\widetilde I_{2d}$ of fixed-point free involutions on $\{1,2,\ldots,2d\}$. 
Interesting enough, during the identification of $\dim_\cor P^d(M_n(\cor))^{O(n, q)}$ with the number of directed graphs  in  $\ULG_d^O$, Willenbring considered an action of $\sym_d$ on the set $\widetilde I_{2d}$ and it turned out that there is a bijection from the set of orbits to the set $\ULG_d^O$ (\cite[Section 3]{WillenbringO}). 
Because the action of $\sym_d$ coincides with the conjugation action on $\widetilde \beta$, Willenbring's bijection can be understood as a bijection between the natural basis of $\Cent_{\Br_d(\delta)}(\cor \sym_d)$ and the set $\ULG_d^O$
(See Remark  \ref{rem:ShalileWillenbring}.)
 On the other hand, in  \cite{Shalile2011}, Shalile studied the basis of the centralizer $\Cent_{\Br_d(\delta)}(\cor \sym_d)$. It is parametrized by the \emph{generalized cycle types}, which are multisets of certain equivalence classes of sequences in the letters $\{U,L,T\}$. 
Observing a similarity between the construction of generalized cycle types and Willenbring's bijection, 
we give a concrete bijection between the set $\ULG_d^O$  and the set $\GCT_d$ of generalized cycle types (Section \ref{GCT}).

\medskip
In the perspective of the equality \eqref{eq:dimGn}, one may consider other families $\{G_n\}_{n\ge 1}$ different from the orthogonal groups and the symplectic groups.  
We studied the cases $G_n=\Sigma_n$  where 
$\Sigma_n$ denotes the group  of permutation matrices in $GL_n(\cor)$ 
provided
$\ch(\cor)=0$.
By Schur-Weyl duality for $\Sigma_n$ (\cite{HR,Jones}),  if  $n\ge 2d$, then there is an isomorphism  $\Psi_{n,d}$
from the \emph{partition algebra} $\Par_d(n)$ to  $\End_{\cor \Sigma_n}(V^{\tens d})$, which is an extension of the action of symmetric group $\sym_d$ on $V^{\tens d}$.
Recall that the partition algebra $\Par_d(\delta)$ has the diagram basis $\beta_d$ parametrized by the set $\Pi_{2d}$ of set partitions of $\{1,2,\ldots, 2d\}$.  
Since the symmetric group $\sym_d$ acts on  the set $\beta_d$ by conjugation, the centralizer 
$\Cent_{\Par_d(\delta)}(\cor\sym_d)$ has a natural basis consisting of the orbit sums.
Hence one can deduce the stability of the Hilbert series of the ring $P(M_n(\cor))^{\Sigma_n}$ similarly to the case of the Brauer algebra.

Actually, the partition algebra case is better than the Brauer algebra case in the following sense: there is a nice basis $\set{x_\pi}{\pi \in \Pi_{2d}}$ of $\Par_d(n)$ which is compatible with the image and the kernel of the homomorphism $\Psi_{n,d}$ (\cite{HR, BH1}).
Hence the dimensions in \eqref{eq:dimGn} are calculable even in the case $n < 2d$.
A key result in this direction is to extend the Willenbring's bijection between $\widetilde \beta_d$ and $\ULG_d^O$ in the case of partition algebra.
 We find a bijection between the set $\Pi_{2d}$ of set partitions and the set $\LG_d$ of \emph{multidigraphs with $d$ arrows labeled by $\{1,\ldots,d\}$ bijectively}.
This bijection very  is convenient, because the conjugation action of $\sym_d$ on $\Pi_{2d}$ is just the permuting the edge labels of multidigraphs in $\LG_d$ (Theorem \ref{thm:action}). 
In conclusion, we have a bijection between the natural basis of the centralizer $\Cent_{\Par_d(\delta)}(\cor\sym_d)$ and the set  $\ULG_d$ of (unlabeled) multidigraphs with $d$ arrows.
Then by the compatibility of the basis $\set{x_\pi}{\pi \in \Pi_{2d}}$ with $\Psi _{n,d}$, we show that 
the dimension of  $P^d(M_n(\cor))^{\Sigma_n}$ is equal to the cardinality of $\ULG_{d, \le n}$, where $\ULG_{d, \le n}$ denotes the set of multidigraphs with  $d$ arrows whose number of non-isolated vertices is less than or equal to $n$ (Corollary \ref{cor:main_result_for_n_<_2d}). 
 A similar result appeared in \cite{MW} (see Remark \ref{rem:MW}).

We present the results on the partition algebra at first and the results on the Brauer algebra next, because it is natural to explain the Brauer algebra cases as a restriction of the partition algebra case.
\medskip

{\bf Acknowledgements.}  
The first author thank Jonathan Axtell and  Sangjib Kim for introducing useful references. 
The authors would like to thank the anonymous reviewer for valuable comments and suggestions.

\section{Polynomial invariants on matrices and Schur algebras} 

\subsection{Homogeneous polynomial functions on matrices}
For a finite dimensional vector space $W$ over $\cor$, let $P(W)$ be the subalgebra of the algebra  $\cor^W$  of functions from $W$ to $\cor$ generated by the dual space $W^*$ of $W$. We call  $P(W)$  \emph{the algebra of polynomial functions} on $W$. 
Let  $\{x_i \}_{i\in I}$ be a basis of $W^*$.
Then the algebra $P(W)$ is equal to the subalgebra $\cor[x_i ; i \in I]$ of $\cor^{W}$ generated by the functions $x_i$.
Since $\cor$ is an infinite field,  the algebra $P(W)=\cor[x_i ; i \in I]$ can be identified with the polynomial algebra with $\# I$ indeterminates.

A polynomial function $f \in P(W)$ is called \emph{homogeneous of degree $d$} if $f(cw)=c^d f(w)$ for all $w\in W$ and $c \in \cor$.

We have a vector space decomposition
\eqn \label{eq:decomp}
P(W) = \soplus_{d\ge 0}P^d(W),
\eneqn
where $P^d(W)$ denotes the space of homogeneous polynomials of degree $d$.
Note that  $P^d(-)$ is a contravariant endofunctor on the category of finite-dimensional vector spaces over $\cor$: for  a linear map $\phi:W \to V$,  $P^d(\phi) : P^d(V) \to P^d(W)$ is the linear map  given by
$f\mapsto P^d(\phi)(f):=f\circ \phi$ for $f \in P^d(W)$.

Note that $d ! \neq 0$ in a field $\cor$ if and only if either $\ch (\cor) =0$ or $\ch (\cor) >d$.
The following lemma is well-known.
\begin{lemma} \rm (see, for example, \cite[Section 1.5]{Pirashvili}\rm{)} \label{lem:Kuhn_dual}
Assume that either $\ch (\cor) =0$ or $\ch (\cor) >d$.
Then there is an isomorphism $$\eta: P^d(-) \isoto P^d(-^*)^*$$ between the contravariant endofunctors on  the category of finite-dimensional vector spaces over $\cor$.
\end{lemma}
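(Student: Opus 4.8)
The plan is to construct the natural isomorphism $\eta$ explicitly and then check naturality and invertibility. For a finite-dimensional vector space $W$, a degree $d$ homogeneous polynomial $f\in P^d(W)$ is, after choosing a basis $\{x_i\}_{i\in I}$ of $W^*$, a homogeneous polynomial of degree $d$ in the $x_i$; such a thing is the same as a symmetric $d$-linear form on $W$ up to the scalar $d!$, via polarization. So first I would recall the polarization map: given $f\in P^d(W)$, define $\widetilde f\col W\times\cdots\times W\to\cor$ ($d$ factors) by
\[
\widetilde f(w_1,\dots,w_d)=\frac{1}{d!}\sum_{\emptyset\ne S\subseteq\{1,\dots,d\}}(-1)^{d-|S|}f\Bigl(\sum_{j\in S}w_j\Bigr).
\]
This is well-defined precisely because $d!$ is invertible, which is where the hypothesis $\ch(\cor)=0$ or $\ch(\cor)>d$ enters. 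One checks $\widetilde f$ is symmetric and $d$-linear, and that $\widetilde f(w,\dots,w)=f(w)$, so polarization is a linear section of the restitution map. Conversely a symmetric $d$-linear form restitutes to a degree $d$ homogeneous polynomial, and over an infinite field these two operations are mutually inverse bijections between $P^d(W)$ and the space $\mathrm{Sym}^d(W^*)$ of symmetric $d$-linear forms on $W$.

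Next I would identify $\mathrm{Sym}^d(W^*)$ with $P^d(W^*)^*$. A symmetric $d$-linear form $\beta$ on $W$ is an element of $(W^*)^{\otimes d}$ invariant (equivalently, coinvariant, using $d!$ invertible again) under $\sym_d$; dually, $P^d(W^*)$ is the space of degree $d$ homogeneous polynomials on $W^*$, which is the $d$-th symmetric power of $(W^*)^*=W$ in the sense of coinvariants $((W)^{\otimes d})_{\sym_d}$, and the pairing between $\sym_d$-invariants of $(W^*)^{\otimes d}$ and $\sym_d$-coinvariants of $W^{\otimes d}$ is perfect. Concretely, $\eta_W(f)\in P^d(W^*)^*$ should be the functional sending a monomial $\xi_1\cdots\xi_d\in P^d(W^*)$ (with $\xi_j\in W=(W^*)^*$) to $\widetilde f(\xi_1,\dots,\xi_d)$; linearity and well-definedness on $P^d(W^*)$ follow from symmetry and multilinearity of $\widetilde f$. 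I would verify this is a bijection by exhibiting the inverse: given $\Lambda\in P^d(W^*)^*$, pick a basis $\{e_i\}$ of $W$ with dual basis $\{x_i\}$ of $W^*$, and define a polynomial on $W$ whose coefficient of the monomial $\prod x_i^{a_i}$ is read off from the values $\Lambda$ takes on the corresponding monomials in $\{e_i\}\subseteq P^1(W^*)=W^{**}$; checking this is inverse to $\eta_W$ is a routine computation with multinomial coefficients (again needing the characteristic hypothesis so that the relevant multinomials are nonzero).

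The last step is naturality: for a linear map $\phi\col W\to V$, I must check that $P^d(\phi^*)^*\circ\eta_V=\eta_W\circ P^d(\phi)$ as maps $P^d(V)\to P^d(W^*)^*$, where $\phi^*\col V^*\to W^*$ is the transpose and $P^d(\phi^*)\col P^d(W^*)\to P^d(V^*)$, so its dual goes $P^d(V^*)^*\to P^d(W^*)^*$. Unwinding, both sides send $f\in P^d(V)$ to the functional on $P^d(W^*)$ whose value on $\xi_1\cdots\xi_d$ ($\xi_j\in W=W^{**}$) is $\widetilde f\bigl((\phi^*)^*\xi_1,\dots,(\phi^*)^*\xi_d\bigr)=\widetilde{f\circ\phi}(\xi_1,\dots,\xi_d)$, using that polarization commutes with precomposition by a linear map and that $(\phi^*)^*$ restricted to $W^{**}$ is $\phi$ under the canonical identifications. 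This is a direct verification once the formula for $\eta$ is in hand.

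The main obstacle is not any deep idea — it is bookkeeping: keeping straight the several canonical identifications ($W^{**}\cong W$, symmetric multilinear forms versus symmetric powers versus their duals) and making sure every place a factorial or multinomial coefficient is inverted genuinely requires only $d!\ne 0$, not more. I expect that writing $\eta$ via the polarization formula above, rather than via an abstract symmetric-power argument, makes the naturality check mechanical and isolates the characteristic hypothesis cleanly; the inverse-map verification is then the one computation that needs a little care. Since the paper cites \cite[Section 1.5]{Pirashvili} for this, one could alternatively just defer the detailed check to that reference, but the explicit construction is short enough to include.
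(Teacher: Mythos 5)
Your construction is correct and is the standard one: the paper gives no proof of this lemma, deferring entirely to the cited reference, and your explicit polarization argument is precisely what that reference (and any textbook treatment) supplies --- identify $P^d(W)$ with symmetric $d$-linear forms on $W$ via polarization/restitution (the only place $d!\neq 0$ is used), pair those perfectly against the coinvariant description of $P^d(W^*)$ as the symmetric power of $W$, and read off naturality from the polarization formula. The one point worth making explicit is that the perfect pairing between $\sym_d$-invariants of $(W^*)^{\otimes d}$ and $\sym_d$-coinvariants of $W^{\otimes d}$ holds with no restriction on the characteristic, so the hypothesis enters solely through the invertibility of $d!$ in the polarization step, exactly as you isolate it.
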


Our main object is the space  $P^d(M_n(\cor))$  of homogeneous polynomial functions on the space  $M_n(\cor)$  of $n \times n $ matrices. 
There is a left action of the group $GL_n(\cor)$ of invertible $n\times n$ matrices on $P^d(M_n(\cor))$  by conjugation:
\eqn
(g. f)(X) : = f(g^{-1} X g) \qquad (g\in GL_n(\cor), \ f \in P^d(M_n(\cor)), \ X \in M_n(\cor)).
\eneqn
The dual space $P^d(M_n(\cor))^*$  is also a left $GL_n(\cor)$-module with the action given by 
\eqn
(g. \Psi)(f) : = \Psi(g^{-1}.f) \qquad (g\in GL_n(\cor), \ \Psi \in P^d(M_n(\cor))^* \ f\in  P^d(M_n(\cor))).
\eneqn

\begin{prop} \rm \label{prop:dual}
Let $ n,d \in \Z_{\ge 1}$.
Assume that either $\ch (\cor) =0$ or $\ch (\cor) >d$.
Then the $GL_n(\cor)$-module $P^d(M_n(\cor))$ is isomorphic to  its dual $P^d(M_n(\cor))^*$.
\end{prop}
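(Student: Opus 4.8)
The plan is to deduce the self-duality of $P^d(M_n(\cor))$ as a $GL_n(\cor)$-module from Lemma~\ref{lem:Kuhn_dual} together with the existence of a $GL_n(\cor)$-equivariant identification of $M_n(\cor)$ with its own dual. First I would observe that $M_n(\cor) = \End_\cor(V)$, where $V = \cor^n$, and that the conjugation action of $GL_n(\cor)$ on $M_n(\cor)$ is exactly the action induced on $\End_\cor(V) \cong V \tens V^*$ from the standard action on $V$. The trace pairing $\langle X, Y \rangle := \operatorname{tr}(XY)$ on $M_n(\cor)$ is a non-degenerate bilinear form (non-degeneracy holds over any field), and it is invariant under simultaneous conjugation since $\operatorname{tr}(g^{-1}Xg\,g^{-1}Yg) = \operatorname{tr}(g^{-1}XYg) = \operatorname{tr}(XY)$. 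Hence the map $X \mapsto \langle X, -\rangle$ gives an isomorphism of $GL_n(\cor)$-modules $\phi: M_n(\cor) \isoto M_n(\cor)^*$, where the right-hand side carries the contragredient action.

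Next I would feed this into the functoriality of $P^d$. Applying the contravariant functor $P^d$ to $\phi$ yields a linear isomorphism $P^d(\phi): P^d(M_n(\cor)^*) \isoto P^d(M_n(\cor))$; one checks it is $GL_n(\cor)$-equivariant because $\phi$ is equivariant and $P^d$ is a functor (composing with the given equivariance conventions, with the usual care about $g$ versus $g^{-1}$ bookkeeping built into the contravariance). On the other hand, Lemma~\ref{lem:Kuhn_dual} applied to the object $M_n(\cor)$ gives an isomorphism $\eta_{M_n(\cor)}: P^d(M_n(\cor)) \isoto P^d(M_n(\cor)^*)^*$, and since $\eta$ is an isomorphism of functors this is natural, hence $GL_n(\cor)$-equivariant for the induced actions. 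Dualizing the isomorphism $P^d(\phi)$ (which preserves equivariance, passing to contragredients) and composing, I get a chain of $GL_n(\cor)$-module isomorphisms
\[
P^d(M_n(\cor)) \;\xrightarrow{\ \eta_{M_n(\cor)}\ }\; P^d(M_n(\cor)^*)^* \;\xrightarrow{\ (P^d(\phi)^{-1})^*\ }\; P^d(M_n(\cor))^*,
\]
which is the desired isomorphism.

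The one genuinely technical point — and the step I expect to require the most care — is verifying that all the maps in sight are equivariant for the \emph{correct} $GL_n(\cor)$-actions, i.e.\ matching the sign/inverse conventions in the definitions of the action on $P^d(M_n(\cor))$ and on its dual stated just before the proposition. Concretely, one must check that the naturality square for $\eta$ against the automorphism $X \mapsto g^{-1}Xg$ of $M_n(\cor)$ translates into equivariance with respect to the action $(g.f)(X) = f(g^{-1}Xg)$ and its dual, and similarly that $P^d$ applied to the trace-pairing isomorphism intertwines these actions rather than their twists by an automorphism of $GL_n(\cor)$. Since every arrow is built from a genuine $GL_n(\cor)$-equivariant map of vector spaces (the trace form) and from the natural transformation $\eta$, this is bookkeeping rather than a real obstacle, but it is where the proof must be written carefully. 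The hypothesis $\ch(\cor) = 0$ or $\ch(\cor) > d$ enters exactly once, through Lemma~\ref{lem:Kuhn_dual}; everything else works over an arbitrary infinite field.
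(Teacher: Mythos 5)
Your proposal is correct and follows essentially the same route as the paper: the trace pairing gives a $GL_n(\cor)$-equivariant isomorphism $M_n(\cor)\isoto M_n(\cor)^*$, and composing $\eta_{M_n(\cor)}$ with the dual of $P^d$ applied to (the inverse of) that isomorphism yields the self-duality, with the characteristic hypothesis entering only through Lemma~\ref{lem:Kuhn_dual}. The equivariance bookkeeping you flag is exactly what the paper's two commutative diagrams carry out.
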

\begin{proof}
Let $\Phi: M_n(\cor)\to M_n(\cor)^*$  be the  linear isomorphism given by
$$ A \mapsto (B \mapsto tr(AB)) \qquad \text{for} \ A,B \in M_n(\cor).$$

Let $c_g : M_n(\cor) \to M_n(\cor) $ be the map given by $X \mapsto gX g^{-1}$.
Then we have a commutative diagram below for every $g\in GL_n(\cor)$:
\eq \label{eq:trace}
\xymatrix{
M_n(\cor)^* \ar[r]^{\Phi^{-1}} \ar[d]_{ (c_{g^{-1}})^* } & M_n(\cor)  \ar[d]_{c_{g}}\\
M_n(\cor)^* \ar[r]^{\Phi^{-1}}  & M_n(\cor).
}
\eneq
The following diagram is commutative
\eqn
\xymatrix@C=8em@R=5em{
P^d(M_n(\cor)) \ar[r]^{\eta_{M_n(\cor)}} \ar[d]_{P^d(c_{g^{-1}})} &P^d( M_n(\cor)^* )^* \ar[r]^{P^d(\Phi^{-1})^*} \ar[d]_{ P^d((c_{g^{-1}})^*)^* } & P^d(M_n(\cor))^*  \ar[d]_{P^d(c_{g})^*}\\
P^d(M_n(\cor)) \ar[r]^{\eta_{M_n(\cor)}} & P^d(M_n(\cor)^*)^* \ar[r]^{P^d(\Phi^{-1})^*}  & P^d(M_n(\cor))^*,
}
\eneqn
where the left square comes from the isomorphism in Lemma  \ref{lem:Kuhn_dual} and the right square is obtained by applying the covariant functor $P^d(-)^*$ to the square in \eqref{eq:trace}.

Since  the actions of $GL_n(\cor)$ on $P^d(M_n(\cor))$  and $P^d(M_n(\cor))^*$ satisfy that $g.f= P^d(  c_{g^{-1}} )(f)$ and $g.\Psi=P^d(c_{g})^*(\Psi)$, respectively,
the composition $$P^d(\Phi^{-1})^* \circ \eta_{M_n(\cor)} : P^d(M_n(\cor)) \to  P^d(M_n(\cor))^*$$ is an isomorphism of $GL_n(\cor)$-modules, as desired.
\end{proof}

\subsection{Schur algebras and polynomial invariants on matrices}

Let $V=\cor^n$ be the natural representation of $GL_n(\cor)$.
Then $GL_n(\cor)$ acts diagonally on the $d$-th tensor power $V^{\tens d}$ from the left.  
For a group $G$, the group algebra over $\cor$ is denoted by $\cor G$. 
Let us denote by 
\eqn 
\Phi_{n,d} : \cor GL_n(\cor) \to \End_\cor(V^{\tens d})
\eneqn
 the algebra homomorphism induced  by the action.

On the other hand the symmetric group $\sym_d$ acts on $V^{\tens d}$ from the right by place permutation:
 \eq \label{eq:right action of S_d}
(w_1 \tens \cdots \tens w_d).\sigma := (w_{\sigma(1)} \tens \cdots \tens w_{\sigma(d)} ) \qquad \text{for} \ w_1,\ldots,w_d \in V, \ \text{and} \ \sigma \in \sym_d.
\eneq
It induces an algebra homomorphism $\Psi_{n,d}  : \cor \sym_d^\opp \to  \End(V^{\tens d})$,  where $A^\opp$ denotes the opposite ring of a ring $A$.
Then we have 
\eq \Psi_{n,d}(\cor\sym_d^\opp) = \End_{\cor GL_n(\cor)}(V^{\tens d}). \label{eq:SWsymm1}
\eneq
If $n \ge d$, then $\Psi_{n,d}$ is an isomorphism of $\cor$-algebras.

We call the image $\Phi_{n,d} ( \cor GL_n(\cor))$  the \emph{Schur algebra} and denote it  $S_{n,d}(\cor)$.
Then the centralizer of the symmetric group action on $V^{\tens d}$ is the same as the Schur algebra $S_{n,d}(\cor)$  (for example, see \cite[Theorem 2.13]{Martin}):
\eqn
S_{n,d}(\cor):=\Phi_{n,d} ( \cor GL_n(\cor))=\End_{\cor \sym_d^{\opp} } (V^{\tens d}).
\eneqn

For  each function $f$ on $GL_n(\cor)$, define a function $\tilde f$ on $\cor GL_n(\cor)$ as follows: if $\phi = \sum_{g \in GL_n(\cor)} a_g g$, then $\tilde f(\phi):=\sum_{g \in GL_n(\cor)} a_g f(g)$.

An important relation between $P^d(M_n(\cor))^*$ and $S_{n,d}(\cor)$ can be summarized as in the following theorem. 
\begin{theorem} \rm(\cite[Theorem 2.2.4]{Martin}\rm) \
There is a nondegenerate bilinear form $$\lan \ , \ \ran : P^d(M_n(\cor)) \times S_{n,d}(\cor) \to \cor$$ given by
\eqn
\lan f, \xi \ran := \tilde f(\phi) ,\quad \text{where} \ f \in  P^d(M_n(\cor)), \quad \xi=\Phi_{n,d}(\phi) \in S_{n,d}(\cor) \ \text{for some} \ \phi \in \cor GL_n(\cor). 
\eneqn
\end{theorem}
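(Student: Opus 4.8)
The plan is to construct the bilinear form explicitly and then verify nondegeneracy by exhibiting a dual pair of bases. First I would set up coordinates: write $X = (X_{ij})_{1\le i,j\le n}$ for a generic matrix in $M_n(\cor)$, so that the coordinate functions $X_{ij}$ form a basis of $M_n(\cor)^*$ and $P^d(M_n(\cor))$ has as a basis the monomials $X_{i_1j_1}X_{i_2j_2}\cdots X_{i_dj_d}$, indexed (up to reordering of the $d$ factors) by multisets of pairs, equivalently by $\sym_d$-orbits on pairs of functions $\mathbf{i},\mathbf{j}\colon\{1,\ldots,d\}\to\{1,\ldots,n\}$. On the Schur algebra side, recall that the image $\Phi_{n,d}(g)$ of $g\in GL_n(\cor)$ acting on $V^{\otimes d}$, written in the standard basis $\{e_{k_1}\otimes\cdots\otimes e_{k_d}\}$, has matrix entries $\prod_{t=1}^d g_{a_t b_t}$; hence $S_{n,d}(\cor)$ is spanned by the operators $\xi_{\mathbf{a},\mathbf{b}}$ with matrix entries that are, as functions of $g\in GL_n(\cor)$, exactly the monomials $\prod_t g_{a_tb_t}$. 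The key observation is that under the map $f\mapsto\tilde f$ extending a polynomial function on $GL_n(\cor)$ linearly to $\cor GL_n(\cor)$, the monomial $X_{i_1j_1}\cdots X_{i_dj_d}\in P^d(M_n(\cor))$ pairs with $\Phi_{n,d}(\phi)$ by reading off the corresponding coefficient, and this recovers the duality between monomials in the $X_{ij}$ and the spanning operators of $S_{n,d}(\cor)$.

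Next I would check that the form is well-defined, i.e.\ independent of the choice of $\phi\in\cor GL_n(\cor)$ with $\Phi_{n,d}(\phi)=\xi$: if $\Phi_{n,d}(\phi)=0$ then $\tilde f(\phi)=0$ for every $f\in P^d(M_n(\cor))$. This is where the hypothesis that $\cor$ is infinite is essential: the matrix entries of $\Phi_{n,d}(g)$ are the degree-$d$ monomials in the $g_{ij}$, these are linearly independent as polynomial functions on $GL_n(\cor)$ (a Zariski-dense subset of $M_n(\cor)$), so $\ker\Phi_{n,d}$ is precisely the set of $\phi=\sum a_g g$ for which $\sum_g a_g\prod_t g_{a_tb_t}=0$ for all index tuples — which is exactly the condition that $\tilde f(\phi)=0$ for all $f$ in a spanning set of $P^d(M_n(\cor))$, hence for all $f$. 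Equivalently, one can phrase this as: $P^d(M_n(\cor))$ is naturally identified with the degree-$d$ part of the coordinate ring image, and $S_{n,d}(\cor)$ with its linear dual, because $\Phi_{n,d}$ factors through the function space spanned by those monomials.

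Then nondegeneracy on both sides follows from this bijective monomial picture. For a fixed index pattern let $\xi_{\mathbf{i},\mathbf{j}}$ be the corresponding spanning operator in $S_{n,d}(\cor)$; then $\langle X_{i_1j_1}\cdots X_{i_dj_d},\,\xi_{\mathbf{k},\mathbf{l}}\rangle$ equals the number of $\sigma\in\sym_d$ with $(\mathbf{k}\circ\sigma,\mathbf{l}\circ\sigma)=(\mathbf{i},\mathbf{j})$ (a nonzero integer, nonzero in $\cor$ as long as $\cor$ is infinite — or one normalizes to get $1$) when the orbits match and $0$ otherwise. After passing to the orbit-indexed bases on both sides, the Gram matrix is diagonal with nonzero entries, so the form is nondegenerate. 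The main obstacle I anticipate is bookkeeping: making the identification of $S_{n,d}(\cor)$'s spanning set with monomials precise and verifying that the ``diagonal with nonzero entries'' claim holds in positive characteristic — this is exactly why one needs $\cor$ infinite (for linear independence of the monomials as functions) but, notably here, \emph{not} the condition $\ch(\cor)=0$ or $\ch(\cor)>d$, since the nonzero diagonal entries can be normalized away. Everything else is a routine unwinding of the definitions of $\Phi_{n,d}$, the place-permutation action, and the extension $f\mapsto\tilde f$.
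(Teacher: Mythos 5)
The paper offers no proof of this statement; it is quoted directly from \cite[Theorem 2.2.4]{Martin}. So there is nothing internal to compare against, and I will just assess your argument, which is in substance the standard one (the duality between the Schur algebra and the degree-$d$ part of the coordinate ring of $M_n$) and is essentially correct. The load-bearing observations are exactly the right ones: $\langle \prod_t X_{i_tj_t},\,\Phi_{n,d}(\phi)\rangle$ is the $(\mathbf{i},\mathbf{j})$ matrix entry of the operator $\Phi_{n,d}(\phi)$, which gives both well-definedness of the form and injectivity of $\xi\mapsto\langle-,\xi\rangle$ at once (a nonzero operator has a nonzero matrix entry); and injectivity of $f\mapsto\langle f,-\rangle$ comes from Zariski density of $GL_n(\cor)$ in $M_n(\cor)$ over an infinite field. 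Your closing remark that only ``$\cor$ infinite'' is needed, and not the characteristic hypothesis, is also correct and consistent with how the theorem is stated in the paper.

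Two soft spots, neither fatal. First, the parenthetical ``a nonzero integer, nonzero in $\cor$ as long as $\cor$ is infinite'' is false: $\overline{\mathbb{F}_p}$ is infinite, and the stabilizer order $\#\{\sigma:(\mathbf{k}\circ\sigma,\mathbf{l}\circ\sigma)=(\mathbf{k},\mathbf{l})\}$ can vanish in characteristic $p$. Your own alternative --- use the orbit-indicator operators, whose matrix entries are $0$ or $1$, so the Gram matrix is the identity --- is the correct fix and should be the primary formulation, not a normalization afterthought. Second, the dual-basis packaging quietly assumes that these orbit-indicator operators $\xi_{\mathbf{a},\mathbf{b}}$ actually lie in (and span) $S_{n,d}(\cor)=\Phi_{n,d}(\cor GL_n(\cor))$; that they span $\End_{\cor\sym_d^{\opp}}(V^{\tens d})$ is immediate, but that $S_{n,d}(\cor)$ fills out this whole space is the nontrivial half of Schur--Weyl duality over an infinite field and is not free. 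Fortunately you do not need it: the two injectivity statements above already establish nondegeneracy without identifying a basis of $S_{n,d}(\cor)$ at all (and, as a byproduct, they force $\dim S_{n,d}(\cor)=\dim P^d(M_n(\cor))$). I would restructure the writeup to lead with those two injectivities and relegate the dual-basis picture to a remark.
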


Define  a left $GL_n(\cor)$-module structure on $S_{n,d}(\cor)$ by
\eqn
\lan f, g.\xi \ran  : =\lan  g^{-1}.f, \xi \ran   \qquad \text{for} \ f \in  P^d(M_n(\cor)), \ \xi \in S_{n,d}(\cor), \ \text{and} \ g \in G.
\eneqn
Thus we have an isomorphism between  $P^d(M_n(\cor))^*$ and $S_{n,d}(\cor)$ as $GL_n(\cor)$-modules given by $\xi \mapsto \langle-, \xi \rangle$ for all $\xi \in S_{n, d}(\cor)$.

\begin{prop} \rm \label{prop:action}
We have 
\eqn
g.\xi = \Phi_{n,d}(g) \circ \xi \circ  \Phi_{n,d}  (g^{-1}) \qquad \text{for} \ g \in GL_n(\cor), \ \xi \in S_{n,d}(\cor).
\eneqn
Hence  for any subgroup $G$ of $GL_n(\cor)$, we have 
$$S_{n,d}(\cor)^G=\Cent_{\End_{\Phi_{n,d}(\cor G)}(V^{\tens d})}\left( \Psi_{n,d}(\cor \sym_d^{\rm op}) \right).$$
\end{prop}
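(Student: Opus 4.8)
The plan is to prove the two assertions in order, the first being a direct computation and the second following formally from it together with the identification $S_{n,d}(\cor)=\End_{\cor\sym_d^{\opp}}(V^{\tens d})$ recorded just above.

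\textbf{Step 1: the formula $g.\xi=\Phi_{n,d}(g)\circ\xi\circ\Phi_{n,d}(g^{-1})$.} I would unwind the definition of the $GL_n(\cor)$-action on $S_{n,d}(\cor)$, which was transported from $P^d(M_n(\cor))^*$ through the nondegenerate pairing $\lan\ ,\ \ran$. So for $f\in P^d(M_n(\cor))$, $\xi=\Phi_{n,d}(\phi)$ with $\phi=\sum_g a_g g$, one has $\lan f,g.\xi\ran=\lan g^{-1}.f,\xi\ran=\widetilde{(g^{-1}.f)}(\phi)=\sum_h a_h (g^{-1}.f)(h)$. Here the key point is the explicit description of the conjugation action on $P^d(M_n(\cor))$: $(g^{-1}.f)(X)=f(gXg^{-1})$. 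Now I want to recognize the right-hand side as $\lan f,\Phi_{n,d}(g)\circ\xi\circ\Phi_{n,d}(g^{-1})\ran$. Since $\Phi_{n,d}$ is an algebra homomorphism, $\Phi_{n,d}(g)\circ\Phi_{n,d}(\phi)\circ\Phi_{n,d}(g^{-1})=\Phi_{n,d}(g\phi g^{-1})$ where $g\phi g^{-1}=\sum_h a_h\, ghg^{-1}$ in $\cor GL_n(\cor)$. Thus it suffices to show $\lan f,\Phi_{n,d}(ghg^{-1})\ran=(g^{-1}.f)(h)$, i.e. $\widetilde f$ evaluated appropriately matches $f(ghg^{-1})$. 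By definition $\lan f,\Phi_{n,d}(ghg^{-1})\ran=\widetilde f(ghg^{-1})=f(ghg^{-1})$, and $(g^{-1}.f)(h)=f(ghg^{-1})$ by the action formula. So the two agree, and by nondegeneracy of the pairing the identity $g.\xi=\Phi_{n,d}(g)\circ\xi\circ\Phi_{n,d}(g^{-1})$ follows.

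\textbf{Step 2: deduce the description of $S_{n,d}(\cor)^G$.} Given Step 1, an element $\xi\in S_{n,d}(\cor)$ is $G$-invariant iff $\Phi_{n,d}(g)\circ\xi=\xi\circ\Phi_{n,d}(g)$ for all $g\in G$, i.e. iff $\xi$ commutes with every element of $\Phi_{n,d}(\cor G)$ inside $\End_\cor(V^{\tens d})$. Equivalently $\xi\in\End_{\Phi_{n,d}(\cor G)}(V^{\tens d})$. On the other hand, $S_{n,d}(\cor)=\End_{\cor\sym_d^{\opp}}(V^{\tens d})=\Cent_{\End_\cor(V^{\tens d})}(\Psi_{n,d}(\cor\sym_d^{\opp}))$, so the condition ``$\xi\in S_{n,d}(\cor)$ and $\xi$ commutes with $\Phi_{n,d}(\cor G)$'' is exactly ``$\xi\in\End_{\Phi_{n,d}(\cor G)}(V^{\tens d})$ and $\xi$ commutes with $\Psi_{n,d}(\cor\sym_d^{\opp})$'', which is the centralizer $\Cent_{\End_{\Phi_{n,d}(\cor G)}(V^{\tens d})}(\Psi_{n,d}(\cor\sym_d^{\opp}))$. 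This is a purely formal manipulation of centralizers in $\End_\cor(V^{\tens d})$, using only that $\Phi_{n,d}$ and $\Psi_{n,d}$ land in a common endomorphism algebra and that $S_{n,d}(\cor)$ is defined as one of the relevant centralizers; I should just be careful that the $\sym_d$-action is a right action, so $\Psi_{n,d}(\cor\sym_d^{\opp})\subseteq\End_\cor(V^{\tens d})$ is the correct object and no op/non-op mismatch creeps in.

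\textbf{Main obstacle.} There is no serious obstacle; the only delicate point is bookkeeping with the several dualizations — the pairing $\lan\ ,\ \ran$ is between $P^d(M_n(\cor))$ and $S_{n,d}(\cor)$ (not the dual), the $GL_n$-action on $S_{n,d}(\cor)$ is defined so that $\xi\mapsto\lan-,\xi\ran$ is $GL_n$-equivariant into $P^d(M_n(\cor))^*$, and one must track the inverses/opposites correctly when passing through $\Phi_{n,d}$ as an algebra homomorphism (so conjugation by $g$ on the group algebra pushes forward to conjugation by $\Phi_{n,d}(g)$ on operators). Once those conventions are fixed, Step 1 is a one-line verification via nondegeneracy and Step 2 is immediate.
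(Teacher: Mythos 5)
Your proposal is correct and follows essentially the same route as the paper: the paper also computes $\lan f, g.\xi\ran=\lan g^{-1}.f,\xi\ran=f(g\phi g^{-1})=\lan f,\Phi_{n,d}(g\phi g^{-1})\ran$ (taking $\xi=\Phi_{n,d}(\phi)$ with $\phi$ a single group element, by linearity) and concludes via nondegeneracy of the pairing, with the centralizer description then following formally from $S_{n,d}(\cor)=\End_{\cor\sym_d^{\opp}}(V^{\tens d})$ exactly as in your Step 2.
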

\begin{proof}
 By the linearity we may assume that $\xi =\Phi_{n,d}(\phi)$ for some $\phi \in GL_n(\cor)$. 
Then we have 
\eqn
\lan f, g.\xi \ran  =\lan  g^{-1}.f, \xi \ran  =\widetilde{(g^{-1}.f)}( \phi)
=(g^{-1}.f)( \phi) = f(g \phi g^{-1}) = \lan f, \Phi_{n,d}(g \phi g^{-1}) \ran
\eneqn
so that 
\eqn 
g.\xi  = \Phi_{n,d}(g) \circ \Phi_{n,d}(\phi) \circ \Phi_{n,d}( g^{-1})=  \Phi_{n,d}(g) \circ \xi \circ \Phi_{n,d}( g^{-1}),
\eneqn
as desired.
\end{proof}

Combining Proposition \ref{prop:dual} and Proposition \ref{prop:action},  we obtain the following theorem.
\begin{theorem} \rm \label{thm:dim}
Let $\cor$ be an infinite field.
Assume that  $\ch (\cor)=0$ or $\ch (\cor) > d$. Then for any subgroup $G$ of $GL_n(\cor)$, we have 
$$\dim_\cor P^d(M_n(\cor))^G  = \dim_\cor \Cent_{\End_{\Phi_{n,d}(\cor G)}(V^{\tens d})}\left( \Psi_{n,d}(\cor \sym_d^{\rm op}) \right).$$
\end{theorem}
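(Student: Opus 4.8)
The plan is to simply assemble the two propositions that have just been established. First I would invoke Proposition~\ref{prop:dual}: since $\ch(\cor)=0$ or $\ch(\cor)>d$, the $GL_n(\cor)$-module $P^d(M_n(\cor))$ is isomorphic to its dual $P^d(M_n(\cor))^*$, and in particular taking $G$-fixed points of both sides (for any subgroup $G\subseteq GL_n(\cor)$) gives
$$\dim_\cor P^d(M_n(\cor))^G = \dim_\cor \bigl(P^d(M_n(\cor))^*\bigr)^G.$$
Next I would recall from the discussion preceding Proposition~\ref{prop:action} that the bilinear pairing $\lan\ ,\ \ran$ of \cite[Theorem 2.2.4]{Martin} is nondegenerate and $GL_n(\cor)$-equivariant, so $\xi\mapsto\lan -,\xi\ran$ is an isomorphism of $GL_n(\cor)$-modules $S_{n,d}(\cor)\isoto P^d(M_n(\cor))^*$; restricting to $G$ yields $\dim_\cor\bigl(P^d(M_n(\cor))^*\bigr)^G = \dim_\cor S_{n,d}(\cor)^G$.

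Finally I would apply Proposition~\ref{prop:action}, which identifies $S_{n,d}(\cor)^G$ with the centralizer $\Cent_{\End_{\Phi_{n,d}(\cor G)}(V^{\tens d})}\bigl(\Psi_{n,d}(\cor\sym_d^{\opp})\bigr)$; note that this step uses the fact, stated after \eqref{eq:SWsymm1}, that $\Psi_{n,d}(\cor\sym_d^{\opp})=\End_{\cor GL_n(\cor)}(V^{\tens d})$ so that the Schur algebra really is the full centralizer of the $\sym_d$-action, which is what lets Proposition~\ref{prop:action} describe the $G$-invariants as a double centralizer. Chaining the three displayed equalities gives
$$\dim_\cor P^d(M_n(\cor))^G = \dim_\cor \Cent_{\End_{\Phi_{n,d}(\cor G)}(V^{\tens d})}\bigl(\Psi_{n,d}(\cor\sym_d^{\opp})\bigr),$$
which is the assertion.

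There is essentially no obstacle here: the theorem is a formal consequence of the two preceding propositions and the Martin pairing, and all the hypotheses ($\cor$ infinite, $d!\neq 0$ in $\cor$) are exactly those needed to invoke Lemma~\ref{lem:Kuhn_dual} through Proposition~\ref{prop:dual}. The only point requiring a moment's care is bookkeeping: making sure that the $GL_n(\cor)$-module structure put on $S_{n,d}(\cor)$ via the pairing is the \emph{same} structure for which Proposition~\ref{prop:action} computes the invariants, and that passing to $G$-fixed points is exact enough for the dimension count (it is, over a field, since taking invariants under a group is left exact and here all spaces are finite-dimensional and the maps are isomorphisms, so fixed-point subspaces have equal dimension). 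Thus the proof is a three-line diagram chase, and I would present it as such.
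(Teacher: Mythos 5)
Your proof is correct and is exactly the paper's argument: the theorem is stated there as an immediate consequence of combining Proposition~\ref{prop:dual} (self-duality of $P^d(M_n(\cor))$ under the hypothesis $d!\neq 0$) with the Martin pairing identifying $P^d(M_n(\cor))^*$ with $S_{n,d}(\cor)$ as $GL_n(\cor)$-modules and Proposition~\ref{prop:action} computing $S_{n,d}(\cor)^G$ as the stated centralizer. Your additional bookkeeping remarks (equivariance of the pairing, finite-dimensionality) are sound and match the paper's implicit reasoning.
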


\begin{example}
Let $G=GL_n(\cor)$ in Theorem \ref{thm:dim}.
 Assume that $\ch(\cor)=0$. 
It is well-known that  the algebra $P(M_n(\cor))^{GL_n(\cor)}$ is isomorphic to the $\cor$-algebra of symmetric polynomials in $n$ variables as graded algebras (see, for example, \cite[Example 1.2]{Dolg}). It follows that
\eqn 
\dim_\cor (P^d(M_n(\cor))^{GL_n(\cor)})  = \#\set{\la \vdash d}{ \ell(\la) \le n},
\eneqn
 where $\la \vdash d$ denotes a partition $\la$ of $d$, and $\ell(\la)$ denotes the length of a partition $\la$.

On the other hand,  
the dimension of $P^d(M_n(\cor))^{GL_n(\cor)}$ is equal to the dimension of the center of the algebra  $\Psi_{n,d} (\cor \sym_d^\opp) $ by Theorem \ref{thm:dim} and \eqref{eq:SWsymm1}.
Thus one can recover the above equality  in this case from a result in  \cite[Theorem 7]{GePr}:
 the set $\set{\Psi_{n,d}(c_\la)}{\la \vdash d, \ \ell(\la) \le n }$ forms a basis of the center of $\Psi_{n,d} (\cor \sym_d^\opp) $, where 
$c_\la$ denotes the sum of all permutations whose cycle type is $\la$ for a partition $\la$ of $d$.

\end{example}

\begin{remark} \label{rem: weyl module}
Even though we are focusing on a relation of the algebra 
$S_{n,d}(\cor)^G$ 
with the ring of invariants $P(M_n(\cor))^G$ in the present paper, the algebra $S_{n,d}(\cor)^G$ is also interesting in the following reason: 
If $G=O_n(\C), Sp_n(\C)$ or a finite subgroup of $GL_n(\C)$, and  $M$ is a finite-dimensional representation of $\C \sym_d^\opp$, then the algebra $S_{n,d}(\C)^G$ acts on the space
$\check M:=  \Hom_{\C\sym_d^\opp}(M,V^{\tensor d})$ in a natural way.
Moreover the action commutes with the one of $G$ on $\check M$.  
Hence the multiplicity spaces for the restriction of $\check M$ from $GL_n(\C)$ to $G$ are  representations of the algebra $S_{n,d}(\C)^G$.

\end{remark}

\vskip1em

\section{Group of permutation matrices and  partition algebras}

\subsection{Partition algebra and Schur-Weyl duality}
We recall the partition algebra and its bases following \cite[Section 2.1,  Section 2.2]{BH1}. 
\vskip 1em

For $d \in \mathbb{Z}_{\ge 1}$, we consider the set partitions of $[1, 2d]:= \{1, 2, \cdots , 2d\}$ into disjoint nonempty subsets, called $\emph{blocks}$, and define 
$$\Pi_{2d}:=\{\text{set partitions of $[1, 2d]$} \}.$$

For each $\pi \in \Pi_{2d}$ let $|\pi|$ be the number of blocks of $\pi$.  
 We associate a  \emph{diagram $D_\pi$} to $\pi$ as follows:
It has two rows of $d$ vertices each, with the bottom vertices indexed by $1, 2, \dots , d$ and the top vertices indexed by $d+1, d+2, \cdots , 2d$ from left to right. 
Vertices are connected by an edge if they lie in the same block of $\pi$. 
Note that the way the edges are drawn is immaterial, what matters is that the connected components of the diagram $D_\pi$ correspond to the blocks of the set partition $\pi$. 
Thus, $D_\pi$ represents the equivalence class of all diagrams with connected components equal to the blocks of $\pi$. 

We write briefly
\eqn i'\seteq
\begin{cases}
i+d & \text{if} \ 1 \le i \le d, \\
i-d & \text{if} \  d+1\le i\le 2d.
\end{cases}
\eneqn
We denote $\beta_d$ as the set $\{D_\pi | \pi \in \Pi_{2d} \}$. 
Sometimes we will confuse $\pi$ and $D_\pi$ for simplicity.

\vskip 1em
\begin{definition} \rm
Let $\cor$ be a field and $\delta \in \cor$. 
The \emph{partition algebra} $\Par_d(\delta)$  is an associative $\cor$-algebra with $\beta_d$ as a basis where the multiplication $D_{\pi_1} D_{\pi_2}$ for any two diagrams $D_{\pi_1}, D_{\pi_2}$ is defined as the following: 
\begin{enumerate}[(i)]
\item Draw two diagrams vertically, $D_{\pi_1}$ on the top and $D_{\pi_2}$ at the bottom.
\item Identify the vertices in the bottom row of $D_{\pi_1}$ with those in the top row of $D_{\pi_2}$. 
\item Delete all connected components that entirely lie in the middle row of the joined diagrams. 
 We denote by $\pi_1 \ast \pi_2$  the set partition represented by the thusly obtained diagram. 
\item  The product is given by $D_{\pi_1} D_{\pi_2}=\delta^{[\pi_1 \ast \pi_2]}D_{\pi_1 \ast \pi_2}$,
where $[\pi_1 \ast \pi_2]$ denote the number of blocks removed from the middle row.
\end{enumerate}
We call $\beta_d$ the \emph{diagram basis} of $\Par_d(\delta)$. 
\end{definition}
The set partition $\pi_1\ast \pi_2$ and the nonnegative integer $[\pi_1\ast \pi_2]$  depend only on the underlying set partitions $\pi_1, \pi_2$ and are independent of the diagrams chosen to represent them. In particular, the product $D_{\pi_1} D_{\pi_2}$ depends only on the set partitions $\pi_1$ and $\pi_2$.

Note that  a set partition of $[1,2d]$ each of whose blocks is of the form $\{i,j'\}$ for some $1\le i,j \le d$ can be identified with the permutation $\sigma$ in $\sym_d$ given by $\sigma(i)=j$. 
Under this correspondence,  the group algebra $\cor \sym_d$ is embedded into $\Par_d(\delta)$.
Note that we have
\eq \label{eq:action_on_diagram}
D_\sigma D_{\pi} D_{\sigma'} = D_{\sigma * \pi  *\sigma'}
\eneq
for $\pi \in \Pi_{2d}$ and $\sigma, \sigma'\in \sym_{d}$.
\medskip
In particular, the symmetric group $\sym_d$ acts on $\Pi_{2d}$ by conjugation:
 $$\sigma.\pi : = \sigma \ast \pi \ast \sigma^{-1}.$$

Let $\cor$ be a field  
and 
$V$ be a finite-dimensional $\cor$-vector space. We fix a basis $\{ v_i \in V \, | \,  i=1, 2, \dots , n \}$ and  identify $GL_n(\cor)$ with the group of automorphisms on $V$.
 Let  $\Sigma_n$ be the subgroup of $GL_n(\cor)$ consisting of all the permutation matrices. 
Then the group $\Sigma_n$ is isomorphic to the symmetric group $\sym_n$.

For each sequence ${\bold i}=(i_1,\ldots,i_d) \in [1,n]^d$, set 
$$v_{\bold i} :=v_{i_1} \tens v_{i_2} \tens \cdots \tens v_{i_d}$$
so that $\set{v_{\bold i}}{\bold i \in [1,n]^d}$ forms a basis of the tensor power $V^{\tens d}$.

For ${\bold s}, \bold{r} \in [1, n]^d$,
let $E_{\bold s} ^{\bold r}$ be the element in $\End(V^{\tens d})$ given by $$E_{\bold s} ^{\bold r}v_{\bold t}=\delta_{\bold s, \bold t}v_{\bold r}$$
 for all $\bold t \in [1,n]^d$.
For  each $\pi \in \Pi_{2d}$,  define
$$N_d(\pi)=\{(u_1,\ldots,u_{2d} ) \in [1,n]^{2d} \ \text{such that if $i$ and $j$ belong to the same block of $\pi$ then $u_i=u_j$}\}.$$

Define a linear map $\Psi_{n, d} : \Par_d(n) ^\opp \longrightarrow \End(V^{\tens d})$ by 
$$\Psi_{n, d}(D_\pi)=\displaystyle \sum_{(\bold r, \bold r') \in N_d(\pi)} E_{\bold r'} ^{\bold r}.$$

\vskip 1em
\begin{theorem} \rm (\cite[Theorem 3.6]{HR}, \cite{Jones}) \rm 
\label{swdsg}
Assume that   $\cor$ is a  field with  $\ch (\cor)=0$.
The map $\Psi_{n, d}$ is an algebra homomorphism whose image 
$\Psi_{n, d}(\Par_d(n)^\opp)$ is equal to $\End_{\cor \Sigma_n (\cor)} (V^{\tens d})$. 
Moreover if $n \ge 2d$, then $\Psi_{n,d}$ is injective. 
\end{theorem}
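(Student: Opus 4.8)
The plan is to establish, in order, that $\Psi_{n,d}$ is a homomorphism, that its image is the full commutant $\End_{\cor\Sigma_n}(V^{\tens d})$, and that it is injective once $n\ge 2d$. First I would check the homomorphism property. Since $\Psi_{n,d}(D_\pi)$ depends on $\pi$ only through the index set $N_d(\pi)\subseteq[1,n]^{2d}$, it is well defined on $\Par_d(n)$, and the only substantive point is to compute $\Psi_{n,d}(D_{\pi_1})\circ\Psi_{n,d}(D_{\pi_2})$ using $E^{\mathbf{a}}_{\mathbf{b}}\circ E^{\mathbf{c}}_{\mathbf{e}}=\delta_{\mathbf{b},\mathbf{c}}E^{\mathbf{a}}_{\mathbf{e}}$. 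For each fixed pair of outer index tuples, the coefficient that arises is the number of ``middle-row'' tuples in $[1,n]^d$ that are simultaneously constant on the blocks imposed by $\pi_1$ and by $\pi_2$: a connected component of the glued diagram lying entirely in the identified middle row is a free parameter in $[1,n]$, so the middle-row components that are deleted when forming the product contribute exactly the scalar $n^{[\pi_2\ast\pi_1]}=\delta^{[\pi_2\ast\pi_1]}$, while the surviving constraints say precisely that the outer pair lies in $N_d(\pi_2\ast\pi_1)$. This reproduces the defining multiplication rule of $\Par_d(n)$ read in the opposite order, so $\Psi_{n,d}\colon\Par_d(n)^{\opp}\to\End(V^{\tens d})$ is a $\cor$-algebra homomorphism. (One may instead factor $\Psi_{n,d}$ through the flip anti-automorphism of $\Par_d(n)$ and run the same count for $\Par_d(n)$ itself.)

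Next I would identify the image. Writing $\Sigma_n\cong\sym_n$ acting on $V=\cor^n$ by permuting the fixed basis, $\sym_n$ acts diagonally on $V^{\tens d}$ and conjugation by $w\in\sym_n$ carries $E^{\mathbf{r}}_{\mathbf{s}}$ to $E^{w\mathbf{r}}_{w\mathbf{s}}$; since each $N_d(\pi)$ is stable under the diagonal $\sym_n$-action on $[1,n]^{2d}$, every $\Psi_{n,d}(D_\pi)$ commutes with $\sym_n$, so the image is contained in $\End_{\cor\Sigma_n}(V^{\tens d})$. For the reverse inclusion I would invoke the elementary fact that, over any field, $\End_{\cor\Sigma_n}(V^{\tens d})$ has a basis given by the orbit-sum operators $O_\tau$ — the sum of the $E^{\mathbf{r}}_{\mathbf{s}}$ over a single diagonal $\sym_n$-orbit of $[1,n]^{2d}$ — and that these orbits correspond, via $u\mapsto\ker(u)$ (the partition with $i\sim j$ iff $u_i=u_j$), to the set partitions $\tau\in\Pi_{2d}$ with at most $n$ blocks. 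Since $N_d(\pi)$ is the disjoint union of the orbits indexed by those $\tau\ge\pi$ in the refinement order, one gets $\Psi_{n,d}(D_\pi)=\sum_{\tau\ge\pi,\ |\tau|\le n}O_\tau$; for $|\pi|\le n$ this is $O_\pi$ plus orbit sums strictly coarser than $\pi$. Hence the transition matrix between $\{\Psi_{n,d}(D_\pi):|\pi|\le n\}$ and the basis $\{O_\tau:|\tau|\le n\}$ is unitriangular for any linear extension of the refinement order, so these $\Psi_{n,d}(D_\pi)$ already span $\End_{\cor\Sigma_n}(V^{\tens d})$, and the image equals $\End_{\cor\Sigma_n}(V^{\tens d})$.

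For the last assertion, if $n\ge 2d$ then every $\tau\in\Pi_{2d}$ has $|\tau|\le 2d\le n$, so all of the $D_\pi$ enter the unitriangular system above and $\{\Psi_{n,d}(D_\pi):\pi\in\Pi_{2d}\}$ is linearly independent; equivalently, extracting the $O_\tau$-coefficient of a relation $\sum_\pi c_\pi\Psi_{n,d}(D_\pi)=0$ gives $\sum_{\pi\le\tau}c_\pi=0$ for all $\tau\in\Pi_{2d}$, and invertibility over any commutative ring of the unitriangular zeta matrix of the partition lattice forces $c_\pi=0$ for all $\pi$. Thus $\Psi_{n,d}$ is injective whenever $n\ge 2d$.

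The one genuinely computational point is the composition count in the first step: correctly seeing that each middle-row component deleted in the diagram product turns into a free summation over $[1,n]$ and hence contributes precisely the scalar $\delta^{[\pi_2\ast\pi_1]}$ with $\delta=n$ is where I expect the bookkeeping to demand the most care. Everything afterward is the standard orbit-sum/double-centralizer argument together with triangularity over the partition lattice, and — as the above shows — it is in fact characteristic-free, the hypothesis $\ch(\cor)=0$ being only inherited from the cited references \cite{HR,Jones}.
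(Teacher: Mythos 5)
The paper offers no proof of this statement—it is imported verbatim from \cite[Theorem 3.6]{HR} and \cite{Jones}—and your argument is correct and is essentially the standard proof from those sources: the composition count with the matrix units $E^{\mathbf r}_{\mathbf s}$ gives the homomorphism property into the opposite algebra, and the orbit-sum/unitriangularity argument gives the image and the injectivity for $n\ge 2d$. Note that your orbit-sum operators $O_\tau$ are precisely the images $\Psi_{n,d}(x_\tau)$ of the orbit basis the paper recalls later in Theorem \ref{n<2d}, your triangular relation is exactly $D_\pi=\sum_{\pi\preceq\rho}x_\rho$, and your closing observation that the whole argument is characteristic-free is also right—the hypothesis $\ch(\cor)=0$ is inherited from the cited references rather than used anywhere in this proof.
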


Note that 
the restriction of $\Psi_{n,d}$ onto ${\sym_d^\opp}$  is equal to \eqref{eq:right action of S_d}. 

Hence combining the Theorem \ref{swdsg}  with Theorem \ref{thm:dim}, we obtain
\begin{theorem} \rm
Assume that   $\cor$ is a  field with  $\ch (\cor)=0$.

If $n \ge 2d$, then
$$\dim_{\cor}P^d(M_n(\cor))^{\Sigma_n}=\dim_{\cor}\Cent_{\Par_d(n)}(\cor\sym_d).
$$
\end{theorem}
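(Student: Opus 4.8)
The plan is to combine the three ingredients that have just been established. First, Theorem \ref{thm:dim} gives, for a field $\cor$ with $\ch(\cor)=0$ and any subgroup $G\le GL_n(\cor)$, the equality
\[
\dim_\cor P^d(M_n(\cor))^G=\dim_\cor\Cent_{\End_{\Phi_{n,d}(\cor G)}(V^{\tens d})}\bigl(\Psi_{n,d}(\cor\sym_d^{\opp})\bigr).
\]
I would specialize this to $G=\Sigma_n$, the group of permutation matrices. The right-hand side then involves the algebra $\End_{\Phi_{n,d}(\cor\Sigma_n)}(V^{\tens d})$, which by definition is the same as $\End_{\cor\Sigma_n}(V^{\tens d})$, the algebra of $\Sigma_n$-module endomorphisms of $V^{\tens d}$.

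Next I would invoke Schur--Weyl duality for $\Sigma_n$ (Theorem \ref{swdsg}): since $\ch(\cor)=0$ and $n\ge 2d$, the map $\Psi_{n,d}\col\Par_d(n)^\opp\to\End(V^{\tens d})$ is an \emph{injective} algebra homomorphism with image exactly $\End_{\cor\Sigma_n}(V^{\tens d})$. Hence $\Psi_{n,d}$ restricts to an isomorphism of $\cor$-algebras $\Par_d(n)^\opp\isoto\End_{\cor\Sigma_n}(V^{\tens d})$. An algebra isomorphism carries centralizers to centralizers: if $\theta\col A\isoto B$ is an isomorphism and $S\subseteq A$ a subset, then $\theta(\Cent_A(S))=\Cent_B(\theta(S))$. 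Applying this with $A=\Par_d(n)^\opp$, $B=\End_{\cor\Sigma_n}(V^{\tens d})$, $\theta=\Psi_{n,d}$, and $S$ the image of $\cor\sym_d$ inside $\Par_d(n)$, and using that the restriction of $\Psi_{n,d}$ to $\sym_d^\opp$ is precisely the place-permutation action \eqref{eq:right action of S_d} (as noted just before the statement), I get
\[
\Psi_{n,d}\bigl(\Cent_{\Par_d(n)^\opp}(\cor\sym_d^\opp)\bigr)=\Cent_{\End_{\cor\Sigma_n}(V^{\tens d})}\bigl(\Psi_{n,d}(\cor\sym_d^\opp)\bigr),
\]
so these two algebras have the same dimension.

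Finally I would observe that $\Cent_{\Par_d(n)^\opp}(\cor\sym_d^\opp)$ and $\Cent_{\Par_d(n)}(\cor\sym_d)$ have the same dimension: passing to the opposite algebra is a vector-space identity $(\Par_d(n)^\opp=\Par_d(n)$ as sets, with $\cor\sym_d^\opp=\cor\sym_d$ as the same subspace$)$, and an element $x$ commutes with every $s\in\cor\sym_d$ in $\Par_d(n)$ if and only if it commutes with every $s$ in $\Par_d(n)^\opp$, since $xs=sx$ in $A$ is equivalent to $s\cdot^\opp x=x\cdot^\opp s$ in $A^\opp$. Chaining the three equalities of dimensions yields
\[
\dim_\cor P^d(M_n(\cor))^{\Sigma_n}=\dim_\cor\Cent_{\Par_d(n)}(\cor\sym_d),
\]
which is the claim. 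There is no serious obstacle here; the proof is a bookkeeping assembly of Theorem \ref{thm:dim} and Theorem \ref{swdsg}. The only point requiring a little care is the opposite-algebra matching between the centralizer appearing in Theorem \ref{thm:dim} (phrased with $\cor\sym_d^\opp$ acting on the right) and the centralizer in the statement (phrased inside $\Par_d(n)$ with $\cor\sym_d$), together with keeping track of the injectivity hypothesis $n\ge 2d$, which is exactly what makes $\Psi_{n,d}$ an isomorphism rather than merely a surjection.
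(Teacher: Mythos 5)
Your proposal is correct and follows exactly the route the paper takes: the paper's entire proof is the sentence ``combining Theorem \ref{swdsg} with Theorem \ref{thm:dim}, we obtain'' the stated equality, and your write-up simply fills in the bookkeeping (identifying $\End_{\Phi_{n,d}(\cor\Sigma_n)}(V^{\tens d})$ with $\End_{\cor\Sigma_n}(V^{\tens d})$, transporting the centralizer along the isomorphism $\Psi_{n,d}$ furnished by $n\ge 2d$, and matching the opposite-algebra conventions). No gaps.
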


\subsection{Multidigraphs and the Centralizer of $\sym_d$ in $\Par_d(\delta)$}
\vskip 1em

\begin{definition} \rm
A \emph{directed multigraph}, shortly a \emph{multidigraph}, is an unlabeled graph  that is made of   countably many unlabeled  vertices and a set of edges  with  directions, called \emph{arrows}. 
We call a multidigraph is vertex-labeled (respectively, edge-labeled) if there is a function from the set of vertices (respectively, the set of edges) to a set of labels. 
\end{definition}

We  will associate  three  multidigraphs $\widetilde \psi_d(\pi), \psi_d(\pi)$ and $\phi_d(\pi)$ to a set partition $\pi \in \Pi_{2d}$ as follows: Let $\widetilde \psi_d(\pi)$ be the vertex-labeled and edge-labeled multidigraph  where (1) the vertices are labeled by the blocks of $\pi$, and (2) the multidigraph has $d$ arrows such that  for each $1\le i \le d$, there is exactly one arrow with the label $i$ which starts from the vertex containing $i$ and ends at the vertex containing $i'$.
Let $\psi_d(\pi)$ be the edge-labeled multidigraph obtained by  removing the labels of the vertices of $\widetilde \psi_d(\pi)$. Finally, removing the edge labels of $\psi_d(\pi)$, we obtain a  unlabeled multidigraph $\phi_d(\pi)$.

We define $\LG_d$ the set of edge-labeled mutidigraphs of $d$ arrows which do not contain isolated vertices and are labeled by  the set $\{1, 2, \dots , d\}$ bijectively.  
Also, let $\ULG_d$ be the set of multidigraphs with $d$ arrows which do not contain isolated vertices.

Then,  the map $\psi_d:\Pi_{2d} \to\LG_d$ is bijective and its inverse is given as follows:
\begin{enumerate}[(i)]
\item For a given  multidigraph in $\LG_d$, label all the vertices with the empty sets.  
\item For each $1\le i \le d$, add $i$ to the source  and $i'$ to the target of the arrow with the label $i$. 
The label of a vertex of the graph forms a block of $\psi_d^{-1}(g)$.
\end{enumerate}

\vskip 1em
\begin{theorem} \rm
\label{thm:action}
For $\sigma \in \sym_d$ and $\pi \in \Pi_{2d}$,
the multidigraph  $\psi_d(\sigma \ast \pi \ast \sigma^{-1} )$ is obtained from $\psi_d(\pi )$ by permuting the arrow labels by $\sigma$.
\end{theorem}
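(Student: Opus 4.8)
The plan is to make the conjugation action completely explicit at the level of set partitions first, and then read off its effect on the associated multidigraph directly from the construction of $\psi_d$. For $\sigma\in\sym_d$ let $\widehat\sigma\in\sym_{2d}$ be the permutation determined by $\widehat\sigma(i)=\sigma(i)$ and $\widehat\sigma(i')=\sigma(i)'$ for $1\le i\le d$, and for $\pi\in\Pi_{2d}$ write $\widehat\sigma(\pi)$ for the set partition of $[1,2d]$ whose blocks are the sets $\widehat\sigma(B)$, $B$ a block of $\pi$. The first thing I would prove is
\[
\sigma\ast\pi\ast\sigma^{-1}=\widehat\sigma(\pi)\qquad(\sigma\in\sym_d,\ \pi\in\Pi_{2d}).
\]
Since a permutation diagram $D_\sigma$ has no edge joining two vertices of a single row, forming $D_\sigma D_\pi$ (with $D_\sigma$ on top) removes no block from the middle row and has the sole effect of renaming the top vertex at position $i$ of $D_\pi$ as the top vertex at position $\sigma(i)$; symmetrically, $D_\pi D_{\sigma^{-1}}$ renames the bottom vertex at position $i$ of $D_\pi$ as the bottom vertex at position $\sigma(i)$. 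Combining the two renamings and using the dictionary ``bottom position $i\leftrightarrow i$, top position $i\leftrightarrow i'$'' between the $2d$ vertices of the diagram and $[1,2d]$, the displayed identity follows from \eqref{eq:action_on_diagram}.

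With this in hand I would transport the statement to multidigraphs. Fix $\pi$ and, for $x\in[1,2d]$, write $B(x)$ for the block of $\pi$ containing $x$. By the identity above, the vertices of $\psi_d(\sigma\ast\pi\ast\sigma^{-1})=\psi_d(\widehat\sigma(\pi))$ are exactly the sets $\widehat\sigma(B)$ with $B$ a block of $\pi$, so $B\mapsto\widehat\sigma(B)$ is a bijection from the vertex set of $\psi_d(\pi)$ onto that of $\psi_d(\sigma\ast\pi\ast\sigma^{-1})$. I claim this bijection takes the arrow labeled $i$ to the arrow labeled $\sigma(i)$. Indeed, by the construction of $\psi_d$ the arrow labeled $i$ of $\psi_d(\pi)$ runs from $B(i)$ to $B(i')$, while the arrow labeled $\sigma(i)$ of $\psi_d(\widehat\sigma(\pi))$ runs from the block of $\widehat\sigma(\pi)$ containing $\sigma(i)$ to the block of $\widehat\sigma(\pi)$ containing $\sigma(i)'$; since $\sigma(i)=\widehat\sigma(i)$ and $\sigma(i)'=\widehat\sigma(i')$, these two blocks are precisely $\widehat\sigma(B(i))$ and $\widehat\sigma(B(i'))$, i.e. the images of the source and target of the arrow labeled $i$ under $B\mapsto\widehat\sigma(B)$.

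Putting the two steps together exhibits an isomorphism of multidigraphs $\psi_d(\pi)\xrightarrow{\ \sim\ }\psi_d(\sigma\ast\pi\ast\sigma^{-1})$ carrying, for each $i$, the arrow labeled $i$ to the arrow labeled $\sigma(i)$; equivalently, $\psi_d(\sigma\ast\pi\ast\sigma^{-1})$ is obtained from $\psi_d(\pi)$ by permuting the arrow labels by $\sigma$, which is the assertion. (One could equally argue through the inverse map $\psi_d^{-1}$, describing a labeled multidigraph by its ``outgoing/incoming'' data at each vertex and observing that relabeling arrows by $\sigma$ relabels that data by $\widehat\sigma$; this reduces to the same identity.) I expect the only genuinely delicate point to be the identity $\sigma\ast\pi\ast\sigma^{-1}=\widehat\sigma(\pi)$ — specifically, pinning down that conjugation by $\sigma$ is implemented by the relabeling that acts as $\sigma$ (and not $\sigma^{-1}$) on each of the two rows, since the paper works with the opposite algebra $\Par_d(\delta)^{\mathrm{op}}$ and with right actions in several places. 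This is settled once and for all by unwinding the definition of the diagram product $D_{\pi_1}D_{\pi_2}$; after that, everything else is routine bookkeeping.
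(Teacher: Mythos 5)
Your proof is correct and follows essentially the same route as the paper: both come down to unwinding the definition of the diagram product to see that conjugation by $\sigma$ relabels the elements of $[1,2d]$ by $\widehat\sigma$ (acting as $\sigma$ on both rows) and hence permutes the arrow labels of $\psi_d(\pi)$. The only difference is organizational --- the paper reduces to simple transpositions and tracks the four affected blocks, whereas you establish the global identity $\sigma\ast\pi\ast\sigma^{-1}=\widehat\sigma(\pi)$ for arbitrary $\sigma$ directly; your row-by-row bookkeeping for $D_\sigma D_\pi$ and $D_\pi D_{\sigma^{-1}}$ checks out.
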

\begin{proof}
 We may assume that $\sigma$  is a simple transposition $ (i ~ i+1)$ for some $1\le i\le d-1$. 
Let $b_1, b'_1, b_2$ and $b'_2$ be the blocks of $\pi$ containing $i,  i', i+1$ and $(i+1)'$ respectively. 
Note that the set partition $ \sigma \ast \pi$ is obtained from $\pi$ 
by changing the blocks $b_1$ and $b_2$ into $c_1:=(b_1\setminus \{i\})\cup \{i+1\}$ and 
$c_2:=(b_2\setminus \{i+1\})\cup \{i\}$, respectively.  Similarly the set partition 
 $ \pi \ast \sigma^{-1}$ is obtained from $\pi$ 
by changing the blocks $b'_1$ and $b'_2$ into $c'_1:=(b'_1\setminus \{i'\})\cup \{(i+1)'\}$ and 
$c'_2:=(b_2\setminus \{(i+1)'\})\cup \{i'\}$, respectively.
Thus the set partition $\sigma*\pi*\sigma^{-1}$ is obtained from $\pi$ 
by changing the blocks $b_1, b'_1, b_2$ and $b'_2$ into the subsets $c_1,c'_1,c_2$ and $c'_2$, respectively, and all the other blocks remain the same.
It follows that the multidigraph $\psi_d(\sigma*\pi*\sigma^{-1})$ is obtained from $\psi_d(\pi)$ by exchanging the arrows with labels $i$ and $i+1$, as desired.
\end{proof}

The following is an immediate consequence of Theorem \ref{thm:action}.

\vskip 1em
\begin{corollary} \rm
\label{parbasis}
Two set partitions $\pi_1, \pi_2$ are $\sym_d$-conjugate if and only if $\phi_d(\pi_1)=\phi_d(\pi_2)$.
\end{corollary}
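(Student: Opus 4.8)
The plan is to deduce Corollary~\ref{parbasis} directly from Theorem~\ref{thm:action} together with the fact that $\psi_d\colon\Pi_{2d}\to\LG_d$ is a bijection and that $\phi_d$ is obtained from $\psi_d$ by forgetting the edge labels. The key observation is that forgetting the edge labels of an edge-labeled multidigraph identifies exactly those edge-labeled multidigraphs that differ by a relabeling of the arrows; that is, for $g_1,g_2\in\LG_d$ one has (underlying unlabeled multidigraph of $g_1$) $=$ (underlying unlabeled multidigraph of $g_2$) if and only if $g_2$ is obtained from $g_1$ by permuting the arrow labels by some $\sigma\in\sym_d$.

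First I would record this observation about $\LG_d$: since every element of $\LG_d$ has its $d$ arrows labeled bijectively by $\{1,\dots,d\}$, a multidigraph isomorphism between the underlying unlabeled graphs of $g_1$ and $g_2$ is the same data as a bijection of arrow-label sets, i.e.\ an element of $\sym_d$, carrying $g_1$ to $g_2$. Hence $\phi_d(\pi_1)=\phi_d(\pi_2)$, which by definition means the unlabeled multidigraphs underlying $\psi_d(\pi_1)$ and $\psi_d(\pi_2)$ coincide, holds if and only if there is $\sigma\in\sym_d$ with $\psi_d(\pi_2)$ equal to $\psi_d(\pi_1)$ with its arrow labels permuted by $\sigma$.

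Next I would invoke Theorem~\ref{thm:action}, which says precisely that permuting the arrow labels of $\psi_d(\pi_1)$ by $\sigma$ yields $\psi_d(\sigma\ast\pi_1\ast\sigma^{-1})$. Combining the two steps: $\phi_d(\pi_1)=\phi_d(\pi_2)$ if and only if $\psi_d(\pi_2)=\psi_d(\sigma\ast\pi_1\ast\sigma^{-1})$ for some $\sigma\in\sym_d$; and since $\psi_d$ is injective, this is equivalent to $\pi_2=\sigma\ast\pi_1\ast\sigma^{-1}$ for some $\sigma$, i.e.\ $\pi_1$ and $\pi_2$ are $\sym_d$-conjugate.

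The only genuine point requiring care—really the ``main obstacle,'' though it is mild—is making the observation in the first paragraph rigorous: one must check that an isomorphism of the unlabeled multidigraphs underlying two elements of $\LG_d$ necessarily comes from a permutation of the arrow labels, using the hypothesis that both have no isolated vertices and have arrows labeled bijectively by $\{1,\dots,d\}$. This is straightforward because an unlabeled multidigraph isomorphism is a bijection on vertices together with a compatible bijection on arrows, and the latter bijection, transported through the two bijective labelings $\{1,\dots,d\}\to\text{arrows}$, is the desired $\sigma\in\sym_d$; conversely any $\sigma$ clearly induces such an isomorphism. Everything else is a formal chaining of equivalences using the bijectivity of $\psi_d$ and Theorem~\ref{thm:action}.
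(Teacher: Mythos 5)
Your argument is correct and is exactly the route the paper intends: the paper offers no written proof, simply declaring the corollary ``an immediate consequence of Theorem~\ref{thm:action},'' and your chain of equivalences (equality of unlabeled multidigraphs $\Leftrightarrow$ existence of an arrow-label permutation $\sigma$, then Theorem~\ref{thm:action} plus injectivity of $\psi_d$ to translate that into $\sym_d$-conjugacy) is precisely the spelled-out version of that implication. The care you take with the ``forgetting labels identifies graphs up to relabeling'' step is appropriate and introduces no gap.
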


For a multidigraph $G \in \ULG_d$, set   
$$E_G:=\{ \pi \in \Pi_{2d} \, | \, \phi_d(\pi)=G\}, \quad \text{and} \quad \gamma_G:=\sum \limits_{\pi \in E_G} D_\pi.$$ That is,  each $E_G$ is an orbit in $\Pi_{2d}$ of the conjugation action of $\sym_d$, and $\gamma_G$ is the sum of the diagram basis elements  in the orbit $E_G$.

\begin{corollary} \rm \label{cor:dim=num.of.mutlidigraph}
The set $$\Gamma_d:=\{ \gamma_G  \in \Par_d(\delta) \, | \, G \in \ULG_d\}$$  
forms  a basis for $\Cent_{\Par_d(\delta)}(\cor\sym_d)$, the centralizer of $\cor\sym_d$ in $\Par_d(\delta)$.  
In particular,  the dimension of $\Cent_{\Par_d(\delta)}(\cor\sym_d)$ is equal to $\#  \, \ULG_d$, the number of unlabeled multidigraphs with $d$ arrows and no isolated vertices. The dimension $\dim_\cor \Cent_{\Par_d(\delta)}(\cor\sym_d)$ is independent from the choice of $\delta$.
\end{corollary}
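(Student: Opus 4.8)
The plan is to identify $\Cent_{\Par_d(\delta)}(\cor\sym_d)$ with the space of fixed points of the conjugation action of $\sym_d$ on $\Par_d(\delta)$, and then to extract a basis directly from the diagram basis $\beta_d$ via the orbit decomposition of $\Pi_{2d}$.

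First I would note that under the embedding $\cor\sym_d \hookrightarrow \Par_d(\delta)$ recalled in the previous subsection, each $D_\sigma$ with $\sigma \in \sym_d$ is a unit: the composition of two permutation diagrams deletes no middle block, so $D_\sigma D_{\sigma^{-1}} = D_{\mathrm{id}}$ is the identity of $\Par_d(\delta)$, giving $D_\sigma^{-1} = D_{\sigma^{-1}}$. By linearity an element $x \in \Par_d(\delta)$ commutes with every element of $\cor\sym_d$ if and only if $D_\sigma x = x D_\sigma$ for all $\sigma \in \sym_d$, equivalently $D_\sigma x D_{\sigma^{-1}} = x$ for all $\sigma$. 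Thus $\Cent_{\Par_d(\delta)}(\cor\sym_d)$ is precisely the subspace of elements of $\Par_d(\delta)$ fixed by conjugation by $\sym_d$.

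Next, writing $x = \sum_{\pi \in \Pi_{2d}} a_\pi D_\pi$ in the diagram basis, formula \eqref{eq:action_on_diagram} gives $D_\sigma x D_{\sigma^{-1}} = \sum_{\pi} a_\pi D_{\sigma.\pi} = \sum_{\pi} a_{\sigma^{-1}.\pi} D_\pi$. Since $\beta_d$ is a basis, $x$ is conjugation-invariant if and only if $a_\pi = a_{\sigma^{-1}.\pi}$ for all $\sigma \in \sym_d$ and all $\pi$, i.e. the function $\pi \mapsto a_\pi$ is constant on every $\sym_d$-orbit in $\Pi_{2d}$. Consequently the orbit sums $\sum_{\pi \in E} D_\pi$, as $E$ ranges over the $\sym_d$-orbits of $\Pi_{2d}$, form a basis of $\Cent_{\Par_d(\delta)}(\cor\sym_d)$.

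Finally I would invoke Corollary \ref{parbasis}: two set partitions lie in the same $\sym_d$-orbit if and only if $\phi_d$ carries them to the same element of $\ULG_d$. Hence the orbits $E_G$ are indexed bijectively by $G \in \ULG_d$, and the associated orbit sums are exactly the $\gamma_G$; this shows $\Gamma_d$ is a basis and $\dim_\cor \Cent_{\Par_d(\delta)}(\cor\sym_d) = \#\,\ULG_d$. Since neither the combinatorial data $\ULG_d$ nor the map $\phi_d$ involves $\delta$, and the argument above is valid for every $\delta \in \cor$, the dimension is independent of $\delta$. There is no real obstacle here; the only points needing a line of care are the invertibility of the $D_\sigma$ inside $\Par_d(\delta)$ and the (routine) reduction from centralizing all of $\cor\sym_d$ to centralizing the units $D_\sigma$.
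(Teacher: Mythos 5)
Your proposal is correct and follows essentially the same route as the paper: expand in the diagram basis, use \eqref{eq:action_on_diagram} to see that conjugation by $D_\sigma$ permutes the coefficients along $\sym_d$-orbits of $\Pi_{2d}$, and identify the orbits with $\ULG_d$ via Corollary \ref{parbasis}. The only difference is that you spell out the invertibility of the $D_\sigma$ and the reduction to conjugation-invariance, which the paper leaves implicit.
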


\begin{proof}
Since $\{D_\pi\}_{\pi \in \Pi_{2d}}$ is a basis for $\Par_d(\delta)$, the set $\Gamma_d$ is linearly independent over $\cor$ by definition. 

Write an element $v$ in $\Par_d(\delta)$ as $v=\sum_{\pi \in \Pi_{2d}}a_\pi D_\pi$ for some $a_\pi \in \cor$.
For any element $\sigma \in \sym_{d}$, we have 
\eqn
D_\sigma v D_{\sigma^{-1}} = \sum_{\pi \in \Pi_{2d}}a_\pi D_\sigma D_\pi D_{\sigma^{-1}} 
= \sum_{\pi \in \Pi_{2d}} a_\pi  D_{\sigma \ast  \pi \ast  \sigma^{-1}}
= \sum_{\pi \in \Pi_{2d}} a_{\sigma^{-1}  \ast \pi  \ast \sigma}  D_{\pi}.
\eneqn
Hence  $v=D_\sigma v D_{\sigma^{-1}} $ if and only if $a_{\sigma^{-1}  \ast \pi  \ast \sigma} = a_\pi$ for all $\pi \in \Pi_{2d}$.
Thus for $v \in \Cent_{\Par_d(\delta)}(\cor\sym_d)$, we have
\eqn
v=\sum_{G\in \ULG_{d} }a_{G} \  \gamma_G
\eneqn
for some $a_G \in \cor$, 
as desired.
\end{proof}

\begin{corollary} \rm \label{cor:main_result_for_n_ge_2d}
Let $\cor$ be an infinite field. If $n \ge 2d$, 
 then
the dimension of the space $P^d(M_n(\cor))^{\Sigma_n}$ is equal to $\# \, \ULG_{d}$, the number of multidigraphs with  $d$ arrows and  no isolated vertices.
In particular,  the sequence $\left( \dim_{\cor}P^d(M_n(\cor))^{\Sigma_n} \right)_{n =1}^{\infty}$ is  stable for $n \ge 2d$.
\end{corollary}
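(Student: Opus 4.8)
The plan is to combine the two results already assembled in the excerpt. By Theorem~\ref{thm:dim} applied to the subgroup $G = \Sigma_n$ of $GL_n(\cor)$, whenever $\ch(\cor)=0$ we have
$$\dim_\cor P^d(M_n(\cor))^{\Sigma_n} = \dim_\cor \Cent_{\End_{\Phi_{n,d}(\cor\Sigma_n)}(V^{\tens d})}\bigl(\Psi_{n,d}(\cor\sym_d^{\rm op})\bigr).$$
The point is to recognize the right-hand side as $\dim_\cor \Cent_{\Par_d(n)}(\cor\sym_d)$ when $n\ge 2d$. By Theorem~\ref{swdsg}, for $\ch(\cor)=0$ the map $\Psi_{n,d}$ is an algebra homomorphism $\Par_d(n)^\opp \to \End(V^{\tens d})$ with image $\End_{\Phi_{n,d}(\cor\Sigma_n)}(V^{\tens d}) = \End_{\cor\Sigma_n}(V^{\tens d})$, and it is injective precisely when $n\ge 2d$. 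Hence for $n\ge 2d$ it is an algebra isomorphism $\Par_d(n)^\opp \isoto \End_{\cor\Sigma_n}(V^{\tens d})$; taking opposite algebras (which does not change dimensions of centralizer subalgebras) and using that $\Psi_{n,d}$ restricted to $\sym_d^\opp$ is exactly the place-permutation action of~\eqref{eq:right action of S_d}, so that $\Psi_{n,d}(\cor\sym_d^\opp)$ corresponds to the copy of $\cor\sym_d \subseteq \Par_d(n)$ described in~\eqref{eq:action_on_diagram}, we get
$$\dim_\cor \Cent_{\End_{\cor\Sigma_n}(V^{\tens d})}\bigl(\Psi_{n,d}(\cor\sym_d^{\rm op})\bigr) = \dim_\cor \Cent_{\Par_d(n)}(\cor\sym_d).$$

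Next I would invoke Corollary~\ref{cor:dim=num.of.mutlidigraph}, which states that $\dim_\cor \Cent_{\Par_d(\delta)}(\cor\sym_d) = \#\,\ULG_d$ for every choice of parameter $\delta$, in particular for $\delta = n$. Chaining the three equalities yields $\dim_\cor P^d(M_n(\cor))^{\Sigma_n} = \#\,\ULG_d$ for all $n\ge 2d$. Since the cardinality $\#\,\ULG_d$ does not depend on $n$, the sequence $\bigl(\dim_\cor P^d(M_n(\cor))^{\Sigma_n}\bigr)_{n=1}^{\infty}$ is constant, hence stable, on the range $n\ge 2d$; this gives the ``in particular'' clause.

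There is essentially no serious obstacle here: the corollary is a direct assembly of Theorem~\ref{thm:dim}, Theorem~\ref{swdsg}, and Corollary~\ref{cor:dim=num.of.mutlidigraph}. The one point that requires a line of care is the bookkeeping with opposite algebras and with the two (a priori different) embeddings of $\cor\sym_d$ --- one as $\Psi_{n,d}(\cor\sym_d^\opp) \subseteq \End(V^{\tens d})$ via place permutation, the other as $\cor\sym_d \subseteq \Par_d(n)$ via diagrams of the form $D_\sigma$ --- and checking that the isomorphism $\Psi_{n,d}^\opp$ identifies them, so that the centralizer of one maps isomorphically onto the centralizer of the other. This is exactly the content already recorded in the unnumbered remark following Theorem~\ref{swdsg} (that $\Psi_{n,d}|_{\sym_d^\opp}$ equals~\eqref{eq:right action of S_d}) together with~\eqref{eq:action_on_diagram}, so the proof is short. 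Finally, one should remark that the proof genuinely uses $n\ge 2d$ only to guarantee injectivity of $\Psi_{n,d}$; the case $n<2d$ is handled separately later (Corollary~\ref{cor:main_result_for_n_<_2d}) via the compatible basis $\{x_\pi\}$.
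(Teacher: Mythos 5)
Your argument is correct as far as it goes, and for $\ch(\cor)=0$ it is exactly the paper's proof: Theorem \ref{thm:dim} with $G=\Sigma_n$, Theorem \ref{swdsg} to identify $\End_{\cor\Sigma_n}(V^{\tens d})$ with $\Par_d(n)^\opp$ when $n\ge 2d$ (and to match the two copies of $\cor\sym_d$), and then Corollary \ref{cor:dim=num.of.mutlidigraph}. However, the corollary is stated for an \emph{arbitrary} infinite field $\cor$, with no restriction on the characteristic, and your proof only establishes it when $\ch(\cor)=0$. Both of your main inputs genuinely need that hypothesis: Theorem \ref{thm:dim} requires $\ch(\cor)=0$ or $\ch(\cor)>d$, and Theorem \ref{swdsg} (Schur--Weyl duality for $\Sigma_n$ via the partition algebra) is only stated for $\ch(\cor)=0$. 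So as written there is a gap for positive characteristic.

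The paper closes this gap with one additional observation that your proposal is missing: the conjugation action of $\Sigma_n$ on $M_n(\cor)$ permutes the standard basis of matrix units, so $P(M_n(\cor))^{\Sigma_n}$ is the invariant ring of a \emph{permutation representation} of a finite group, and the Hilbert series of such an invariant ring is independent of the ground field (see \cite[Corollary 3.2.2]{NS}). Hence the dimension computed in characteristic $0$ is automatically the dimension over every infinite field, which completes the proof. You should add this reduction; without it your argument proves a strictly weaker statement. (A very minor further point: Theorem \ref{swdsg} asserts injectivity of $\Psi_{n,d}$ \emph{if} $n\ge 2d$, not ``precisely when,'' though only that direction is needed here.)
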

\begin{proof}
If $\ch(\cor) =0$, then we have
\eqn \dim_\cor P^d(M_n(\cor))^{\Sigma_n} = 
\dim_\cor \Cent_{\End_{\Phi_{n,d}(\cor \Sigma_n)}(V^{\tens d})}\left( \Psi_{n,d}(\cor \sym_d^{\rm op}) \right) = \dim_\cor \Cent_{\Par_d(n)} \left( \cor \sym_d\right)
\eneqn
by Theorem \ref{thm:dim} and Theorem \ref{swdsg}. Hence we get the desired result by Corollary
\ref{cor:dim=num.of.mutlidigraph}.
Since $P(M_n(\cor))^{\Sigma_n}$ is the invariant ring associated with the  permutation representation induced from the conjugation action of $\Sigma_n$ on the standard basis of $M_n(\cor)$, its Hilbert series is independent of the ground field $\cor$ (see, for example  \cite[Corollary 3.2.2]{NS}). 
\end{proof}

\vskip 1em

\subsection{The Orbit basis and the Centralizer of $\sym_d$ in $\Par_d(\delta)$ when $n<2d$}
In this subsection, we recall the \emph{orbit basis}  of $\Par_d(\delta)$ following \cite[Section 2.3]{BH1}.
The set $\Pi_{2d}$ of set partitions of $[1, 2d]$ forms a lattice  under the partial order given by 
\begin{center}
$\pi \preceq \rho$ if every block of $\pi$ is contained in a block of $\rho$. 
\end{center}
In this case we say that $\pi$ is a \emph{refinement of $\rho$} and that $\rho$ is a \emph{coarsening of $\pi$}. 

 The \emph{orbit basis}  $\{ x_\pi  \, | \, \pi \in \Pi_{2d} \}$ of $\Par_d(\delta)$ is defined by the following coarsening relation with respect to the diagram basis: 
\eqn D_\pi :=\displaystyle \sum_{\pi \preceq \rho} x_\rho.
\eneqn 
Then
\eq \label{eq:expan_orbit}
x_\pi =\displaystyle \sum_{\pi \preceq \rho} \mu_{2d}(\pi,\rho) D_\rho
\eneq
for some integers $\mu_{2d}(\pi,\rho)$ each of which
 satisfies that 
\eq \label{eq:action_on_mobius}
\mu_{2d}(\pi,\rho) = \mu_{2d}(\sigma*\pi*\sigma',\sigma*\rho*\sigma') \qquad \text{for }  \ \sigma, \sigma' \in \sym_d.
\eneq
See, for example, an explicit formula for $\mu_{2d}(\pi, \rho)$ in \cite[(2.18)]{BH1}.

It follows by \eqref{eq:action_on_diagram},  \eqref{eq:expan_orbit}  and \eqref{eq:action_on_mobius} that 
\begin{equation} \label{eq:action_on_orbit}
D_\sigma x_{\pi} D_{\sigma'} = x_{\sigma * \pi  *\sigma'}.
\end{equation}
for $\sigma, \sigma' \in \sym_{d}$ and  $\pi \in\Pi_{d}$ (\cite[Section 4.1]{BH1}).

The orbit basis is interesting, since it is  compatible with the homomorphism $\Psi_{n,d}$ in the following sense.

\begin{theorem} \label{n<2d} \rm (\cite[Theorem 3.8(a)]{BH1}, \cite[Theorem 3.6]{HR}) 
\begin{enumerate}[(i)]
\item The set $\set{ \Psi_{n, d}(x_\pi) }{ \pi \in \Pi_{2d}, \ |\pi| \le n }$ forms a basis of $\Im (\Psi_{n, d})=\End_{\cor\sym_d}(V^{\tens d})$. 
\item The set $\set{ x_\pi }{ \pi \in \Pi_{2d}, |\pi| > n \ }$ forms a basis of  $\Ker (\Psi_{n, d})$.
\end{enumerate}
\end{theorem}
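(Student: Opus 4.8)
The plan is to compute $\Psi_{n,d}(x_\pi)$ explicitly in the standard basis $\set{E_{\bold r'}^{\bold r}}{\bold r,\bold r'\in[1,n]^d}$ of $\End(V^{\tens d})$ and to read off both assertions from the resulting combinatorics. For a pair $(\bold r,\bold r')\in[1,n]^d\times[1,n]^d$, write $(\bold r,\bold r')=(u_1,\dots,u_{2d})$ with $\bold r=(u_1,\dots,u_d)$ and $\bold r'=(u_{d+1},\dots,u_{2d})$, let $\tau(\bold r,\bold r')\in\Pi_{2d}$ be the set partition whose blocks are the fibres of $i\mapsto u_i$, and set $N_d^{=}(\rho)\seteq\set{(\bold r,\bold r')}{\tau(\bold r,\bold r')=\rho}$. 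Directly from the definition of $N_d(\cdot)$ one has $N_d(\rho)=\bigsqcup_{\tau\succeq\rho}N_d^{=}(\tau)$, so $\Psi_{n,d}(D_\rho)=\sum_{\tau\succeq\rho}\sum_{(\bold r,\bold r')\in N_d^{=}(\tau)}E_{\bold r'}^{\bold r}$. Substituting this into the Möbius expansion \eqref{eq:expan_orbit} and interchanging the two outer sums,
\[
\Psi_{n,d}(x_\pi)=\sum_{\tau\succeq\pi}\Bigl(\sum_{\pi\preceq\rho\preceq\tau}\mu_{2d}(\pi,\rho)\Bigr)\sum_{(\bold r,\bold r')\in N_d^{=}(\tau)}E_{\bold r'}^{\bold r}=\sum_{(\bold r,\bold r')\in N_d^{=}(\pi)}E_{\bold r'}^{\bold r},
\]
since the bracketed sum collapses to $\delta_{\pi,\tau}$ by the defining property of the Möbius function of the set-partition lattice.

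Granting this formula, both statements follow quickly. The set $N_d^{=}(\pi)$ is nonempty exactly when one can assign pairwise distinct labels in $[1,n]$ to the $|\pi|$ blocks of $\pi$, i.e.\ exactly when $|\pi|\le n$; hence $\Psi_{n,d}(x_\pi)=0$ for $|\pi|>n$, whence $x_\pi\in\Ker\Psi_{n,d}$, and $\Psi_{n,d}(x_\pi)\ne0$ for $|\pi|\le n$. Since the sets $N_d^{=}(\pi)$ are pairwise disjoint, the elements $\set{\Psi_{n,d}(x_\pi)}{|\pi|\le n}$ have pairwise disjoint supports in the basis $\set{E_{\bold r'}^{\bold r}}{\bold r,\bold r'\in[1,n]^d}$ of $\End(V^{\tens d})$, hence are linearly independent; likewise $\set{x_\pi}{|\pi|>n}$, being part of the orbit basis of $\Par_d(n)$, is linearly independent and lies in $\Ker\Psi_{n,d}$. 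Now count: with $N\seteq\#\set{\pi\in\Pi_{2d}}{|\pi|\le n}$, the span of $\set{x_\pi}{|\pi|>n}$ has dimension $\#\Pi_{2d}-N$ inside $\Ker\Psi_{n,d}$, and the span of $\set{\Psi_{n,d}(x_\pi)}{|\pi|\le n}$ has dimension $N$ inside $\Im\Psi_{n,d}$. Adding and invoking rank--nullity, $(\#\Pi_{2d}-N)+N=\#\Pi_{2d}=\dim\Par_d(n)=\dim\Ker\Psi_{n,d}+\dim\Im\Psi_{n,d}$ forces equality throughout: $\set{x_\pi}{|\pi|>n}$ is a basis of $\Ker\Psi_{n,d}$, which is (ii), and $\set{\Psi_{n,d}(x_\pi)}{|\pi|\le n}$ is a basis of $\Im\Psi_{n,d}=\End_{\cor\Sigma_n}(V^{\tens d})$, the last identification being Theorem \ref{swdsg}, which is (i).

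I expect the one genuinely delicate point to be the computation of $\Psi_{n,d}(x_\pi)$ in the first paragraph: one must verify carefully that $N_d(\rho)=\bigsqcup_{\tau\succeq\rho}N_d^{=}(\tau)$, that the integers $\mu_{2d}(\pi,\rho)$ of \eqref{eq:expan_orbit} are precisely the Möbius numbers of the intervals $[\pi,\rho]$ in the set-partition lattice (so that $\sum_{\pi\preceq\rho\preceq\tau}\mu_{2d}(\pi,\rho)=\delta_{\pi,\tau}$), and keep the double summation straight. An alternative that sidesteps Möbius inversion is to define $y_\pi\seteq\sum_{(\bold r,\bold r')\in N_d^{=}(\pi)}E_{\bold r'}^{\bold r}$, observe that $\Psi_{n,d}(D_\pi)=\sum_{\rho\succeq\pi}y_\rho$ by the displayed decomposition of $N_d(\pi)$, and then deduce $\Psi_{n,d}(x_\pi)=y_\pi$ by comparing with $D_\pi=\sum_{\rho\succeq\pi}x_\rho$ and the triangularity of this change of basis; everything after the formula for $\Psi_{n,d}(x_\pi)$ is elementary linear algebra and counting.
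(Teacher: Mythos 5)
The paper does not prove this theorem itself---it is quoted from \cite[Theorem 3.8(a)]{BH1} and \cite[Theorem 3.6]{HR}---so there is no internal proof to compare against; your argument is correct and is essentially the standard one from those references: show that $\Psi_{n,d}(x_\pi)$ is the sum of the matrix units $E_{\bold r'}^{\bold r}$ over exactly those tuples whose fibre partition equals $\pi$, so that it vanishes precisely when $|\pi|>n$, and then conclude by disjointness of supports plus rank--nullity. Your ``alternative'' second route (defining $y_\pi$ and matching the two triangular systems $\Psi_{n,d}(D_\pi)=\sum_{\rho\succeq\pi}y_\rho$ and $D_\pi=\sum_{\rho\succeq\pi}x_\rho$ by downward induction on the partition lattice) is in fact the cleaner way to present the first paragraph, since it avoids having to identify the integers $\mu_{2d}(\pi,\rho)$ of \eqref{eq:expan_orbit} with the M\"obius function of the interval $[\pi,\rho]$ (though that identification is automatic from the defining relation $D_\pi=\sum_{\pi\preceq\rho}x_\rho$). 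Two small points: the identification of $\Im(\Psi_{n,d})$ with the endomorphism algebra should read $\End_{\cor\Sigma_n}(V^{\tens d})$, as you wrote, rather than $\End_{\cor\sym_d}(V^{\tens d})$ as in the paper's statement (an apparent typo), and this last identification---unlike the purely combinatorial basis statements (i) and (ii), which hold over any field---does require the hypothesis $\ch(\cor)=0$ of Theorem \ref{swdsg}.
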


Note that the set of multidigraphs with $d$ arrows and no isolated vertices is  partitioned into
$$\ULG_{d} = \ULG_{d,1} \sqcup \cdots \sqcup \ULG_{d,2d}, $$
where $\ULG_{d,k}$ is the subset consisting of the multidigraphs with $d$ arrows and $k$ non-isolated vertices.
For each $1\le k \le 2d$,  let 
$$\ULG_{d,\le k} := \bigsqcup_{t\le k} \ULG_{d, t}.$$

\begin{theorem} \rm \label{thm:main_partition}
The set $\set{ \sum_{\pi \in E_G} x_\pi }{G\in \ULG_{d}} $ forms a basis of the centralizer 
$\Cent_{\Par_d(\delta)}(\cor \sym_d)$ and 
the set 
$\set{\Psi_{n,d}\left(\sum_{\pi \in E_G} x_\pi \right)}{G\in \ULG_{d,\le n}} $  forms a basis of the centralizer  $\Cent_{\Psi_{n,d}( \Par_d(n)^{\opp} )} \left(\Psi_{n,d}(\cor \sym_d^{\opp} )\right)$. Moreover,
we have
$$\Psi_{n,d}\left(\Cent_{\Par_d(n)^{\opp}}(\cor \sym_d^{\opp})\right) =\Cent_{\Psi_{n,d}( \Par_d(n)^{\opp})} \left(\Psi_{n,d}(\cor \sym_d^{\opp})\right).$$

\end{theorem}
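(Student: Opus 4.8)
The plan is to break the statement into three assertions and dispatch them in order, leaning on the conjugation formula \eqref{eq:action_on_orbit} for the orbit basis exactly as Corollary~\ref{cor:dim=num.of.mutlidigraph} used \eqref{eq:action_on_diagram} for the diagram basis. First I would prove that $\set{\sum_{\pi\in E_G}x_\pi}{G\in\ULG_d}$ is a basis of $\Cent_{\Par_d(\delta)}(\cor\sym_d)$: since $\{x_\pi\}_{\pi\in\Pi_{2d}}$ is a basis of $\Par_d(\delta)$ and, by \eqref{eq:action_on_orbit}, $D_\sigma x_\pi D_{\sigma^{-1}}=x_{\sigma*\pi*\sigma^{-1}}$, the same linear-algebra argument as in the proof of Corollary~\ref{cor:dim=num.of.mutlidigraph} (write $v=\sum a_\pi x_\pi$, impose $D_\sigma v D_{\sigma^{-1}}=v$ for all $\sigma$, deduce $a_\pi$ is constant on $\sym_d$-orbits, and recall that the orbits are exactly the sets $E_G$ for $G\in\ULG_d$ by Corollary~\ref{parbasis}) goes through verbatim. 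This is routine and I would state it as such.

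Next I would handle the image side. By Theorem~\ref{n<2d}(i), $\set{\Psi_{n,d}(x_\pi)}{\pi\in\Pi_{2d},\ |\pi|\le n}$ is a basis of $\Im(\Psi_{n,d})=\End_{\cor\sym_d}(V^{\tens d})$, and by (ii) the remaining $x_\pi$ (those with $|\pi|>n$) span $\Ker(\Psi_{n,d})$. The key combinatorial observation is that $|\pi|$ — the number of blocks of $\pi$ — equals the number of non-isolated vertices of the multidigraph $\phi_d(\pi)$, which is constant on each orbit $E_G$ and equals the vertex count of $G$; hence $E_G\subseteq\{\pi:|\pi|\le n\}$ precisely when $G\in\ULG_{d,\le n}$, and otherwise $E_G\subseteq\{\pi:|\pi|>n\}$. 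Therefore applying $\Psi_{n,d}$ to the basis from the first part: the elements $\Psi_{n,d}(\sum_{\pi\in E_G}x_\pi)$ for $G\in\ULG_{d,\le n}$ are, up to reindexing, the $\sym_d$-orbit sums of the linearly independent set $\set{\Psi_{n,d}(x_\pi)}{|\pi|\le n}$ and so are linearly independent; the ones for $G\notin\ULG_{d,\le n}$ map to $0$. That these orbit sums actually lie in, and span, $\Cent_{\Psi_{n,d}(\Par_d(n)^\opp)}(\Psi_{n,d}(\cor\sym_d^\opp))$ follows because $\Psi_{n,d}$ is an algebra homomorphism intertwining the two conjugation actions (on $\sym_d$ it is \eqref{eq:right action of S_d}), so $\Psi_{n,d}$ carries the centralizer into the centralizer; for the reverse inclusion one repeats the orbit-sum argument inside $\Im(\Psi_{n,d})$ using the basis $\set{\Psi_{n,d}(x_\pi)}{|\pi|\le n}$ and the $\sym_d$-conjugation action on its index set.

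This last point is essentially the third assertion, $\Psi_{n,d}(\Cent_{\Par_d(n)^\opp}(\cor\sym_d^\opp))=\Cent_{\Psi_{n,d}(\Par_d(n)^\opp)}(\Psi_{n,d}(\cor\sym_d^\opp))$, so I would organize the proof to get it as a corollary of the dimension count: the left side is the image of a space with basis $\set{\sum_{\pi\in E_G}x_\pi}{G\in\ULG_d}$, hence is spanned by the $G\in\ULG_{d,\le n}$ orbit sums and has dimension $\#\ULG_{d,\le n}$; the right side visibly contains this image and, by the argument of the previous paragraph applied to the basis $\set{\Psi_{n,d}(x_\pi)}{|\pi|\le n}$ of $\End_{\cor\sym_d}(V^{\tens d})$, has exactly the same dimension $\#\ULG_{d,\le n}$; so the inclusion is an equality. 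The main obstacle — really the only non-bookkeeping step — is establishing that the $\sym_d$-conjugation orbits on the index set $\{\pi:|\pi|\le n\}$ are still precisely the $E_G$ with $G\in\ULG_{d,\le n}$ and that the orbit-sum argument is legitimate at the level of $\Im(\Psi_{n,d})$ even when $\Psi_{n,d}$ is not injective; this is exactly what Theorem~\ref{n<2d} is engineered to supply, since it identifies $\Ker(\Psi_{n,d})$ with the span of a $\sym_d$-stable subset of the orbit basis, so kernel and image are each spanned by unions of orbits and the two computations do not interfere. Everything else is a transcription of the proof of Corollary~\ref{cor:dim=num.of.mutlidigraph}.
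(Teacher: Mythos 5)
Your proposal is correct and follows essentially the same route as the paper's proof: both parts rest on the conjugation formula for the orbit basis together with Theorem~\ref{n<2d}, with the coefficient-matching argument of Corollary~\ref{cor:dim=num.of.mutlidigraph} repeated inside $\Im(\Psi_{n,d})$. Your explicit observation that $|\pi|$ equals the number of non-isolated vertices of $\phi_d(\pi)$ (so the orbits $E_G$ with $G\in\ULG_{d,\le n}$ exactly partition $\{\pi:|\pi|\le n\}$) is left implicit in the paper but is a worthwhile clarification, and your dimension-count phrasing of the final equality is an equivalent repackaging of the paper's containment argument.
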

\begin{proof}
Note that $\set{\Psi_{n,d}\left(\sum_{\pi \in E_G} x_\pi \right)}{G\in \ULG_{d,\le t}} $  is linearly independent over $\cor$ for any $1\le t \le 2d$ by Theorem \ref{n<2d} (i).
Write an element $v$ in $\Par_d(\delta)$ as $v=\sum_{\pi \in \Pi_{2d}}a_\pi x_\pi$ for some $a_\pi \in \cor$.
For any element $\sigma \in \sym_{d}$, we have 
\eqn
D_\sigma v D_{\sigma^{-1}} = \sum_{\pi \in \Pi_{2d}}a_\pi D_\sigma x_\pi D_{\sigma^{-1}} 
= \sum_{\pi \in \Pi_{2d}} a_\pi  x_{\sigma \ast  \pi \ast  \sigma^{-1}}
= \sum_{\pi \in \Pi_{2d}} a_{\sigma^{-1}  \ast \pi  \ast \sigma}  x_{\pi}.
\eneqn
Taking $\delta=n$ and applying $\Psi_{n,d}$, we obtain
\eqn
\Psi_{n,d}(D_{\sigma^{-1}}) \Psi_{n,d}(v) \Psi_{n,d}(D_{\sigma}) 
= \sum_{\pi \in \Pi_{2d}} a_{\sigma^{-1}  \ast \pi  \ast \sigma}  \Psi_{n,d}(x_{\pi})
=\sum_{\pi \in \Pi_{2d}, \ | \pi | \le n} a_{\sigma^{-1}  \ast \pi  \ast \sigma}  \Psi_{n,d}(x_{\pi}).
\eneqn
Because of Theorem \ref{n<2d} (i), we conclude that  $\Psi_{n,d}(v)=\Psi_{n,d}(D_{\sigma^{-1}}) \Psi_{n,d}(v) \Psi_{n,d}(D_{\sigma}) $ if and only if $a_{\sigma^{-1}  \ast \pi  \ast \sigma} = a_\pi$ for all $\pi \in \Pi_{2d}$ with $|\pi|\le n$.

Thus for $\Psi_{n,d} (v) \in \Cent_{\Psi_{n,d}(\Par_d(\delta)^{\opp})}(\Psi_{n,d}(\cor\sym_d^{\opp}))$, we have
\eq \label{eq:orbit_orbit_sum}
\Psi_{n,d}(v)=\sum_{G\in \ULG_{d, \le n} }a_{G} \Psi_{n,d}\left(\sum_{\pi \in E_G} x_\pi \right)
\eneq
for some $a_G \in \cor$. 
Hence we obtain the second assertion. The first can be shown in a similar way.
Because
$$\Psi_{n,d}\left(\Cent_{\Par_d(n)^{\opp}}(\cor \sym_d^{\opp})\right) \subset \Cent_{\Psi_{n,d}( \Par_d(n)^{\opp})} \left(\Psi_{n,d}(\cor \sym_d^{\opp})\right) $$
the equation \eqref{eq:orbit_orbit_sum} proves the last assertion, too.
\end{proof}

By the same reasoning as in Corollary \ref{cor:main_result_for_n_ge_2d}, we obtain
\begin{corollary}  \label{cor:main_result_for_n_<_2d}  \rm
Let $\cor$ be an infinite field.
The dimension of the space $P^d(M_n(\cor))^{\Sigma_n}$ is equal to $\# \, \ULG_{d, \le n}$, the number of multidigraphs with  $d$ arrows and no isolated vertices whose number of vertices is less than or equal to $n$.
\end{corollary}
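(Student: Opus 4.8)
The plan is to follow the proof of Corollary~\ref{cor:main_result_for_n_ge_2d} almost line for line, with the single change that the injectivity of $\Psi_{n,d}$ exploited there (which holds only when $n\ge 2d$) is replaced by the compatibility of the \emph{orbit} basis with $\Psi_{n,d}$ recorded in Theorem~\ref{thm:main_partition}. So the bulk of the work has already been done; what remains is assembling the pieces.

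First I would settle the case $\ch(\cor)=0$. Here Theorem~\ref{thm:dim}, applied to the subgroup $G=\Sigma_n$ of $GL_n(\cor)$, gives
\[
\dim_\cor P^d(M_n(\cor))^{\Sigma_n}
= \dim_\cor \Cent_{\End_{\Phi_{n,d}(\cor\Sigma_n)}(V^{\tens d})}\bigl(\Psi_{n,d}(\cor\sym_d^{\opp})\bigr).
\]
Since $\End_{\Phi_{n,d}(\cor\Sigma_n)}(V^{\tens d})=\End_{\cor\Sigma_n}(V^{\tens d})$, Theorem~\ref{swdsg} identifies the ambient algebra with $\Psi_{n,d}(\Par_d(n)^{\opp})$, so the right-hand side equals $\dim_\cor \Cent_{\Psi_{n,d}(\Par_d(n)^{\opp})}\bigl(\Psi_{n,d}(\cor\sym_d^{\opp})\bigr)$. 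By the second assertion of Theorem~\ref{thm:main_partition}, the set $\set{\Psi_{n,d}\bigl(\sum_{\pi\in E_G}x_\pi\bigr)}{G\in\ULG_{d,\le n}}$ is a basis of this centralizer, so its dimension is $\#\,\ULG_{d,\le n}$; this proves the claim when $\ch(\cor)=0$.

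For an arbitrary infinite field $\cor$, I would argue exactly as at the end of the proof of Corollary~\ref{cor:main_result_for_n_ge_2d}: $P(M_n(\cor))^{\Sigma_n}$ is the invariant ring associated with the permutation representation induced from the conjugation action of $\Sigma_n\cong\sym_n$ on the standard basis of $M_n(\cor)$, so its Hilbert series does not depend on $\cor$ (see \cite[Corollary 3.2.2]{NS}). Comparing the degree-$d$ coefficients with the characteristic-zero computation above then yields $\dim_\cor P^d(M_n(\cor))^{\Sigma_n}=\#\,\ULG_{d,\le n}$ in general.

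The one point genuinely worth checking — and it is only bookkeeping, not a real obstacle — is that every input used in the middle paragraph is available for all $n$, not merely for $n\ge 2d$: Theorem~\ref{thm:dim} needs only $\ch(\cor)=0$ (or $\ch(\cor)>d$); the identity $\End_{\cor\Sigma_n}(V^{\tens d})=\Psi_{n,d}(\Par_d(n)^{\opp})$ in Theorem~\ref{swdsg} is stated with no restriction on $n$ (only the injectivity clause is); and Theorem~\ref{thm:main_partition}, built on the orbit basis and Theorem~\ref{n<2d}, is precisely the statement engineered to account for $\Ker(\Psi_{n,d})$ when $n<2d$. Once these are lined up, the corollary follows immediately.
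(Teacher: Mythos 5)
Your proof is correct and follows essentially the same route the paper intends: Theorem \ref{thm:dim} plus Theorem \ref{swdsg} to identify $\dim_\cor P^d(M_n(\cor))^{\Sigma_n}$ with the dimension of $\Cent_{\Psi_{n,d}(\Par_d(n)^{\opp})}\bigl(\Psi_{n,d}(\cor\sym_d^{\opp})\bigr)$ in characteristic zero, Theorem \ref{thm:main_partition} to count that centralizer by the orbit sums of the orbit basis indexed by $\ULG_{d,\le n}$, and the field-independence of the Hilbert series of $P(M_n(\cor))^{\Sigma_n}$ to pass to an arbitrary infinite field. The paper's own proof is exactly this "same reasoning as Corollary \ref{cor:main_result_for_n_ge_2d}" with the diagram-basis count replaced by Theorem \ref{thm:main_partition}, so nothing further is needed.
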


\begin{remark} \label{rem:MW}
The above result appeared in \cite{MW} in the following way.
 In \cite[Theorem 3]{MW}, the number of non-isomorphic multigraphs (without loops) with $d$ edges and $n$ vertices is identified with the dimension of the space $SF(n,d)$ of certain polynomial invariants of degree $d$.
The authors also remarked that the above corollary can be obtained by the same way. See \cite[4.Concluding remarks 2]{MW}.
\end{remark}

\section{Orthogonal groups, symplectic groups and Brauer algebras}

\subsection{Brauer algebras}
Let $\cor$ be a field and $\delta \in \cor$.
For $d \in \mathbb{Z}_{\ge 1}$, set 
\eqn
\widetilde \Pi_{2d}:=\set{\pi\in \Pi_{2d}}{\text{each block of $\pi$ is of size $2$}} \quad  \text{and} \quad
\widetilde \beta_d:=\{ D_\pi | \pi \in \widetilde \Pi_{2d} \} \subset \beta_d.
\eneqn
Then the subspace $B_d(\delta)$ of the partition algebra $\Par_d(\delta)$ spanned by  $\widetilde \beta_d$  is stable under the multiplication. 
We call it the \emph{Brauer algebra} with parameter $\delta$.

Note that the subset  $\{ s_j, e_j \ | \  j=1, 2, \dots , d-1 \}$ of $\widetilde \beta_d$ is a generating set of the algebra $\Br_d(\delta)$, where 
\begin{center}
${\beginpicture 
\setcoordinatesystem units <1.35cm,0.4cm>
\setplotarea x from 0 to 7, y from -1 to 4
\put{$s_j:= $} at 0 1.5
\put{$\bullet$} at  1 0
\put{$\bullet$} at  1 3
\put{$\bullet$} at  1.5 0
\put{$\bullet$} at  1.5 3
\put{$\cdots$} at  2 1.5
\put{$\bullet$} at  2.5 0
\put{$\bullet$} at  2.5 3
\put{$\bullet$} at  3.5 0
\put{$\bullet$} at  3.5 3
\put{$\bullet$} at  4.5 0
\put{$\bullet$} at  4.5 3
\put{$\bullet$} at  5.5 0
\put{$\bullet$} at  5.5 3
\put{$\cdots$} at  6 1.5
\put{$\bullet$} at  6.5 0
\put{$\bullet$} at  6.5 3
\put{$\bullet$} at  7 0
\put{$\bullet$} at  7 3
\put{$1$} at 1 4
\put{$2$} at 1.5 4
\put{$j-1$} at 2.5 4
\put{$j$} at 3.5 4
\put{$j+1$} at 4.5 4
\put{$j+2$} at 5.5 4
\put{$d-1$} at 6.5 4
\put{$d$} at 7 4

\plot 1 3 1 0 /
\plot 1.5 3 1.5 0 /
\plot 2.5 3 2.5 0 /
\plot 3.5 3 4.5 0 /
\plot 3.5 0 4.5 3 /
\plot 5.5 0 5.5 3 /
\plot 6.5 0 6.5 3 /
\plot 7 0 7 3 /
\endpicture}$,
${\beginpicture 
\setcoordinatesystem units <1.35cm,0.4cm>
\setplotarea x from 0 to 7, y from -1 to 4
\put{$e_j:= $} at 0 1.5
\put{$\bullet$} at  1 0
\put{$\bullet$} at  1 3
\put{$\bullet$} at  1.5 0
\put{$\bullet$} at  1.5 3
\put{$\cdots$} at  2 1.5
\put{$\bullet$} at  2.5 0
\put{$\bullet$} at  2.5 3
\put{$\bullet$} at  3.5 0
\put{$\bullet$} at  3.5 3
\put{$\bullet$} at  4.5 0
\put{$\bullet$} at  4.5 3
\put{$\bullet$} at  5.5 0
\put{$\bullet$} at  5.5 3
\put{$\cdots$} at  6 1.5
\put{$\bullet$} at  6.5 0
\put{$\bullet$} at  6.5 3
\put{$\bullet$} at  7 0
\put{$\bullet$} at  7 3
\put{$1$} at 1 4
\put{$2$} at 1.5 4
\put{$j-1$} at 2.5 4
\put{$j$} at 3.5 4
\put{$j+1$} at 4.5 4
\put{$j+2$} at 5.5 4
\put{$d-1$} at 6.5 4
\put{$d$} at 7 4

\plot 1 3 1 0 /
\plot 1.5 3 1.5 0 /
\plot 2.5 3 2.5 0 /
\plot 5.5 0 5.5 3 /
\plot 6.5 0 6.5 3 /
\plot 7 0 7 3 /

\setquadratic
\plot 3.5 3 4 2.5 4.5 3 /
\plot 3.5 0 4 0.5 4.5 0 /
\endpicture}$.
\end{center}

We have a tower of algebras
\eqn
\cor \sym_d \subset \Br_d(\delta) \subset \Par_d(\delta).
\eneqn
In particular, the set $\widetilde \Pi_{2d}$ is stable under the conjugation action of $\sym_d$. 
By the same reasoning as in the proof of Corollary \ref{cor:dim=num.of.mutlidigraph}, we obtain

\begin{prop} \rm 
The set 
$$\left\{ \sum_{\pi \in E(G)} D_\pi \, \Big | \, G \in \phi_d\left(\widetilde \Pi_{2d}\right)\right\}$$  
forms  a basis for $\Cent_{\Br_d(\delta)}(\cor\sym_d)$, the centralizer of $\cor\sym_d$ in $\Br_d(\delta)$.  In particular, the dimension of $ \Cent_{\Br_d(\delta)}(\cor\sym_d)$ is independent of the choice of $\delta$.
\end{prop}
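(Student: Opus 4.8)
The plan is to imitate the proof of Corollary~\ref{cor:dim=num.of.mutlidigraph} verbatim, replacing the diagram basis $\beta_d$ of $\Par_d(\delta)$ by the diagram basis $\widetilde\beta_d$ of $\Br_d(\delta)$, i.e.\ working with $\widetilde\Pi_{2d}$ in place of $\Pi_{2d}$. The key structural inputs are already in place: $\widetilde\Pi_{2d}$ is stable under the conjugation action $\sigma.\pi=\sigma*\pi*\sigma^{-1}$ of $\sym_d$ (stated just before the proposition), and for $\sigma,\sigma'\in\sym_d$ and $\pi\in\widetilde\Pi_{2d}$ the relation $D_\sigma D_\pi D_{\sigma'}=D_{\sigma*\pi*\sigma'}$ holds inside $\Br_d(\delta)$ with \emph{no} power of $\delta$ appearing, by \eqref{eq:action_on_diagram}; the absence of a $\delta$-factor is exactly what will force the final dimension count to be $\delta$-independent.

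First I would check linear independence: the sets $E(G)=\{\pi\in\widetilde\Pi_{2d}\mid\phi_d(\pi)=G\}$, as $G$ ranges over $\phi_d(\widetilde\Pi_{2d})$, partition $\widetilde\Pi_{2d}$, so the orbit sums $\sum_{\pi\in E(G)}D_\pi$ are sums over pairwise disjoint subsets of the basis $\widetilde\beta_d$ of $\Br_d(\delta)$, hence are linearly independent over $\cor$. Next, for the spanning statement, I would write an arbitrary element as $v=\sum_{\pi\in\widetilde\Pi_{2d}}a_\pi D_\pi$ with $a_\pi\in\cor$ and compute, for $\sigma\in\sym_d$,
\[
D_\sigma v D_{\sigma^{-1}}=\sum_{\pi\in\widetilde\Pi_{2d}}a_\pi D_{\sigma*\pi*\sigma^{-1}}=\sum_{\pi\in\widetilde\Pi_{2d}}a_{\sigma^{-1}*\pi*\sigma}\,D_\pi,
\]
using \eqref{eq:action_on_diagram} and reindexing. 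Hence $v\in\Cent_{\Br_d(\delta)}(\cor\sym_d)$ if and only if $a_{\sigma^{-1}*\pi*\sigma}=a_\pi$ for all $\pi$ and all $\sigma$, i.e.\ the function $\pi\mapsto a_\pi$ is constant on $\sym_d$-orbits in $\widetilde\Pi_{2d}$. Since Corollary~\ref{parbasis} (applied inside the $\sym_d$-stable subset $\widetilde\Pi_{2d}$) identifies these orbits with the fibres $E(G)$, $G\in\phi_d(\widetilde\Pi_{2d})$, any central element is of the form $v=\sum_{G\in\phi_d(\widetilde\Pi_{2d})}a_G\sum_{\pi\in E(G)}D_\pi$, which gives the spanning claim.

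Finally, from the basis statement the dimension of $\Cent_{\Br_d(\delta)}(\cor\sym_d)$ equals $\#\,\phi_d(\widetilde\Pi_{2d})$; as this index set depends only on $d$ and not on the parameter $\delta$, the dimension is independent of $\delta$, which is the "in particular" assertion. I do not anticipate a genuine obstacle here — the proof is a direct transcription of the partition-algebra argument — so the only point deserving a line of care is the verification that multiplying a Brauer diagram on either side by a permutation diagram never closes off a component in the middle row, hence introduces no $\delta$; but this is precisely what \eqref{eq:action_on_diagram} already records, so I would simply invoke it.
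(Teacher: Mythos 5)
Your proposal is correct and follows exactly the route the paper takes: the paper proves this proposition simply by citing "the same reasoning as in the proof of Corollary \ref{cor:dim=num.of.mutlidigraph}," which is precisely the transcription you carry out (orbit sums over the $\sym_d$-stable subset $\widetilde\Pi_{2d}$, using \eqref{eq:action_on_diagram} and Corollary \ref{parbasis} to identify orbits with the fibres $E(G)$). No gaps.
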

We will study the set $\phi_d\left(\widetilde \Pi_{2d}\right)$ further in the last subsection.

\vskip1em
\subsection{Orthogonal groups} \label{og}
Let $\cor$ be an infinite field and let
$V$ be a $\cor$-vector space with a fixed basis $\{ v_i \in V \, | \,  i=1, 2, \dots , n \}$. 

Let $(\cdot \hspace{1pt}, \cdot)_q$ and $(\cdot \hspace{1pt}, \cdot)_{q'}$ be symmetric bilinear forms on $V$ given by 
\eqn 
(v_i,v_j)_q =\delta_{i,j} \quad \text{and} \quad (v_i,v_j)_{q'} =\delta_{i,\overline{j}},
\eneqn
respectively, where $\overline{j}=n+1-j$. 
Then the \emph{orthogonal groups} $O(n, q)$  and $O(n, q')$  are given by 
\begin{align*}
\begin{split}
O(n, q): & =\{ f \in GL_n(\cor) \, | \, (fv, fw)_q=(v, w)_q ~ \text{for all} \  v, w \in V \} =\{ f \in GL_n(\cor) \, | \, f^Tf=I_n \} 
\end{split}
\end{align*}
and
\begin{align*}
\begin{split}
O(n, q'): & =\{ g \in GL_n(\cor) \, | \, (gv, gw)_{q'}=(v, w)_{q'} ~ \text{for all} \  v, w \in V \} =\{ g \in GL_n(\cor) \, | \, g^T I'_n g=I'_n \} 
\end{split}
\end{align*}
respectively, where $I_n$ is the identity matrix in $GL_n(\cor)$ and $I'_n$ is the  $n \times n$ permutation matrix 
\begin{displaymath}
I_n'=
\begin{pmatrix}
  &  &  &  & 1 \\
  &  &  & 1 &  \\
  &  & \reflectbox{$\ddots$} &  &  \\
 1 &  &  &  &  
\end{pmatrix}.
\end{displaymath}

\vskip 1em

There is an  algebra homomorphism  $\Psi_{n,d}^{q'} : \Br_d(n)^\opp \to \End(V^{\tens d})$  given  by 
\eqn
&&\Psi_{n, d}^{q'}(s_j)( v_{i_1} \tens \cdots \tens v_{i_d})= v_{i_1} \tens \cdots \tens v_{i_{j-1}} \tens v_{i_{j+1}} \tens v_{i_{j}} \tens v_{i_{j+2}} \tens \cdots \tens v_{i_d}, \ \text{and} \\
&&\Psi_{n, d}^{q'}(e_j) (v_{i_1} \tens \cdots \tens v_{i_d}) =\delta_{i_j, \overline{i}_{j+1}} ~ v_{i_1} \tens \cdots \tens v_{i_{j-1}} \tens \Bigg( \displaystyle \sum_{k=1}^{n} v_{k} \tens v_{\overline{k}} \Bigg) \tens v_{i_{j+2}} \tens \cdots \tens v_{i_d}.
\eneqn

\begin{theorem} \rm (\cite[Theorem 1.2 (b)]{DH}\rm) \label{q'}
Let $\cor$ be an infinite field with $\ch(\cor)\neq 2$. 
Then the image $\Psi_{n, d}^{q'}(\Br_d(n)^\opp)$ is equal to the space $\End_{\cor O(n, q')}(V^{\tens d})$. 
Moreover  if $n \ge d$, then  the homomorphism $\Psi_{n, d}^{q'}$ is injective.
\end{theorem}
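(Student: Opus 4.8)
The plan is to derive Theorem~\ref{q'} from the classical invariant theory of the orthogonal group, following the familiar two-step pattern ``commuting action'', then ``first fundamental theorem''. First I would record why $\Psi_{n,d}^{q'}$ is a well-defined algebra homomorphism $\Br_d(n)^\opp \to \End_\cor(V^{\tens d})$ at all: one checks the Brauer relations on the generators $s_j$ and $e_j$, the only nonroutine one being $\Psi_{n,d}^{q'}(e_j)^2 = n\,\Psi_{n,d}^{q'}(e_j)$, which holds because $\sum_{k=1}^{n}(v_k,v_{\bar k})_{q'}=n$. Then one observes that $\Psi_{n,d}^{q'}(e_j)$ is the composite, in tensor positions $j$ and $j+1$, of the evaluation $V\tens V\to\cor$, $u\tens w\mapsto (u,w)_{q'}$, with the coevaluation $\cor\to V\tens V$, $1\mapsto\sum_k v_k\tens v_{\bar k}$. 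Since the form $(\cdot,\cdot)_{q'}$ and the element $\sum_k v_k\tens v_{\bar k}$ are $O(n,q')$-invariant, both maps are $O(n,q')$-equivariant; and $\Psi_{n,d}^{q'}(s_j)$ lies in the place-permutation action of $\sym_d$, which commutes with the diagonal $GL(V)$-action. Hence $\Psi_{n,d}^{q'}(\Br_d(n)^\opp)\subseteq \End_{\cor O(n,q')}(V^{\tens d})$.

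For the reverse inclusion I would use the nondegenerate symmetric form $(\cdot,\cdot)_{q'}$ to identify $V$ with $V^*$ as $O(n,q')$-modules, giving a chain of $O(n,q')$-equivariant isomorphisms
\[
\End_\cor(V^{\tens d})\;\cong\;V^{\tens d}\tens (V^{\tens d})^*\;\cong\;V^{\tens d}\tens (V^*)^{\tens d}\;\cong\;V^{\tens 2d},
\]
under which $\End_{\cor O(n,q')}(V^{\tens d})=\bigl(\End_\cor(V^{\tens d})\bigr)^{O(n,q')}$ is carried to the invariant tensor space $\bigl(V^{\tens 2d}\bigr)^{O(n,q')}$. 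The characteristic-free first fundamental theorem of invariant theory for the orthogonal group, valid over any infinite field with $\ch(\cor)\neq 2$ (see \cite{DH} and the references therein), asserts that $\bigl(V^{\tens 2d}\bigr)^{O(n,q')}$ is spanned by the contraction tensors indexed by the perfect matchings of $\{1,\dots,2d\}$. Unwinding the identification, these contraction tensors are exactly the $\Psi_{n,d}^{q'}(D_\pi)$ for $\pi\in\widetilde\Pi_{2d}$, so they span $\End_{\cor O(n,q')}(V^{\tens d})$; combined with the first paragraph this yields $\Psi_{n,d}^{q'}(\Br_d(n)^\opp)=\End_{\cor O(n,q')}(V^{\tens d})$.

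For the injectivity claim when $n\ge d$: since $\widetilde\beta_d$ is a basis of $\Br_d(n)$, it suffices to prove that the contraction tensors $\{\Psi_{n,d}^{q'}(D_\pi)\mid\pi\in\widetilde\Pi_{2d}\}$ are linearly independent in $\End_\cor(V^{\tens d})$. I would expand each $\Psi_{n,d}^{q'}(D_\pi)$ in the matrix-unit basis $\{E_{\bold s}^{\bold r}\}_{\bold r,\bold s\in[1,n]^d}$ and, for each $\pi$, choose a witness pair $(\bold r,\bold s)$ whose pattern of coincidences among the $2d$ entries records precisely the matching $\pi$ and no coarsening of it; the hypothesis $n\ge d$ supplies enough distinct values to realize such a pattern with no accidental coincidences, so the coefficient matrix of the $\Psi_{n,d}^{q'}(D_\pi)$ against these witnesses is triangular with unit diagonal for a suitable partial order on matchings, hence invertible. (Restricting to permutation matchings recovers the familiar faithfulness of $\cor\sym_d$ on $V^{\tens d}$ for $n\ge d$.) The one genuinely deep ingredient is the characteristic-free first fundamental theorem invoked in the second paragraph; once it is granted, the rest is bookkeeping. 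The hypothesis $\ch(\cor)\neq 2$ enters exactly to keep the symmetric bilinear form $(\cdot,\cdot)_{q'}$ well-behaved — so that evaluation and coevaluation are mutually dual and the orthogonal FFT holds in its usual form — whereas in characteristic $2$ one must separate the quadratic form from its polarization and the statement must be modified.
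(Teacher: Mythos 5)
The paper offers no proof of this statement: it is quoted verbatim from Doty--Hu \cite[Theorem 1.2(b)]{DH}, so your proposal is measured against the literature rather than against an argument in the text. Your first two paragraphs (well-definedness on the generators $s_j,e_j$, equivariance of evaluation/coevaluation, and the reverse inclusion via the characteristic-free first fundamental theorem for $O(n,q')$ under the identification $\End_\cor(V^{\tens d})\cong V^{\tens 2d}$) are the standard route and are essentially sound; the FFT you invoke is De Concini--Procesi's, valid for a nondegenerate symmetric form over an infinite field with $\ch(\cor)\neq 2$, and it is indeed the ingredient Doty--Hu ultimately rely on for surjectivity.

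The injectivity step, however, has a genuine gap. For the form $(\cdot,\cdot)_{q'}$ the ``pattern of coincidences'' recorded by a Brauer diagram is not a pattern of equalities: a horizontal edge $\{i,j\}$ imposes $u_i=\overline{u_j}$ with $\overline{j}=n+1-j$, so a witness avoiding all accidental coincidences must assign to distinct edges \emph{disjoint bar-orbits} $\{a,\overline{a}\}$, of which there are only $\lceil n/2\rceil$. Your argument therefore yields injectivity only for roughly $n\ge 2d$, not $n\ge d$. Moreover the claimed ``triangular with unit diagonal'' structure fails already at $n=d=2$: every index tuple in the support of $\Psi_{2,2}^{q'}(e_1)$ (namely $(1,2,1,2)$, $(1,2,2,1)$, $(2,1,1,2)$, $(2,1,2,1)$) also lies in the support of $\Psi_{2,2}^{q'}(D_{\mathrm{id}})$ or of $\Psi_{2,2}^{q'}(D_{(12)})$, so no choice of witnesses makes the coefficient matrix unitriangular; the three operators are still independent, but the relevant determinant is $\pm 2$, so the verification genuinely uses $\ch(\cor)\neq 2$. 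Two honest repairs: (a) run your witness argument for the orthonormal form $q$, where every edge imposes a plain equality and $d$ distinct values suffice, then pass to $\overline{\cor}$, where $q'$ and $q$ are equivalent, and note that linear independence of the $0/1$-matrices $\Psi_{n,d}^{q'}(D_\pi)$ descends from $\overline{\cor}$ to $\cor$; or (b) cite the second fundamental theorem (the kernel of $\Br_d(n)^\opp\to\End_\cor(V^{\tens d})$ is generated by an idempotent that exists only when $d\ge n+1$). Either way, the bound $n\ge d$ is not ``bookkeeping.''
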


From Theorem \ref{thm:dim} and Theorem \ref{q'} we have 
\begin{theorem} \rm \label{dimensionswdo}
Let $\cor$ be an infinite field with $\ch(\cor)\neq 2$. 
Assume that   either $\ch(\cor) =0$ or $\ch(\cor) > d$. 
If  $n \ge d$, then
$$\dim_{\cor}P^d(M_n(\cor))^{O(n,q')}=\dim_{\cor}\Cent_{\Br_d(n)}(\cor\sym_d).
$$
In particular, if $\ch(\cor) =0$, then the sequence $\left( \dim_{\cor}P^d(M_n(\cor))^{O(n,q')} \right)_{n =1}^{\infty}$ is  stable for $n \ge d$.
\end{theorem}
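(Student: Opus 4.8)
The plan is to chain together the results established earlier in the excerpt in exactly the manner signalled by the sentence "From Theorem \ref{thm:dim} and Theorem \ref{q'} we have". First I would invoke Theorem \ref{thm:dim} with $G = O(n,q') \subseteq GL_n(\cor)$: since $\cor$ is an infinite field and $\ch(\cor)=0$ or $\ch(\cor)>d$, we get
$$\dim_{\cor} P^d(M_n(\cor))^{O(n,q')} = \dim_{\cor}\Cent_{\End_{\Phi_{n,d}(\cor O(n,q'))}(V^{\tens d})}\left(\Psi_{n,d}(\cor\sym_d^{\opp})\right).$$
Next, I would identify the ambient algebra $\End_{\Phi_{n,d}(\cor O(n,q'))}(V^{\tens d})$ with $\End_{\cor O(n,q')}(V^{\tens d})$ (this is immediate, since $\Phi_{n,d}(\cor O(n,q'))$ is by definition the image of the group-algebra action) and then apply Theorem \ref{q'}: for $n\ge d$ the map $\Psi_{n,d}^{q'}\colon \Br_d(n)^{\opp}\to \End(V^{\tens d})$ is an injective algebra homomorphism with image exactly $\End_{\cor O(n,q')}(V^{\tens d})$, hence an isomorphism of $\cor$-algebras $\Br_d(n)^{\opp} \isoto \End_{\cor O(n,q')}(V^{\tens d})$. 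The one thing to check carefully here is that this isomorphism carries the copy of $\cor\sym_d^{\opp}$ inside $\Br_d(n)^{\opp}$ onto $\Psi_{n,d}(\cor\sym_d^{\opp})$ — i.e.\ that $\Psi_{n,d}^{q'}$ restricted to $\cor\sym_d^{\opp}$ agrees with $\Psi_{n,d}$. This follows from comparing the formula for $\Psi_{n,d}^{q'}(s_j)$ with the place-permutation action \eqref{eq:right action of S_d}, since the $s_j$ generate $\cor\sym_d\subseteq\Br_d(n)$.

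Having an algebra isomorphism that matches up the two distinguished subalgebras, the centralizers correspond:
$$\Cent_{\End_{\cor O(n,q')}(V^{\tens d})}\left(\Psi_{n,d}(\cor\sym_d^{\opp})\right) \cong \Cent_{\Br_d(n)^{\opp}}\left(\cor\sym_d^{\opp}\right),$$
and an algebra and its opposite have centralizers of the same dimension (indeed $\Cent_{A^{\opp}}(B^{\opp}) = \Cent_A(B)$ as sets), so the right-hand side has dimension $\dim_{\cor}\Cent_{\Br_d(n)}(\cor\sym_d)$. Combining the displayed equalities yields
$$\dim_{\cor}P^d(M_n(\cor))^{O(n,q')} = \dim_{\cor}\Cent_{\Br_d(n)}(\cor\sym_d)$$
for all $n\ge d$, which is the first assertion.

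For the stability statement, specialize to $\ch(\cor)=0$ (so the hypotheses of Theorem \ref{thm:dim} hold for every $d$). By the Proposition in the Brauer-algebra subsection, $\dim_{\cor}\Cent_{\Br_d(\delta)}(\cor\sym_d)$ is independent of the parameter $\delta$; in particular it equals $\dim_{\cor}\Cent_{\Br_d(n)}(\cor\sym_d)$ for every $n$, a quantity that does not depend on $n$ at all. Hence for $n\ge d$ the left-hand side $\dim_{\cor}P^d(M_n(\cor))^{O(n,q')}$ is constant in $n$, i.e.\ the sequence $\left(\dim_{\cor}P^d(M_n(\cor))^{O(n,q')}\right)_{n=1}^\infty$ is stable for $n\ge d$. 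The only genuinely non-routine point in the whole argument is the bookkeeping with opposite algebras and with the identification of the subalgebra $\cor\sym_d^{\opp}$ under $\Psi_{n,d}^{q'}$ versus $\Psi_{n,d}$; everything else is a direct citation of Theorem \ref{thm:dim}, Theorem \ref{q'}, and the $\delta$-independence Proposition.
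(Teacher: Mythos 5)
Your proposal is correct and follows exactly the route the paper intends (the paper itself only writes ``From Theorem \ref{thm:dim} and Theorem \ref{q'} we have'' and leaves the details implicit): apply Theorem \ref{thm:dim} with $G=O(n,q')$, use the injectivity and image statement of Theorem \ref{q'} for $n\ge d$ to identify the ambient endomorphism algebra with $\Br_d(n)^{\opp}$ compatibly with the copy of $\cor\sym_d^{\opp}$, and deduce stability from the $\delta$-independence of $\dim_\cor\Cent_{\Br_d(\delta)}(\cor\sym_d)$. The care you take with the opposite-algebra bookkeeping and with checking that $\Psi_{n,d}^{q'}$ restricted to $\cor\sym_d^{\opp}$ agrees with the place-permutation action is exactly the right (and only) point that needs verification.
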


\vskip 1em
Assume that $\ch(\cor) \neq 2$ and 
$\displaystyle \sqrt{-1} \in \cor$. 
Then the map $\varphi_{O}^{ } : V \rightarrow V$ given by
\begin{displaymath}
\varphi_{O}^{ }(v_i)= \left\{ \begin{array}{ll}
v_i +  \sqrt{-1}v_{\overline i} & \text{ if } 1 \le i \le \frac{n}{2} \\
-\dfrac{ \sqrt{-1}}{2} v_i + \dfrac{1}{2} v_{\overline i} & \text{if } ~ i > \frac{n+1}{2} \\
v_i & \text{ if } i=\frac{n+1}{2}
\end{array} \right.
\end{displaymath}
satisfies 
\eq  \label{eq:isometry}
(\varphi_{O}^{ }(v), \varphi_{O}^{ }(w))_{q'}=(v,w)_{q}\qquad \text{for} \ v,w\in V.
\eneq
In other words, $\varphi_{O}^{ }$ is an isometry between the quadratic spaces $(V, (\cdot \hspace{1pt}, \cdot)_{q})$ and $(V, (\cdot \hspace{1pt}, \cdot)_{q'})$.
Actually the assumption $\sqrt{-1} \in \cor$ is necessary if we want to have isometries between $(V, (\cdot \hspace{1pt}, \cdot)_{q})$ and $(V, (\cdot \hspace{1pt}, \cdot)_{q'})$ for sufficiently many different dimensions.
\begin{prop} \rm 
Let $\cor$ be  a field with $\ch(\cor)\neq 2$. If $\dim_\cor V \equiv 2 \ \text{or} \ 3 \pmod 4$, and the quadratic spaces $(V, (\cdot \hspace{1pt}, \cdot)_{q})$ and $(V, (\cdot \hspace{1pt}, \cdot)_{q'})$ are isometric to each other,
then $\sqrt{-1}\in \cor$.
\end{prop}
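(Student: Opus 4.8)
The plan is to compute discriminants. The two symmetric bilinear forms $(\cdot,\cdot)_q$ and $(\cdot,\cdot)_{q'}$ have Gram matrices $I_n$ and $I_n'$ respectively (in the chosen basis), so $\det$ of their Gram matrices are $1$ and $\det I_n' = (-1)^{\lfloor n/2 \rfloor}$. Two nondegenerate quadratic spaces over a field $\cor$ (with $\ch\cor \neq 2$) that are isometric must in particular have the same discriminant in $\cor^\times/(\cor^\times)^2$; hence if $(V,(\cdot,\cdot)_q) \cong (V,(\cdot,\cdot)_{q'})$ then $(-1)^{\lfloor n/2\rfloor} \in (\cor^\times)^2$.

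First I would record the elementary fact that $\det I_n' = (-1)^{\lfloor n/2 \rfloor}$: the permutation reversing $1,\dots,n$ is a product of $\lfloor n/2\rfloor$ transpositions, so its sign — which is the determinant of the permutation matrix — is $(-1)^{\lfloor n/2\rfloor}$. Next, the key classical input: an isometry of nondegenerate symmetric bilinear spaces is given by an invertible matrix $g$ with $g^T I_n' g = I_n$ (matching the Gram matrices in the respective bases), so taking determinants gives $(\det g)^2 \cdot (-1)^{\lfloor n/2\rfloor} = 1$, i.e. $(-1)^{\lfloor n/2 \rfloor} = (\det g)^{-2}$ is a square in $\cor^\times$. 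Then I would observe that when $n \equiv 2$ or $3 \pmod 4$ we have $\lfloor n/2\rfloor$ odd, so $(-1)^{\lfloor n/2\rfloor} = -1$, and therefore $-1$ is a square in $\cor$, i.e.\ $\sqrt{-1} \in \cor$.

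To be careful about the cases: if $n = 4k+2$ then $\lfloor n/2\rfloor = 2k+1$ is odd; if $n = 4k+3$ then $\lfloor n/2\rfloor = 2k+1$ is odd; whereas for $n \equiv 0, 1 \pmod 4$ the floor is even and the argument yields no information (consistent with the fact that $I_n$ and $I_n'$ are then genuinely isometric over any field, via $\varphi_O$ when $\sqrt{-1}\in\cor$, but the proposition only claims the converse in the stated residue classes).

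I do not expect a real obstacle here; the only thing to be slightly careful with is the bookkeeping of what "isometric" means at the level of Gram matrices (congruence $g^T A' g = A$ rather than similarity) and making sure the determinant computation of $I_n'$ is stated for all parities, not just even $n$. One could alternatively phrase the whole argument through the discriminant invariant of a quadratic form in $\cor^\times/(\cor^\times)^2$, citing any standard reference on symmetric bilinear forms, but the direct determinant computation above is self-contained and short, so that is the route I would take.
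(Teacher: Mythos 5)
Your proof is correct and uses the same key idea as the paper: congruent Gram matrices have determinants in the same square class of $\cor^\times$, and in the stated residue classes that class is $-1$. The only cosmetic difference is that you compute $\det I_n'=(-1)^{\lfloor n/2\rfloor}$ directly as the sign of the reversing permutation, whereas the paper first passes to a congruent diagonal matrix $D_n$ with $2k+1$ entries equal to $-1$ and reads off $\det D_n=-1$; your route is a bit more direct but the argument is the same.
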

\begin{proof}
Assume that $n = 4k+2$ (respectively, $4k+3$) for some $k \in \Z_{\ge 0}$.
One can show that matrix $I_n'$ is congruent to a diagonal matrix $D_n$ whose entries are $2k+1$ many $-1$'s  and  $2k+1$ many  (respectively $2k+2$ many) $1$'s, by  a basis change similar to $\varphi_O^{}$.
Hence $I_n$ is congruent to $I'_n$, then $I_n$ is congruent to $D_n$. 
In particular, $1=\det(I_n)$ and $-1=\det(D_n)$ belong to the same square class of $\cor$ and hence 
$\cor$ contains $\sqrt{-1}$, as desired.
\end{proof}

Assume that $\cor$ is an infinite field such that $\sqrt{-1} \in \cor$ and $\ch(\cor)\neq 0$. Set 
\eq \label{eq:PhiOconj}
\Psi_{n, d}^{q}(D_\pi)= (\varphi_{O}^{-1})^{\tens d} \circ \Psi_{n, d}^{q'}(D_\pi) \circ \varphi_{O}^{\tens d}\quad \text{ for } D_\pi \in \Br_d(n)^\opp.
\eneq
Then we have an analogue of Theorem \ref{q'}:  
the image $\Psi_{n,d}^q(\Br_d(n)^\opp)$ of the algebra homomorphism $\Psi_{n,d}^q$ is equal to the space $\End_{\cor O(n, q')}(V^{\tens d})$, and  the homomorphism $\Psi_{n, d}^{q'}$ is injective, provided  $n \ge d$ (Cf. \cite[Proposition 2.8]{GN}).

From Theorem \ref{thm:dim}, we obtain
\begin{theorem} \rm \label{dimensionswdo}
Let $\cor$ be an infinite field with $\ch(\cor)\neq 2$ and $\displaystyle \sqrt{-1}  \in \cor$. 
Assume that   either $\ch(\cor) =0$ or $\ch(\cor) > d$. 
If  $n \ge d$, then
$$\dim_{\cor}P^d(M_n(\cor))^{O(n,q)}=\dim_{\cor}\Cent_{\Br_d(n)}(\cor\sym_d).
$$
In particular, if $\ch(\cor) =0$, then the sequence $\left( \dim_{\cor}P^d(M_n(\cor))^{O(n,q)} \right)_{n =1}^{\infty}$ is  stable for $n \ge d$.
\end{theorem}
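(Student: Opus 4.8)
The plan is to obtain the statement by specializing Theorem~\ref{thm:dim} to $G=O(n,q)$ and then feeding in the Schur--Weyl duality for $O(n,q)$ recorded in \eqref{eq:PhiOconj} and the lines following it.

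First I would apply Theorem~\ref{thm:dim} with $G=O(n,q)\subseteq GL_n(\cor)$ --- legitimate since $\ch(\cor)=0$ or $\ch(\cor)>d$ --- to get
\[
\dim_\cor P^d(M_n(\cor))^{O(n,q)}
=\dim_\cor \Cent_{\End_{\Phi_{n,d}(\cor O(n,q))}(V^{\tens d})}\!\left(\Psi_{n,d}(\cor\sym_d^\opp)\right),
\]
and since $\End_{\Phi_{n,d}(\cor O(n,q))}(V^{\tens d})=\End_{\cor O(n,q)}(V^{\tens d})$, it remains to compute the centralizer of $\Psi_{n,d}(\cor\sym_d^\opp)$ inside $\End_{\cor O(n,q)}(V^{\tens d})$.

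Next I would use the analogue of Theorem~\ref{q'} stated just above: $\Psi_{n,d}^q\colon \Br_d(n)^\opp\to\End(V^{\tens d})$ has image exactly $\End_{\cor O(n,q)}(V^{\tens d})$ and is injective because $n\ge d$, hence is an isomorphism of $\cor$-algebras $\Br_d(n)^\opp\xrightarrow{\ \sim\ }\End_{\cor O(n,q)}(V^{\tens d})$. Moreover a place permutation of $V^{\tens d}$ commutes with $\varphi_O^{\tens d}$, so by \eqref{eq:PhiOconj} the restriction of $\Psi_{n,d}^q$ to $\cor\sym_d^\opp\subseteq\Br_d(n)^\opp$ agrees with $\Psi_{n,d}|_{\cor\sym_d^\opp}$; in particular $\Psi_{n,d}^q$ carries $\cor\sym_d^\opp$ isomorphically onto $\Psi_{n,d}(\cor\sym_d^\opp)$. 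Consequently $\Psi_{n,d}^q$ restricts to an isomorphism
\[
\Cent_{\Br_d(n)^\opp}(\cor\sym_d^\opp)\xrightarrow{\ \sim\ }\Cent_{\End_{\cor O(n,q)}(V^{\tens d})}\!\left(\Psi_{n,d}(\cor\sym_d^\opp)\right),
\]
and since the centralizer of a subset is, as a subspace, the same in an algebra and in its opposite, $\dim_\cor\Cent_{\Br_d(n)^\opp}(\cor\sym_d^\opp)=\dim_\cor\Cent_{\Br_d(n)}(\cor\sym_d)$. Combining the two displays gives the first assertion.

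For the stability part, if $\ch(\cor)=0$ the hypothesis on the characteristic holds for every $d$, and by the earlier Proposition asserting that $\dim_\cor\Cent_{\Br_d(\delta)}(\cor\sym_d)$ is independent of $\delta$, the right-hand side of the first assertion does not change with $n$ once we set $\delta=n$; hence $\dim_\cor P^d(M_n(\cor))^{O(n,q)}$ is constant for $n\ge d$. I expect no real obstacle here: the theorem repackages Theorem~\ref{thm:dim} and the $O(n,q)$ Schur--Weyl duality, and the only step requiring a moment's care is verifying that $\Psi_{n,d}^q$ restricts to $\Psi_{n,d}$ on $\cor\sym_d$, which is exactly what makes the two centralizers correspond.
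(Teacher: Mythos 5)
Your proposal is correct and follows exactly the route the paper intends: Theorem~\ref{thm:dim} applied to $G=O(n,q)$, combined with the conjugated Schur--Weyl duality for $\Psi^q_{n,d}$ stated just before the theorem, with stability coming from the $\delta$-independence of $\dim_\cor\Cent_{\Br_d(\delta)}(\cor\sym_d)$. The one step you flag as needing care --- that $\Psi^q_{n,d}$ restricts to $\Psi_{n,d}$ on $\cor\sym_d^\opp$ because place permutations commute with $\varphi_O^{\tens d}$ --- is indeed the right verification and is handled correctly.
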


\begin{remark}
One can check that 
\eqn
&&\Psi_{n, d}^{q}(s_j) ( v_{i_1} \tens \cdots \tens v_{i_d} )= v_{i_1} \tens \cdots \tens v_{i_{j-1}} \tens v_{i_{j+1}} \tens v_{i_{j}} \tens v_{i_{j+2}} \tens \cdots \tens v_{i_d}, \ \text{and} \\
&&\Psi_{n, d}^{q}(e_j)  (v_{i_1} \tens \cdots \tens v_{i_d})=\delta_{i_j, i_{j+1}} v_{i_1} \tens \cdots \tens v_{i_{j-1}} \tens \Bigg( \displaystyle \sum_{k=1}^{n}  v_k \tens v_{k} \Bigg) \tens v_{i_{j+2}} \tens \cdots \tens v_{i_d}.
\eneqn
\end{remark}

\subsection{Symplectic groups} \label{sg}

Let $\cor$ be an infinite field and let $V$ be a $\cor$-vector space with a fixed basis $\{ v_i \in V \, | \,  i=1, 2, \dots , n \}$. 
Through this subsection, we assume that $n=2m$ for some $m \in \Z_{\ge 1}$.

Let $(\cdot \hspace{1pt}, \cdot)_s$ be nondegenerate skew-symmetric bilinear forms on $V$ given by 
\begin{displaymath}
(v_i,v_j)_s = \left\{ \begin{array}{ll}
1 & \text{ if } j=i+m \text{ and } 1 \le i \le m \\
-1 & \text{ if } j=i-m \text{ and } m+1 \le i \le 2m \\
0 & \text{ otherwise }
\end{array} \right.
\end{displaymath}

The \emph{symplectic group} $Sp_n(\cor)$ is the subgroup of $GL_n(\cor)$ given by
\eqn
Sp_n(\cor):
 =\{ f \in GL_n(\cor) \, | \, f^TJ_nf=J_n \},
\eneqn
where 
\eqn
J_n= 
\left( \begin{array} {c|c} 
			O & I_m \\
			\hline
			-I_m & O
\end{array} \right). 
\eneqn

There is an algebra homomorphisms $\Psi_{n, d}^{s} : \Br_d(-n)^\opp \to \End(V^{\tens d})$ given by 
\eqn
&&\Psi_{n, d}^{s}(s_j)( v_{i_1} \tens \cdots \tens v_{i_d})=  - v_{i_1} \tens \cdots \tens v_{i_{j-1}} \tens v_{i_{j+1}} \tens v_{i_{j}} \tens v_{i_{j+2}} \tens \cdots \tens v_{i_d}, \ \text{and} \\
&&\Psi_{n, d}^{s}(e_j) (v_{i_1} \tens \cdots \tens v_{i_d}) \\
&&\hspace{1cm}= (v_{i_j},v_{i_{j+1}})_s  ~ v_{i_1} \tens \cdots \tens v_{i_{j-1}} \tens \Bigg( \displaystyle \sum_{k=1}^{m} v_{m+k} \tens v_k - v_k \tens v_{m+k} \Bigg) \tens v_{i_{j+2}} \tens \cdots \tens v_{i_d}. 
\eneqn

\vskip1em
\begin{theorem} \rm(\cite[Proposition 1.3, Theorem 1.4]{DDH}\rm) \label{sp}
Let $\cor$ be an  infinite field. 
Then the image
$\Psi_{n, d}^{s}(\Br_d(-n)^\opp)$ is equal to the space $\End_{\cor Sp_n(\cor)}(V^{\tens d})$. 
Moreover,  if $n \ge 2d$, then the homomorphism $\Psi_{n, d}^{s}$ is injective.
\end{theorem}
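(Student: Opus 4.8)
The plan is to run the classical argument for symplectic Schur--Weyl duality, taking the extra care that is needed because $\cor$ is only assumed infinite; throughout write $G=Sp_n(\cor)$ and recall $n=2m$. First I would check from the defining formulas that $\Psi_{n,d}^{s}(s_j)$ and $\Psi_{n,d}^{s}(e_j)$ satisfy the defining relations of $\Br_d(-n)^{\opp}$, so that, $\{s_j,e_j\}$ being a generating set, $\Psi_{n,d}^{s}$ is an algebra homomorphism. To place its image in $\End_{\cor G}(V^{\tens d})$, put $\omega:=\sum_{k=1}^{m}(v_{m+k}\tens v_k-v_k\tens v_{m+k})\in V\tens V$; a direct check (or the identification of $V\tens V$ with $\End(V)$ via the form) shows that $\omega$ is $G$-invariant, while $(\cdot,\cdot)_s\colon V\tens V\to\cor$ is $G$-equivariant. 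Hence $\Psi_{n,d}^{s}(e_j)$, being (contraction of tensor positions $j,j+1$ against the form) followed by (insertion of $\omega$ in those positions), commutes with the diagonal action of $G$, as does $\Psi_{n,d}^{s}(s_j)$, which merely transposes positions $j$ and $j+1$; thus $\Img(\Psi_{n,d}^{s})\subseteq\End_{\cor G}(V^{\tens d})$.

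For the reverse inclusion: since $\cor$ is infinite, $G$ is Zariski dense in the symplectic group scheme, so $\End_{\cor G}(V^{\tens d})$ coincides with the space of invariants of that group scheme in $\End(V^{\tens d})$. The form yields a $Sp_n$-equivariant isomorphism $V\xrightarrow{\sim}V^{*}$, whence $\End(V^{\tens d})\cong V^{\tens d}\tens(V^{*})^{\tens d}\cong V^{\tens 2d}$ equivariantly, and $\End_{\cor G}(V^{\tens d})\cong(V^{\tens 2d})^{Sp_n}$. Now I invoke the first fundamental theorem of invariant theory for the symplectic group, valid over an arbitrary field (De Concini and Procesi): $(V^{\tens 2d})^{Sp_n}$ is spanned by the tensors attached to the perfect matchings of $\{1,\dots,2d\}$, i.e.\ to the Brauer diagrams. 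Tracing back through the isomorphisms, each such spanning vector is, up to sign, $\Psi_{n,d}^{s}(D_\pi)$ for the corresponding $\pi\in\widetilde\Pi_{2d}$, so $\Img(\Psi_{n,d}^{s})=\End_{\cor G}(V^{\tens d})$.

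For injectivity when $n\ge 2d$, note that $\dim_\cor\Br_d(-n)=(2d-1)!!=\#\widetilde\Pi_{2d}$, so it suffices to show that $\{\Psi_{n,d}^{s}(D_\pi)\mid\pi\in\widetilde\Pi_{2d}\}$ is linearly independent in $\End(V^{\tens d})$. A computation with the generators shows that for a Brauer diagram $\pi$ the operator $\Psi_{n,d}^{s}(D_\pi)$ is a signed sum of the elementary operators $E_{\mathbf s}^{\mathbf r}$ introduced earlier, one for each labelling $\mathbf u\in[1,n]^{2d}$ of the $2d$ endpoints of $\pi$ under which the two ends of every edge of $\pi$ receive a symplectic-dual pair $\{a,a+m\}$ of indices. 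Because $n=2m\ge 2d$, for each $\pi$ I can choose such a labelling $\mathbf u_\pi$ using the $d$ distinct pairs $\{1,m+1\},\dots,\{d,m+d\}$ on the $d$ edges; then every index value occurs exactly once in $\mathbf u_\pi$, which forces $\mathbf u_\pi$ to be compatible with no perfect matching other than $\pi$ itself. Hence the corresponding elementary operator appears with coefficient $\pm1$ in $\Psi_{n,d}^{s}(D_\pi)$ and with coefficient $0$ in $\Psi_{n,d}^{s}(D_{\pi'})$ for all $\pi'\neq\pi$; linear independence, and therefore injectivity, follows.

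The hard part is the surjectivity step: in positive characteristic one cannot appeal to classical complex invariant theory, so the essential input is the characteristic-free first fundamental theorem for $Sp_n$ (equivalently, the fact that $V^{\tens d}$ is a tilting module for $Sp_n$ with $\End_{Sp_n}(V^{\tens d})$ of the expected dimension, or a base-change-from-$\Z$ argument for the symplectic Schur algebra). Granting that, Step 1 is a routine relations check and Step 3 a routine index count.
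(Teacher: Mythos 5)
The paper does not prove this statement at all: Theorem \ref{sp} is imported verbatim from Dipper--Doty--Hu \cite[Proposition 1.3, Theorem 1.4]{DDH}, so there is no internal argument to compare yours with. Judged on its own, your outline is a reasonable reconstruction, and it takes a genuinely different route from the cited source: \cite{DDH} obtain surjectivity by introducing the symplectic Schur algebra $\End_{\Br_d(-n)}(V^{\tens d})$, showing it is a quasi-hereditary generalized Schur algebra stable under base change, and deducing the double-centralizer property from tilting-module theory; you instead go through Zariski density of $Sp_n(\cor)$ for $\cor$ infinite, the identification $\End(V^{\tens d})\cong V^{\tens 2d}$ via the form, and the characteristic-free first fundamental theorem of De Concini--Procesi. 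Both routes concentrate all the difficulty in one substantial external input, and you correctly flag that this is where the real content lies; your route is shorter to state, while the \cite{DDH} route yields more (the structure of the symplectic Schur algebra) and is what the literature actually records for arbitrary infinite fields.

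One detail in your injectivity argument needs repair. Writing $\Psi_{n,d}^{s}(D_\pi)$ as a signed sum of operators $E_{\mathbf s}^{\mathbf r}$, the per-edge condition on a labelling $\mathbf u\in[1,n]^{2d}$ is \emph{not} uniformly ``the two ends form a symplectic-dual pair'': that is the condition only for horizontal edges (coming from insertion/contraction of $\omega$), whereas a through-strand forces its two endpoints to carry \emph{equal} indices (as one sees already from $\Psi_{n,d}^{s}(s_j)$, which permutes factors up to sign). Consequently your labelling $\mathbf u_\pi$, in which all $2d$ values are distinct and every edge carries a dual pair, has coefficient $0$ in $\Psi_{n,d}^{s}(D_\pi)$ whenever $\pi$ has a through-strand. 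The fix is easy and uses $n\ge 2d$ in the same way: assign to the $d$ edges of $\pi$ the $d$ distinct ``colours'' $1,\dots,d\le m$, labelling the two ends of a through-strand by $(a,a)$ and the two ends of a horizontal edge by $(a,a+m)$; then the only equal pairs of entries are the through-strands of $\pi$ and the only dual pairs are the horizontal edges of $\pi$, so the resulting elementary operator occurs with coefficient $\pm1$ in $\Psi_{n,d}^{s}(D_\pi)$ and with coefficient $0$ in $\Psi_{n,d}^{s}(D_{\pi'})$ for every $\pi'\ne\pi$. (Alternatively, carry out the whole count in the $V^{\tens 2d}$ picture, where every edge is uniformly a copy of $\omega$ and your original labelling is correct.) With that adjustment the dimension count $\dim_\cor\Br_d(-n)=(2d-1)!!$ finishes the injectivity as you say.
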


From Theorem \ref{thm:dim} and Theorem \ref{sp}, we obtain the following theorem.
\begin{theorem} \rm \label{dimensionswdsp}
Let $\cor$ be an infinite field and assume that either $\ch(\cor)=0$ or $\ch(\cor) >d$.
If $n \ge 2d$, then  
$$\dim_{\cor}P^d(M_n(\cor))^{Sp_n(\cor)}=\dim_{\cor}\Cent_{\Br_d(-n)}(\cor\sym_d).
$$
In particular, if $\ch(\cor) =0$, then the sequence $\left( \dim_{\cor}P^d(M_n(\cor))^{Sp_n(\cor)} \right)_{n =1}^{\infty}$ is  stable for $n \ge 2d$.
\end{theorem}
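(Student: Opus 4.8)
The plan is to mirror the derivation of the orthogonal analogue, feeding in Schur--Weyl duality for the symplectic group (Theorem~\ref{sp}) in place of the one for $O(n,q')$. First I would apply Theorem~\ref{thm:dim} to the subgroup $G=Sp_n(\cor)$ of $GL_n(\cor)$: since $\ch(\cor)=0$ or $\ch(\cor)>d$, this gives
\[
\dim_\cor P^d(M_n(\cor))^{Sp_n(\cor)}=\dim_\cor\Cent_{\End_{\cor Sp_n(\cor)}(V^{\tens d})}\!\bigl(\Psi_{n,d}(\cor\sym_d^{\opp})\bigr),
\]
where $\Psi_{n,d}$ is the place-permutation action of \eqref{eq:right action of S_d} and, as usual, $\End_{\cor Sp_n(\cor)}(V^{\tens d})=\End_{\Phi_{n,d}(\cor Sp_n(\cor))}(V^{\tens d})$. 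Thus it remains only to identify this last centralizer, up to dimension, with $\Cent_{\Br_d(-n)}(\cor\sym_d)$.

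For that I would invoke Theorem~\ref{sp}: when $n\ge 2d$, the homomorphism $\Psi_{n,d}^{s}\colon\Br_d(-n)^{\opp}\to\End(V^{\tens d})$ is injective with image $\End_{\cor Sp_n(\cor)}(V^{\tens d})$, so it is an isomorphism of algebras $\Br_d(-n)^{\opp}\isoto\End_{\cor Sp_n(\cor)}(V^{\tens d})$. Under the standard embedding $\cor\sym_d\hookrightarrow\Br_d(-n)$ sending the transposition $(j,j+1)$ to $s_j$, this isomorphism carries $\cor\sym_d^{\opp}$ onto the subalgebra $\Psi_{n,d}^{s}(\cor\sym_d^{\opp})$ of $\End(V^{\tens d})$. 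The one place where the symplectic case genuinely differs from the orthogonal one is the sign: $\Psi_{n,d}^{s}(s_j)$ equals $-1$ times the operator interchanging the $j$-th and $(j+1)$-st tensor factors, whence $\Psi_{n,d}^{s}(\sigma)=\mathrm{sgn}(\sigma)\,\Psi_{n,d}(\sigma)$ for all $\sigma\in\sym_d$. Since the two actions differ only by a central scalar, $\Psi_{n,d}^{s}(\cor\sym_d^{\opp})$ and $\Psi_{n,d}(\cor\sym_d^{\opp})$ are the \emph{same} subalgebra of $\End(V^{\tens d})$. An algebra isomorphism carries the centralizer of a subalgebra onto the centralizer of its image, and for any $A$ and subalgebra $B$ one has $\Cent_{A^{\opp}}(B^{\opp})=\Cent_A(B)$ as subspaces; combining these,
\[
\dim_\cor\Cent_{\End_{\cor Sp_n(\cor)}(V^{\tens d})}\!\bigl(\Psi_{n,d}(\cor\sym_d^{\opp})\bigr)=\dim_\cor\Cent_{\Br_d(-n)^{\opp}}(\cor\sym_d^{\opp})=\dim_\cor\Cent_{\Br_d(-n)}(\cor\sym_d).
\]
Together with the first display this gives the asserted equality.

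For the stability claim, suppose $\ch(\cor)=0$; then the hypotheses of the theorem hold for every even $n$, so the equality above is valid for all $n\ge 2d$. By the proposition asserting that $\dim_\cor\Cent_{\Br_d(\delta)}(\cor\sym_d)$ is independent of $\delta$ (its orbit-sum basis being purely combinatorial), the right-hand side $\dim_\cor\Cent_{\Br_d(-n)}(\cor\sym_d)$ does not vary with $n$; hence $\bigl(\dim_\cor P^d(M_n(\cor))^{Sp_n(\cor)}\bigr)_{n\ge 1}$ is constant --- a fortiori stable --- for $n\ge 2d$.

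I expect the only real obstacle to be the bookkeeping around the sign twist and the opposite algebras: one must check that replacing the place-permutation action of $\sym_d$ by its $\mathrm{sgn}$-twist alters neither the subalgebra it generates nor its centralizer, and keep the $(-)^{\opp}$'s straight. Everything else is a faithful transcription of the orthogonal argument through Theorem~\ref{thm:dim} and Theorem~\ref{sp}.
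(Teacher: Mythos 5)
Your proposal is correct and follows exactly the route the paper takes, namely combining Theorem \ref{thm:dim} with the symplectic Schur--Weyl duality of Theorem \ref{sp} and the $\delta$-independence of $\dim_\cor\Cent_{\Br_d(\delta)}(\cor\sym_d)$. Your explicit treatment of the sign twist $\Psi_{n,d}^{s}(\sigma)=\mathrm{sgn}(\sigma)\,\Psi_{n,d}(\sigma)$ --- checking that it leaves both the subalgebra $\Psi_{n,d}(\cor\sym_d^{\opp})$ and its centralizer unchanged --- is a point the paper passes over silently, and it is handled correctly.
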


\begin{remark}
Note that $Sp_n(\cor)$ is the group of linear transformations preserving the bilinear form $(\cdot ,\cdot)_s$ on $V$  which is represented by the matrix $J_n$ with respect to the basis we fixed. 
In \cite{DDH}, a different bilinear form $(\cdot ,\cdot)_{s'}$ is considered but it is isometric to $(\cdot ,\cdot)_s$ over an \emph{arbitrary} field. One may take an isometry given by a permutation of the elements in the basis. The algebra homomorphism $\Psi_{n, d}^{s}$ is the  conjugation  of that of \cite{DDH} by the isometry.
\end{remark}

\vskip 1em
\subsection{Centralizer of  $\sym_d$ in $\Br_d(\delta)$ and  disjoint union of directed cycles} \label{directed cycles}

A \emph{directed cycle} is a directed graph whose underlying undirected graph is a cycle. 
Note that  a cycle has at least one edge. 
Let $\LG_{d}^O$ be the subset of $\LG_d$ consisting of directed graphs in which each connected component is a directed cycle. 
 In other words, $\LG_d^O$ is the set of disjoint union of directed cycles whose total number of arrows is $d$.

\begin{lemma} \rm 
The map $\psi_d^{ }$ induces a bijection from $ \widetilde \Pi_{2d}$ to $\LG_{d}^O$.
\end{lemma}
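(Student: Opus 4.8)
The plan is to use the already-established bijection $\psi_d \colon \Pi_{2d} \to \LG_d$ and simply track what the restriction to $\widetilde\Pi_{2d}$ does. Recall that $\pi \in \widetilde\Pi_{2d}$ means every block of $\pi$ has size exactly $2$. Since the vertices of $\psi_d(\pi)$ correspond to the blocks of $\pi$, and each arrow labeled $i$ runs from the vertex containing $i$ to the vertex containing $i'$, the key observation is: the \emph{degree} (in the underlying undirected multigraph, counting an arrow once for its source and once for its target) of the vertex corresponding to a block $b$ of $\pi$ equals the number of indices $j \in [1,2d]$ such that $j \in b$, i.e. equals $|b|$. Indeed, each $j \in b$ with $1 \le j \le d$ contributes the tail of arrow $j$, and each $j \in b$ with $d+1 \le j \le 2d$ contributes the head of arrow $j-d$; distinct elements of $b$ give distinct arrow-endpoints at that vertex.

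So I would argue as follows. First, let $\pi \in \widetilde\Pi_{2d}$. Then every block of $\pi$ has size $2$, so by the degree computation above every vertex of $\psi_d(\pi)$ has degree exactly $2$ in the underlying undirected multigraph. A finite multigraph with no isolated vertices (which $\psi_d(\pi)$ has none, by definition of $\LG_d$) in which every vertex has degree $2$ is precisely a disjoint union of cycles; hence $\psi_d(\pi) \in \LG_d^O$. Conversely, if $G \in \LG_d^O$, then every vertex of $G$ has degree $2$, and running $\psi_d^{-1}$ as described in the excerpt produces a set partition $\pi = \psi_d^{-1}(G)$ whose block at each vertex $v$ consists of exactly the (two) endpoint-symbols meeting $v$; that block therefore has size $2$, so $\pi \in \widetilde\Pi_{2d}$. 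Thus $\psi_d$ carries $\widetilde\Pi_{2d}$ onto $\LG_d^O$; since $\psi_d$ is already known to be a bijection $\Pi_{2d}\to\LG_d$, its restriction to $\widetilde\Pi_{2d}$ is a bijection onto $\LG_d^O$.

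A couple of small points need care. One must be slightly careful about loops and multiple arrows between two vertices: a block $b = \{i, i'\}$ with $1 \le i \le d$ gives an arrow from the vertex containing $i$ to the vertex containing $i' = i+d$; if these are the same vertex (i.e. $i$ and $i+d$ lie in the same block — impossible here since the block is $\{i, i'\}$ and $i \ne i'$) one would get a loop, but actually for a size-$2$ block $\{i,j'\}$ one could have the arrow's source and target coincide only if $i$ and $j'$ are in the same block, which they are — so one should double-check whether loops (arrows from a vertex to itself) are permitted in $\LG_d^O$; the convention here is that a loop is a $1$-cycle, consistent with ``a cycle has at least one edge'' as stated just before the lemma, and a loop contributes $2$ to the degree of its vertex, so the degree bookkeeping is still consistent. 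The main (very mild) obstacle is thus just pinning down the graph-theoretic fact that ``no isolated vertices $+$ every vertex of degree $2$'' $\iff$ ``disjoint union of cycles'' with the paper's conventions on loops and multi-edges; once that is granted, the proof is immediate from the already-proven bijectivity of $\psi_d$ and the observation $\deg(\text{vertex of block } b) = |b|$.

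\begin{proof}
Recall from the construction that the vertices of $\psi_d(\pi)$ are in bijection with the blocks of $\pi$, and that for each $1 \le i \le d$ there is exactly one arrow labeled $i$, going from the vertex whose block contains $i$ to the vertex whose block contains $i'$. Hence, if $b$ is a block of $\pi$ and $v$ the corresponding vertex, the number of arrow-endpoints incident to $v$ (a tail for each $i \in b$ with $1 \le i \le d$, a head for each $i \in b$ with $d+1 \le i \le 2d$) is exactly $|b|$; that is, the degree of $v$ in the underlying undirected multigraph equals $|b|$.

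Now suppose $\pi \in \widetilde\Pi_{2d}$. Every block of $\pi$ has size $2$, so every vertex of $\psi_d(\pi)$ has degree $2$; since $\psi_d(\pi) \in \LG_d$ has no isolated vertices, it is a disjoint union of (directed) cycles, i.e. $\psi_d(\pi) \in \LG_d^O$. Conversely, let $G \in \LG_d^O$. Every vertex of $G$ has degree $2$. Applying the inverse construction, $\pi := \psi_d^{-1}(G)$ has one block for each vertex $v$ of $G$, consisting precisely of the symbols attached to the endpoints meeting $v$; by the degree count this block has size $2$. Hence $\pi \in \widetilde\Pi_{2d}$. Therefore $\psi_d$ maps $\widetilde\Pi_{2d}$ onto $\LG_d^O$, and since $\psi_d \colon \Pi_{2d} \to \LG_d$ is a bijection, its restriction to $\widetilde\Pi_{2d}$ is a bijection onto $\LG_d^O$.
\end{proof}
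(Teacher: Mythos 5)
Your proof is correct and rests on the same key observation as the paper's: for $\pi\in\widetilde\Pi_{2d}$ every vertex of $\psi_d(\pi)$ has degree two (the paper states this explicitly as the reason its traced-out path closes into a cycle). The only difference is presentational — the paper verifies ``degree two everywhere $\Rightarrow$ disjoint union of directed cycles'' by explicitly tracing each cycle via the $a_i,b_i$ bookkeeping, while you invoke it as a standard graph-theoretic fact; your degree-equals-block-size computation and the treatment of loops as $1$-cycles are both sound.
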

\begin{proof}
Let $\pi \in \widetilde \Pi_{2d}$. 
Fix $a_1 \in [1, 2d]$. 
Let $b_i$ be the other element in the block of $a_i$ and set $a_i:=b_{i-1}'$ unless $b'_{i-1} = a_j$ for some $j <  i$. 
Let $i_0=\text{min}\{ i \ge 2~ | ~ b_{i-1}'=a_j \text{ for some } j<i \}$. 
By the construction, $\{a_k,b_k\}$ is a block of $\pi$ for each $1\le k < i_0$. 
If $b_{i_0-1}'=a_j$ for some $j < i_0$, then $j=1$. 
Indeed, if $j>1$, then $b'_{i_0-1}=a_j=b_{j-1}'$ so that $i_0-1=j-1$, which is a contradiction. 
It follows that the full subgraph of $\widetilde \psi_d(\pi)$ with vertices $\{ a_1, b_1|a_2, b_2 | \cdots | a_{i_0-1}, b_{i_0-1} \}$  forms a directed cycle, because the degree of each vertex of $\widetilde \psi_d(\pi)$ equals two.  
Repeating the procedure, we conclude that $\psi_d(\pi)$ is a multidigraph in which every connected component is a directed cycle. 

It is clear that the restriction of $\psi_d^{-1}$ on $\LG_d^O$ is a map into $\widetilde \Pi_{2d}$.
\end{proof}

Let $\ULG_{d}^O$ be a subset of $\ULG_{d, d}$ consisting of the multidigraphs in which every connected component is a directed cycle. 
Then, since $\sigma$-conjugation on $ \psi_d(\pi)$ is permuting the arrow labels, $\sym_d$-conjugacy classes of $\widetilde \Pi_{2d}$ is in bijection with $\ULG_{d}^O$ under $\phi_d$.

Hence, by the same proof as in Corollary \ref{cor:dim=num.of.mutlidigraph}, we obtain the following theorem. 
\begin{theorem}  \rm 
The dimension of 
the centralizer $\Cent_{\Br_d(\delta)}(\cor\sym_d)$ is equal to $\# \, \ULG_d^O$, the number of  disjoint unions of directed cycles in which the total number of arrows is $d$.
\end{theorem}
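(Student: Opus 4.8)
The plan is to mirror the argument already established for the partition algebra in Corollary \ref{cor:dim=num.of.mutlidigraph}, but now working inside the Brauer subalgebra $\Br_d(\delta)$. The essential input has already been assembled in the preceding lemma and its corollary: the map $\psi_d$ restricts to a bijection $\widetilde\Pi_{2d} \isoto \LG_d^O$, and under $\phi_d$ the $\sym_d$-conjugacy classes of $\widetilde\Pi_{2d}$ are in bijection with $\ULG_d^O$, because conjugation by $\sigma\in\sym_d$ acts on $\psi_d(\pi)$ simply by permuting the arrow labels (Theorem \ref{thm:action}). So the content of the theorem is really just the translation of this combinatorial bijection into a dimension count for the centralizer.

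First I would recall that by the Proposition in the Brauer-algebra subsection, the orbit sums $\set{\sum_{\pi\in E(G)} D_\pi}{G\in\phi_d(\widetilde\Pi_{2d})}$ form a basis of $\Cent_{\Br_d(\delta)}(\cor\sym_d)$; the proof is verbatim the one in Corollary \ref{cor:dim=num.of.mutlidigraph}, using \eqref{eq:action_on_diagram} restricted to $\widetilde\Pi_{2d}$ (which is legitimate since $\widetilde\Pi_{2d}$ is stable under $\sym_d$-conjugation, as $\sigma*\pi*\sigma^{-1}$ has all blocks of size $2$ whenever $\pi$ does, and $\sigma,\sigma'\in\sym_d$ never create middle-row components so no $\delta$ powers appear). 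Hence $\dim_\cor \Cent_{\Br_d(\delta)}(\cor\sym_d) = \#\,\phi_d(\widetilde\Pi_{2d})$, the number of $\sym_d$-conjugacy classes in $\widetilde\Pi_{2d}$.

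Then I would invoke the displayed fact just above the theorem statement: under $\phi_d$, the set of $\sym_d$-conjugacy classes of $\widetilde\Pi_{2d}$ is in bijection with $\ULG_d^O$. Combining the two, $\dim_\cor \Cent_{\Br_d(\delta)}(\cor\sym_d) = \#\,\ULG_d^O$, which is exactly the number of disjoint unions of directed cycles with $d$ arrows in total. I would close by noting that this dimension is visibly independent of $\delta$, consistent with the Proposition already stated.

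Honestly there is no real obstacle here — every ingredient is already in place, and the only thing to be careful about is making sure the Corollary \ref{cor:dim=num.of.mutlidigraph} argument transfers cleanly to the subalgebra, i.e.\ that $\Br_d(\delta)$ is $\sym_d$-bistable and that the orbit sums one writes down actually lie in $\Br_d(\delta)$ rather than merely in $\Par_d(\delta)$. Both are immediate from $\widetilde\beta_d\subset\beta_d$ being a $\sym_d$-stable subset and from \eqref{eq:action_on_diagram}. So the proof is essentially a one-line citation: ``by the same proof as in Corollary \ref{cor:dim=num.of.mutlidigraph}, together with the bijection $\phi_d\colon \widetilde\Pi_{2d}/\sym_d \isoto \ULG_d^O$ established above.''
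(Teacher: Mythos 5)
Your proposal is correct and follows essentially the same route as the paper: the paper's proof is precisely the observation that the orbit sums $\sum_{\pi\in E(G)}D_\pi$ form a basis of $\Cent_{\Br_d(\delta)}(\cor\sym_d)$ by the argument of Corollary \ref{cor:dim=num.of.mutlidigraph} restricted to the $\sym_d$-stable subset $\widetilde\Pi_{2d}$, combined with the bijection $\phi_d$ between the $\sym_d$-conjugacy classes of $\widetilde\Pi_{2d}$ and $\ULG_d^O$ established in the preceding lemma. Your additional remarks (that conjugation preserves $\widetilde\Pi_{2d}$ and produces no powers of $\delta$) are accurate and merely make explicit what the paper leaves implicit.
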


\begin{corollary} \label{cor:orthsymp} \rm 
Let $\cor$ be an infinite field with $\ch (\cor) =0$ or $\ch(\cor) > d$. 
\begin{enumerate}
\item
If $\ch \cor \ne 2$ and $n \ge d$, then
$$\dim_\cor P^d(M_n(\cor))^{O(n, q')}= \# \, \ULG_d^O.$$

\item 
If $\ch \cor \ne 2$ and $n \ge d$, and  $\sqrt{-1}\in \cor$, then 
$$\dim_\cor P^d(M_n(\cor))^{O(n, q)}= \# \, \ULG_d^O.$$

\item
If  $n \ge 2d$, then
$$\dim_\cor P^d(M_n(\cor))^{Sp_n(\cor)}= \# \, \ULG_d^O.$$
\end{enumerate}
\end{corollary}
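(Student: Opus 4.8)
The plan is to assemble the chain of identifications established in the preceding subsections: each of the three parts is a Schur--Weyl-type dimension equality combined with the computation of $\dim_\cor\Cent_{\Br_d(\delta)}(\cor\sym_d)$ carried out just above.

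For part (1), under the standing hypothesis ($\ch(\cor)=0$ or $\ch(\cor)>d$) together with $\ch(\cor)\ne 2$, Theorem \ref{dimensionswdo} in the form for the bilinear form $(\cdot,\cdot)_{q'}$ already yields
$$\dim_\cor P^d(M_n(\cor))^{O(n,q')}=\dim_\cor\Cent_{\Br_d(n)}(\cor\sym_d)$$
for every $n\ge d$; it then suffices to apply the theorem on the Brauer centralizer just proved, with parameter $\delta=n$, which gives $\dim_\cor\Cent_{\Br_d(n)}(\cor\sym_d)=\#\,\ULG_d^O$. For part (2) I would run the identical argument but invoke instead the $(\cdot,\cdot)_{q}$ version of Theorem \ref{dimensionswdo}; this is where the extra hypothesis $\sqrt{-1}\in\cor$ is used, since it is needed for the isometry $\varphi_O$ of \eqref{eq:isometry} and the conjugated homomorphism $\Psi_{n,d}^{q}$ of \eqref{eq:PhiOconj} to be defined. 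For part (3) I would use Theorem \ref{dimensionswdsp}, which under $\ch(\cor)=0$ or $\ch(\cor)>d$ and $n\ge 2d$ gives $\dim_\cor P^d(M_n(\cor))^{Sp_n(\cor)}=\dim_\cor\Cent_{\Br_d(-n)}(\cor\sym_d)$, and then apply the Brauer-centralizer theorem with $\delta=-n$.

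The one structural observation making all three cases land on the same integer is the parameter-independence of $\dim_\cor\Cent_{\Br_d(\delta)}(\cor\sym_d)$: the orthogonal Schur--Weyl duality naturally produces $\Br_d(n)$ while the symplectic one produces $\Br_d(-n)$, yet their $\sym_d$-centralizers have equal dimension, namely $\#\,\ULG_d^O$. Beyond this there is no real obstacle; the statement is a bookkeeping corollary. The only point requiring care is matching the hypotheses across the inputs: Theorem \ref{thm:dim} (used inside Theorems \ref{dimensionswdo} and \ref{dimensionswdsp}) needs $d!\ne 0$ in $\cor$, i.e.\ $\ch(\cor)=0$ or $\ch(\cor)>d$; the orthogonal Schur--Weyl input of \cite{DH} needs $\ch(\cor)\ne 2$; and the reduction from $q'$ to $q$ needs $\sqrt{-1}\in\cor$. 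Carrying these conditions along, the three displayed equalities follow at once.
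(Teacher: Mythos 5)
Your proposal is correct and follows exactly the route the paper intends: the corollary is obtained by combining the two versions of Theorem \ref{dimensionswdo} (for $q'$ and $q$) and Theorem \ref{dimensionswdsp} with the theorem immediately preceding the corollary, which identifies $\dim_\cor\Cent_{\Br_d(\delta)}(\cor\sym_d)$ with $\#\,\ULG_d^O$ independently of $\delta$. Your bookkeeping of the hypotheses ($d!\ne 0$, $\ch(\cor)\ne 2$, $\sqrt{-1}\in\cor$) is also exactly as the paper requires.
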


\begin{remark}
If $\cor=\C$, then Corollary \ref{cor:orthsymp} recovers  the \emph{stable behavior of Hilbert series}  by Willenbring in the cases of orthogonal groups $O(n,q)=O_n(\C)$ and symplectic groups $Sp_n(\C)$. More precisely, (2) recovers  \cite[Theorem 4.1]{WillenbringO}, and (2)   together with (3) recover the equality
$$\lim HS(GL_n(\mathbb R)) = \lim HS(GL_m(\mathbb H)) $$
in \cite[Section 1.8]{Willenbring}.
Here the notation $HS(G_0) $ stands for the Hilbert series of the ring $S(\g)^K$, where $(G,K)$ is the \emph{symmetric pair} corresponding to the real form $G_0$, $\g$ is the  Lie algebra of the complex reductive group $G$, and $S(\g)$ denotes the symmetric algebra of $\g$.
When $G_0=GL_n(\mathbb R)$ (respectively, $G_0=GL_m(\mathbb H)$), the corresponding symmetric pair is $(GL_n(\C),O(n,q))$ (respectively, $(GL_{2m}(\C), Sp_{2m}(\C))$) so that  
$\g$ is isomorphic to $M_{n} (\C)$ (respectively, $M_{2m} (\C)$) and the ring $S(\g)^K$ is isomorphic to $P(M_n(\C))^{O(n,q)}$ (respectively, $P(M_{2m}(\C)^{Sp_{2m}(\C)}$).  
The stable limit $\lim HS(G_0)$ is defined as the formal power series whose coefficients are given by the limits of the coefficients of the Hilbert series $HS(G_0)$ as $n \to \infty $.
\end{remark}

\subsection{ Generalized Cycle Types of Brauer diagrams} \label{GCT}
In this subsection, we recall Shalile's description of the $\sigma$-conjugate classes of Brauer diagrams (called the \emph{generalized cycle types} in \cite{Shalile2011,Shalile}) and compare it with the description $\phi_d(\widetilde \Pi_{2d})$ by giving an explicit isomorphism between them.

\vskip 1em
\begin{definition} \rm(\cite[Definition 2.1]{Shalile2011}\rm) \\
\label{def:ULT}
For a diagram $D \in \beta_d$, we get a \emph{string}, sequence of letters $\{U, L, T\}$, as the following process:
\begin{enumerate}[(i)]
\item Starting from a dot in a diagram, move to the other dot which is connected to the original dot by an edge. 
\item If this edge is connected to a dot in the row other than the one in which we started, then mark this edge as ``$T$". \\
		 If the edge is connecting two dots in the same row, then mark it as ``$U$" if it was in the first row and ``$L$" if it was in the second row. 
\item From the dot we arrived by (i), move to the other dot which is in the same column. 
\item Continue (i)$\sim$(iii) until we reach the dot which we started. Here, we get a \emph{string} of letters, composed with $U$, $L$, or $T$. 
\end{enumerate}
From the given process, we get a multiset of strings in $ \{U, L, T\}$. 

From choosing another dot that was not counted in the above process and continuing (i)$\sim$(iv) until there are no remaining dots, one derives a multiset of strings. 

Note that one can have different multiset of strings by starting from a different dot. 
\end{definition}

\vskip 1em 
\begin{remark} \label{rem:GCTs} (Cf. \cite[Remark 2.4]{Shalile2011} )
It is clear that $U$ cannot come right after $U$ in a string of a diagram. 
If $T$ comes after $U$, the edge starts from the bottom to the top row. 
and hence  the next edge starts from the bottom row. 
Therefore, after $U$, another $U$ cannot appear again until $L$ comes after. 
Similarly, after $L$, another $L$ cannot appear again until $U$ comes after. 
\end{remark}

\vskip 1em
\begin{definition} \rm(\cite[Definition 2.5]{Shalile2011}\rm) \\
Let $C=l_1l_2 \cdots {l_{\alpha}}$ be a string in \{U, L, T\} for some $\alpha \in \mathbb{Z}_{\ge 1}$. 
Let the reversing $r$ and shifting $s$ be functions of switching the string of letters $C$ as 
\begin{center}
$r(C)=l_{\alpha}l_{(\alpha -1)} \cdots l_1$ \\
$s(C)=l_2l_3 \cdots l_{\alpha}l_1$
\end{center}
For strings $C_1=l_1 l_2  \cdots l_{\alpha}$ and $C_2=k_1  k_2 \cdots k_{\alpha} $ having the same length, define the relation $\sim$ as
\begin{center}
$C_1 \sim C_2 \quad \text{if and only if} \quad C_2=r^{t_1} s^{t_2}(C_1)$ for some $t_1, t_2 \in \mathbb{Z}_{\geqslant 0}$
\end{center}
Then,  $\sim$ is an equivalence relation because $sr=rs^{\alpha -1}, r^2=$id, and $s^{\alpha}=$id. 

For each $\pi \in \widetilde \Pi_{2d}$, we call the multiset of equivalence classes of strings obtained by the procedure  in Definition \ref{def:ULT}  the  \emph{generalized cycle type (GCT, in short) of $\pi$}. 

\end{definition}


\vskip 1em
\begin{example}
Consider when $d=4$. Let $\delta \in \cor$ and two diagrams $D_1, D_2 \in \Br_d(\delta)$ be 
\begin{center}
${\beginpicture 
\setcoordinatesystem units <1.35cm,0.4cm>
\setplotarea x from 0 to 10, y from -1 to 4
\put{$D_1:= $} at 0 1.5
\put{$\bullet$} at  1 0
\put{$\bullet$} at  1 3
\put{$\bullet$} at  2 0
\put{$\bullet$} at  2 3
\put{$\bullet$} at  3 0
\put{$\bullet$} at  3 3
\put{$\bullet$} at  4 0
\put{$\bullet$} at  4 3
\put{,} at 4.3 1.5
\put{$D_2:=$} at 5 1.5
\put{$\bullet$} at  6 0
\put{$\bullet$} at  6 3
\put{$\bullet$} at  7 0
\put{$\bullet$} at  7 3
\put{$\bullet$} at  8 0
\put{$\bullet$} at  8 3
\put{$\bullet$} at  9 0
\put{$\bullet$} at  9 3
\put{.} at 9.3 1.5
\put{1} at 1 -1 
\put{2} at 2 -1
\put{3} at 3 -1
\put{4} at 4 -1
\put{$1'$} at 1 4
\put{$2'$} at 2 4
\put{$3'$} at 3 4
\put{$4'$} at 4 4
\put{1} at 6 -1
\put{2} at 7 -1
\put{3} at 8 -1
\put{4} at 9 -1
\put{$1'$} at 6 4
\put{$2'$} at 7 4
\put{$3'$} at 8 4
\put{$4'$} at 9 4

\plot 1 0 4 3 /
\plot 2 3 2 0 /
\plot 9 3 9 0 /
\plot 6 3 7 0 /
\setquadratic
\plot 1 3 2 2.5 3 3 /
\plot 3 0 3.5 0.5 4 0 /
\plot 7 3 7.5 2.5 8 3 /
\plot 6 0 7 0.5 8 0 /
\endpicture}$
\end{center}
Starting from $1'$ of $D_1$, we proceed as $1' \xrightarrow{ U } 3' \dashrightarrow 3 \xrightarrow{L} 4 \dashrightarrow 4' \xrightarrow{T} 1 \dashrightarrow 1$. 
We did not pass through 2 in the previous process, so starting again from $2'$ to obtain $2' \xrightarrow{T} 2 \dashrightarrow 2'$. 
The resulting generalized cycle type of $D_1$ is $\{ULT, T\}$. 

Similarly, start from $1'$ of $D_2$, we have $1' \xrightarrow{T} 2 \dashrightarrow 2' \xrightarrow{U} 3' \dashrightarrow 3 \xrightarrow{L} 1 \dashrightarrow 1'$,
 and start again from $4$ to obtain  $4 \xrightarrow{T} 4' \dashrightarrow 4$. 
Then the generalized cycle type of $D_2$ is $\{TUL, T\}$. 

Since $ULT=s^{-1}(TUL)$, $D_1$ and $D_2$ have the same generalized cycle type.
\end{example}

\begin{definition} \rm \label{epsilond}
Let $\GCT_d$ be the set of multisets of equivalence classes of strings in $U, L, T$ of length $d$ such that in each string of the multiset
\begin{enumerate}[(i)]
\item no string of the form $UT^iU$ or $LT^iL$  $(i \ge 0)$ appears in any representative, and
\item the number of occurrence of $U$ and that of $L$ are the same.
\end{enumerate}
\end{definition}
By Remark \ref{rem:GCTs}, every generalized cycle type  of an element  $\pi \in \widetilde \Pi_{2d}$ belongs to the set $\GCT_d$.

Define $\rho_d^{ } : \ULG_d^O \longrightarrow \GCT_d^{ }$ as the following:   
\begin{enumerate}[(i)]
\item For a digraph in $\ULG_d^O$, label each vertex as $U$ if $\rightarrow \bullet \leftarrow$, $L$ if $\leftarrow \bullet \rightarrow$, and $T$ if $\rightarrow \bullet \rightarrow$ or $\leftarrow \bullet \leftarrow$.
\item Start from a vertex and make a string of letters of $U$, $L$, $T$'s by following the edges in one orientation. 
\end{enumerate}

Let $f : \LG_{d}^O \to \ULG_d^O$ be the function of  forgetting the labels on arrows and set $$\overline{\rho_d^{ }}:=\rho_d \circ f.$$

\begin{remark} \label{rem:ShalileWillenbring}
One can check easily that the map $\overline{\rho_d^{ }} \circ  \psi_d$ is nothing but the map in Definition \ref{def:ULT} which Shalile defined. 
Also, note that ${\psi_d}|_{\widetilde \Pi_{2d}}$ is the map Willenbring considered in \cite[Section 3]{WillenbringO} under the identification $\widetilde \beta_d$ with the set of fixed point free involutions on $[1,2d]$.
\end{remark}

Hence we obtain a commutative diagram  below: \\
\centerline{
\xymatrix{
\widetilde \Pi_{2d} \ar[r]^{ \psi_d^{ }} \ar[rd]_{\text{Shalile}} & \LG_{d}^O \ar[r]^{f} \ar[d]^{\overline{\rho_d^{ }}} & \ULG_{d}^O \ar[ld]^{\rho_d^{ }} \\
 & \GCT_d^{ } & 
}}

\vskip 1em
We define $\nu_d : \GCT_d \to \ULG_{d,d}^O$ as follows: \\
It is enough to define  $\nu_d(C)$ for an element in $\GCT_d$ of the form $C=l_1l_2 \cdots l_d$. 
\begin{enumerate}
\item If $l_i=T$ for all $1 \le i \le d$, then $\nu_d(C)$ is the directed cycle in Figure \ref{fig:cycleTT}.
\begin{figure}[h]
\begin{tikzpicture}[node distance =1 cm and 1 cm]
\node (a) [point];
\node (b) [below right= of a,yshift=0.5cm, point];
\node (c) [below = of b, point]; 
\node (d) [below = of a,yshift=-1.3cm] {$\cdots$};
\node (e) [left = of c,xshift=-1cm, point];
\node (f) [left = of b,xshift=-1cm, point];

\path (a) edge (b); 
\path (b) edge (c);
\path (c) edge (d); 
\path (d) edge (e); 
\path (e) edge (f);
\path (f) edge (a); 
\end{tikzpicture}
\caption{Directed graph of $\nu_d(TT\cdots T)$}
\label{fig:cycleTT}
\end{figure}
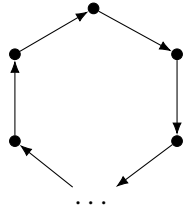
\item We may assume that $l_1=U$ by Definition \ref{epsilond} (ii). 
Construct a planar directed graph $G_i$ inductively as follows:  
\begin{enumerate}
\item Let $G_1:=\bullet^1 \longleftarrow \bullet^2$. 
\item For $2 \le i \le d-1$, 
\begin{enumerate}
\item If $l_i=U$, then $G_i$ is the graph obtained from $G_{i-1}$ by adding one incoming arrow to the rightmost vertex of $G_{i-1}$. 
That is, $G_i=G_{i-1} \longleftarrow \bullet^{i+1}$ where $\bullet^{i}$ is the most recently added vertex in $G_{i-1}$. 

\item If $l_i=L$, then $G_i$ is the graph obtained from $G_{i-1}$ by adding one outgoing arrow to the rightmost vertex of $G_{i-1}$. That is, $G_i=G_{i-1} \longrightarrow \bullet^{i+1}$.

\item If $l_i=T$, then $G_i$ is the graph obtained from $G_{i-1}$ by adding one arrow to the rightmost vertex of $G_{i-1}$ with the same direction as the adjacent one. 
\end{enumerate}

\item For $i=d$, $\nu_d(C)=G_d$ is the graph obtained from $G_{d-1}$ adding an arrow connecting the two extreme vertices from $\bullet^d$ to $\bullet^1$. 

\end{enumerate}
\end{enumerate}

\vskip 1em

\begin{theorem}
The maps $\rho_d$ and $\nu_d$ are inverses to each other. 
\end{theorem}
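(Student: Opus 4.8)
The plan is to show that each of $\rho_d\circ\nu_d$ and $\nu_d\circ\rho_d$ is the identity by first reducing to a single connected component and then isolating a simple normal form for a directed cycle. Both maps act componentwise: $\rho_d$ sends a disjoint union of directed cycles to the multiset formed by the cyclic vertex-type sequences of its components, while $\nu_d$ of a multiset of string classes is the disjoint union of $\nu_d$ of each class. Since a directed cycle with $\ell$ arrows has exactly $\ell$ vertices and produces a string of length $\ell$, the condition ``$d$ arrows'' corresponds to ``strings of total length $d$'', so it is enough to treat one directed cycle, respectively one equivalence class of a string of some length $\ell$, at a time.

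Next I would establish the normal form. Fixing a traversal $v_1,e_1,v_2,\dots ,e_\ell ,v_1$ of a directed cycle, let $\epsilon_i\in\{+,-\}$ record whether $e_i$ points along the traversal. A direct inspection shows that $v_i$ is of type $T$ if $\epsilon_{i-1}=\epsilon_i$, of type $U$ if $(\epsilon_{i-1},\epsilon_i)=(+,-)$, and of type $L$ if $(\epsilon_{i-1},\epsilon_i)=(-,+)$. Hence the non-$T$ letters of the vertex-type sequence alternate $U,L,U,L,\dots$ cyclically, so this sequence satisfies conditions (i) and (ii) of Definition \ref{epsilond} and $\rho_d$ really lands in $\GCT_d$; moreover $\epsilon_i$ is determined by $\epsilon_{i-1}$ together with the type of $v_i$, so as soon as one vertex is non-$T$ the entire sign sequence, hence the directed cycle up to isomorphism of digraphs, is determined by the cyclic vertex-type sequence (the all-$T$ cycles being precisely the consistently oriented ones of Figure \ref{fig:cycleTT}, again unique up to isomorphism). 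In particular $\rho_d$ is independent of the chosen start vertex and orientation of the traversal, which change the string only by a shift $s$ or a reversal $r$, and two directed cycles with equal $\rho_d$-image are isomorphic.

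Finally I would show that $\nu_d(C)$ realizes the vertex-type sequence $C$. Take the representative of the class with $l_1=U$ (the all-$T$ class being Figure \ref{fig:cycleTT}), build $G_1\subset\cdots\subset G_{\ell-1}$ and close up to $G_\ell=\nu_d(C)$, keeping track of whether each edge $e_i$ is drawn pointing left or right: $e_1$ points left in $G_1=\bullet^1\leftarrow\bullet^2$, and at step $i$ the recipe draws $e_i$ pointing left if $l_i=U$, right if $l_i=L$, and the same way as $e_{i-1}$ if $l_i=T$. One then checks that, for $2\le i\le\ell-1$, the vertex $\bullet^i$ becomes a sink, a source, or a through-vertex according as the pair (direction of $e_{i-1}$, direction of $e_i$) is (right, left), (left, right), or a repeated value, so $\bullet^i$ acquires the type $l_i$ exactly when $e_{i-1}$ points right whenever $l_i=U$ and points left whenever $l_i=L$. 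But the drawing direction of $e_{i-1}$ encodes the most recent non-$T$ letter among $l_1,\dots ,l_{i-1}$, pointing left iff that letter is $U$; so the requirement is precisely that no pattern $UT^{j}U$ or $LT^{j}L$ occurs, i.e. condition (i) of Definition \ref{epsilond} read cyclically. The same cyclic reading disposes of the closing edge $\bullet^\ell\to\bullet^1$ and the extreme vertices $\bullet^\ell ,\bullet^1$; for instance $l_\ell\ne U$ because $l_\ell l_1=l_\ell U$ would be a forbidden $UT^0U$. Therefore $\nu_d(C)$ is a directed cycle on $\ell$ vertices whose cyclic vertex-type sequence is $l_1\cdots l_\ell=C$, which gives at once $\rho_d\circ\nu_d=\mathrm{id}$, the well-definedness of $\nu_d$ on $\sim$-classes (via the normal-form lemma of the previous paragraph), and, combined with the injectivity there, $\nu_d\circ\rho_d=\mathrm{id}$. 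The one place demanding care, and the main obstacle I anticipate, is exactly this wrap-around bookkeeping in the inductive construction, where the cyclic use of conditions (i) and (ii) is essential and where an off-by-one slip would be easy to overlook.
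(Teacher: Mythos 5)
Your argument is correct and follows essentially the same route as the paper: you analyze the local configuration at each vertex $\bullet^i$ produced by the inductive construction of $\nu_d$ (sink for $U$, source for $L$, through-vertex for $T$), using condition (i) of Definition \ref{epsilond} to control the direction of $e_{i-1}$ via the most recent non-$T$ letter, and the cyclic reading to handle the closing edge. The only difference is that you make explicit what the paper leaves as "straightforward" --- the well-definedness of $\rho_d$ under shift and reversal, its injectivity via the sign-sequence normal form, and the wrap-around bookkeeping --- which is a faithful completion rather than a different proof.
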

\begin{proof}
The followings are consequences of the definition of $\nu_d$.
\begin{enumerate}
\item
In the case of (b)-(i), since $l_1=U$, there exists $j < i$ such that $l_j=L$ and $l_{j+1}=l_{j+2}=\cdots =l_{i-1}=T$.
Then, $G_{i-1}=G_{j-1} \longrightarrow \bullet^{j+1} \longrightarrow \bullet^{j+2} \longrightarrow \cdots \longrightarrow \bullet^{i}$. 
So, the local configuration in this step is always $\bullet^{i-1} \longrightarrow \bullet^{i} \longleftarrow \bullet^{i+1}$. 
\item
In the case of (b)-(ii), since $U$ has to appear first before $L$, there exists $j < i$ such that $l_j=U$ and $l_{j+1}=l_{j+2}=\cdots =l_{i-1}=T$. 
Then, $G_{i-1}=G_{j-1} \longleftarrow \bullet^{j+1} \longleftarrow \bullet^{j+2} \longleftarrow \cdots \longleftarrow \bullet^{i}$. 
So, the local configuration in this step is always $\bullet^{i-1} \longleftarrow \bullet^{i} \longrightarrow \bullet^{i+1}$. 
\item
In the case of (c), $l_d$ is either $L$ or $T$ by Definition \ref{epsilond} (i) and the assumption  $l_1=U$.  
\end{enumerate}
By these local characterizations of $\nu_d(C)$,  it is straightforward to see that the compositions $\rho_d$ and $\nu_d$  are the identities. 
\end{proof}

\begin{corollary} \rm
The set of generalized cycle types of elements in  $\widetilde \Pi_{2d}$ is equal to $\GCT_d$. 
\end{corollary}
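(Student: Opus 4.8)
The statement to prove is the final Corollary: the set of generalized cycle types of elements in $\widetilde\Pi_{2d}$ equals $\GCT_d$. The plan is to assemble this from the pieces already established in the excerpt. First I would recall that $\psi_d$ restricts to a bijection $\widetilde\Pi_{2d}\to\LG_d^O$ (the Lemma just before the statement about $\ULG_d^O$), and that $f\colon\LG_d^O\to\ULG_d^O$ is surjective; hence the map $\overline{\rho_d}\circ\psi_d = \rho_d\circ f\circ\psi_d$ has image exactly $\rho_d(\ULG_d^O)$. By Remark \ref{rem:ShalileWillenbring}, $\overline{\rho_d}\circ\psi_d$ is precisely Shalile's map from Definition \ref{def:ULT} sending a Brauer diagram to its generalized cycle type. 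So the set of generalized cycle types of elements of $\widetilde\Pi_{2d}$ is literally $\rho_d(\ULG_d^O)$.

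The second ingredient is the Theorem immediately preceding, which says $\rho_d$ and $\nu_d$ are mutually inverse bijections between $\ULG_d^O$ and $\GCT_d$ — here one must check that the codomain written as $\ULG_{d,d}^O$ for $\nu_d$ and the domain $\ULG_d^O$ for $\rho_d$ coincide, which they do since $\ULG_d^O\subseteq\ULG_{d,d}$ by construction (a disjoint union of directed cycles with $d$ arrows has exactly $d$ non-isolated vertices). In particular $\rho_d$ is surjective onto $\GCT_d$, so $\rho_d(\ULG_d^O)=\GCT_d$.

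Combining the two observations: the set of generalized cycle types of $\widetilde\Pi_{2d}$ equals $\rho_d(\ULG_d^O)=\GCT_d$, which is the claim. I would also note, for the ``$\subseteq$'' direction, that it was already remarked after Definition \ref{epsilond} (via Remark \ref{rem:GCTs}) that every generalized cycle type of an element of $\widetilde\Pi_{2d}$ lies in $\GCT_d$; the content of the corollary is the reverse inclusion, i.e. that every element of $\GCT_d$ actually arises, and this is exactly the surjectivity of $\rho_d$ furnished by the inverse $\nu_d$.

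The main (and essentially only) obstacle is bookkeeping: making sure the various maps compose as claimed and that the identifications among $\ULG_d^O$, $\ULG_{d,d}^O$, and the image of $f\circ\psi_d$ are all literally equalities rather than just bijections, and that the commutative triangle displayed before the definition of $\nu_d$ is invoked correctly. Once those identifications are pinned down, the corollary is an immediate formal consequence of the preceding Theorem, so the proof is short — essentially ``apply the commutative diagram and the fact that $\rho_d$ is a bijection.''
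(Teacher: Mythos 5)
Your argument is correct and is essentially the paper's own reasoning: the paper states this corollary without a separate proof, as an immediate consequence of the commutative triangle (Shalile's map equals $\rho_d\circ f\circ\psi_d$, with $\psi_d$ bijective onto $\LG_d^O$ and $f$ surjective onto $\ULG_d^O$) together with the preceding theorem that $\rho_d$ and $\nu_d$ are mutually inverse. Your additional bookkeeping about $\ULG_d^O\subseteq\ULG_{d,d}$ and the one-sided inclusion from Remark \ref{rem:GCTs} is accurate and only makes the implicit argument explicit.
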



\providecommand{\bysame}{\leavevmode\hbox to3em{\hrulefill}\thinspace}
\providecommand{\MR}{\relax\ifhmode\unskip\space\fi MR }
\providecommand{\MRhref}[2]{%
  \href{http://www.ams.org/mathscinet-getitem?mr=#1}{#2}
}
\providecommand{\href}[2]{#2}

\end{document}